\documentclass[amssymb,amsfonts,refcheck,11pt,verbatim,righttag]{amsart}
\usepackage{amssymb}
\textheight 8.0in
\textwidth 6.00in
\oddsidemargin 0.25in
\evensidemargin 0.25in
\parskip 1.0ex

\def\colon{{:}\;}

\def\R {\Bbb R}

\newcommand{\beq}{\begin{equation}}
\newcommand{\eeq}{\end{equation}}

\newcommand{\ben}{\begin{eqnarray}}
\newcommand{\een}{\end{eqnarray}}

\newcommand{\bet}{\begin{eqnarray*}}
\newcommand{\eet}{\end{eqnarray*}}

\newtheorem{thm}{Theorem}[section]
\newtheorem{lem}[thm]{Lemma}
\newtheorem{pro}[thm]{Proposition}
\newtheorem{cor}[thm]{Corollary}
\newtheorem{ex}[thm]{Example}
\newtheorem{de}[thm]{Definition}
\newtheorem{re}[thm]{Remark}

\def\R {\Bbb R}
\def\N {\Bbb N}
\def\M {{\mathcal M}}

\def\E {{\mathcal E}}
\def\a{{\bf a}}
\def\b{{\bf b}}
\def\bt{{\bf t}}

\def\I{{\mathcal I}}

\def\bq{{\bf q}}

\def\C{{\mathcal C}}
\def\htop{h_{\rm top}}
\pagestyle{plain}
\parskip 1.0ex
\theoremstyle{plain}

\begin{document}
\baselineskip 14pt

\title
{Lyapunov spectrum of asymptotically sub-additive  potentials}

\author{De-Jun FENG}
\address{
Department of Mathematics\\
The Chinese University of Hong Kong\\
Shatin,  Hong Kong\\
}
\email{djfeng@math.cuhk.edu.hk}

\author{Wen Huang}
\address{
Department of Mathematics\\
 University of Science and Technology
of
China\\
 Hefei 230026, Anhui,  P. R. China\\
}
\email{wenh@mail.ustc.edu.cn }
\keywords{Lyapunov exponents,
 Multifractal analysis, Variational principle}
 \thanks {
2000 {\it Mathematics Subject Classification}:  37D35;  37C45}

\date{}

\begin{abstract}
For general asymptotically sub-additive potentials (resp. asymptotically additive potentials) on general topological dynamical systems,
we establish  some variational relations between the topological entropy of the level sets of
Lyapunov exponents, measure-theoretic entropies and  topological pressures
in this general situation.  Most of our results are obtained without
 the assumption of the existence of unique equilibrium measures or the differentiability of pressure functions. Some examples are constructed to illustrate the irregularity and the complexity of multifractal behaviors in the sub-additive
case and in the case that the entropy map that is not upper-semi continuous.
\end{abstract}

\maketitle
\setcounter{section}{0}
\section{Introduction}
\label{S-1}
\setcounter{equation}{0}
The present paper is devoted to the study of  the multifractal behavior of  Lyapunov exponents of asymptotically sub-additive potentials.
This  is mainly motivated by the recent works on the Lyapunov exponents of matrix products
\cite{FeLa02, Fen03, Fen08} and the Lyapunov exponents of differential maps on nonconformal repellers \cite{BaGe06}.

Before formulating  our results, we first give some notation and backgrounds.  We call $(X,T)$ a {\it topological dynamical system}
(for short TDS) if $X$ is
a compact metric space  and $T$ is a continuous map from $X$ to $X$.
 A sequence   $\Phi=\{\log \phi_n\}_{n=1}^\infty$ of functions on $X$ is said to be a {\it sub-additive potential} if
each $\phi_n$ is a continuous nonnegative-valued function on $X$ such that
\begin{equation}
\label{e-1.0}
 0\leq \phi_{n+m}(x)\leq \phi_n(x)\phi_m(T^nx),\qquad \forall\;
x\in X, \; m,n\in \N.
\end{equation}
More generally,   $\Phi=\{\log \phi_n\}_{n=1}^\infty$ is said to be an {\it asymptotically sub-additive potential} if for any $\epsilon>0$, there exists a sub-additive potential $\Psi=\{\log \psi_n\}_{n=1}^\infty$ on $X$ such that
$$
\limsup_{n\to \infty}\frac{1}{n}\sup_{x\in X} |\log \phi_n(x)-\log \psi_n(x)|\leq \epsilon,
$$
where we take the convention $\log 0-\log 0=0$.
Furthermore $\Phi$ is called an {\it asymptotically additive potential}
if both $\Phi$ and $-\Phi$ are asymptotically sub-additive, where $-\Phi$ denotes $\{\log (1/\phi_n)\}_{n=1}^\infty$.
In particular,  $\Phi$ is called {\it additive} if each $\phi_n$ is a continuous positive-valued function so that
$\phi_{n+m}(x)=\phi_n(x)\phi_m(T^nx)$ for all $x\in X$ and $m,n\in \N$; in this case, there is a
continuous real function $g$
on $X$ such that $\phi_n(x)=\exp(\sum_{i=0}^{n-1}g(T^ix))$ for each $n$.

Let $\Phi=\{\log \phi_n\}_{n=1}^\infty$ be an  asymptotically
sub-additive potential on $X$. For any $x\in X$, we define
\begin{equation}
\label{e-1.1} \lambda_{\Phi}(x)=\lim_{n\to \infty}\frac{\log
\phi_n(x)}{n}
\end{equation}
and call it the {\it Lyapunov exponent of $\Phi$ at $x$}, provided
that  the limit exists. Otherwise we use
$\overline{\lambda}_\Phi(x)$ and ${\underline{\lambda}}_\Phi(x)$ to
denote the upper and lower limits respectively.   It can be derived
from  Kingman's sub-additive ergodic theorem (cf. \cite{Wal-book},
p. 231) that, for any $\mu\in \E(X,T)$,
$$
\lambda_\Phi(x)=\Phi_*(\mu) \qquad \mbox{ for } \mu \mbox{-a.e. }
x\in X,
$$
where  $\E(X,T)$ denotes the space of ergodic $T$-invariant
Borel probability  measures on $X$ and
\begin{equation} \label{e-1.2} \Phi_*(\mu):=\lim_{n\to \infty}\int
\frac{\log \phi_n(x)}{n}\; d\mu(x).
\end{equation}
 This limit always exists and takes values in $\R\cup \{-\infty\}$. (For details, see Proposition \ref{pro-1195}.)

In this paper we are mainly concerned with the distribution of the Lyapunov exponents of $\Phi$.
More precisely, for any $\alpha\in \R$,  define
\begin{equation}
\label{e-1.4} E_\Phi(\alpha)=\{x\in X:\; \lambda_\Phi(x)=\alpha\},
\end{equation}
which is called the {\it $\alpha$-level set} of $\lambda_\Phi$.
 We shall study the topological entropy $\htop(T, E_\Phi(\alpha))$ of
 $E_\Phi(\alpha)$ when $\alpha$ varies (here we are using the notion of topological entropy for arbitrary subsets of a compact space,
 introduced by Bowen in \cite{Bow73}; see Section \ref{S-2.1}).
  This is a general concept of multifractal analysis proposed by Barreira, Pesin and Schmeling
  \cite{BPS97}, and it plays an important role in the dimension theory of dynamical systems \cite{Pes-book, Bar-book}. For convenience we call $\htop(T, E_\Phi(\alpha))$, as a function of $\alpha$, the {\it Lyapunov spectrum} of $\Phi$.

A key ingredient in the above study is the  topological pressure of $\Phi$. To introduce this concept, let $X$ be endowed with the metric $d$.
For any $n\in \N$, define a new  metric $d_n$ on $X$ by
\begin{equation}
\label{e-1.4'}
 d_n(x, y) =\max\left\{d\left(T^k(x), T^k(y)\right):\;k =0, \ldots,n-1 \right\}.
 \end{equation}
 For any $\epsilon>0$, a set $E\subseteq X$ is
said to be a {\it $(n,\epsilon)$-separated subset of $X$}  if
$d_n(x,y)>\epsilon$ for any two different points $x,y\in E$. For $\Phi=\{\log \phi_n\}_{n=1}^\infty$, we define
$$
P_n(T, \Phi,\epsilon) =\sup\left\{\sum_{x\in E}\phi_n(x):\; E \mbox{ is
a } (n,\epsilon) \mbox{-separated subset of } X\right\}.
$$
It is clear that $P_n(T, \Phi,\epsilon)$ is a decreasing function of
$\epsilon$. Define
$$
P(T,\Phi,\epsilon)=\limsup_{n\to \infty}\frac{1}{n}\log
P_n(T,\Phi,\epsilon)
$$
and $P(T,\Phi)=\lim_{\epsilon\to 0}P(T,\Phi,\epsilon)$. We call  $P(T,\Phi)$
 the {\it topological pressure of $\Phi$ with respect to
$T$} or, simply, the {\it topological pressure of $\Phi$}. If $\Phi$ is additive, $P(T,\Phi)$ recovers
the classical (additive) topological pressure introduced by Ruelle and Walters (cf. \cite[Chapter 9]{Wal-book}).

Let us return back to the study of the Lyapunov spectrum.
When $\Phi=\{\sum_{i=0}^{n-1}f\circ T^i\}_{n=1}^\infty$ is an additive potential, the Lyapunov exponent $\lambda_\Phi$ is just equal to
the Birkhoff average of $f$. In this case, the topological entropy
(or the Hausdorff dimension) of the level sets of Birkhoff averages  has been extensively studied in the recent two
decades (see, e.g.,
\cite{BMP92, BPS97, Oli99, BaSc00, FaFe00, FFW01, Kes01, FLW02, BSS02, Ols03, TaVe03, Fen03,
 CKS05, FLP08, BaMe07, BaMe08, FeSh08} and references therein).
It is well known (see, e.g.  \cite{FFW01, Fen03, Oli99}) that when $(X,T)$  is a transitive subshift of finite type and
$\Phi$ is an additive potential, then
\begin{equation}
\label{e-0404}
E_\Phi(\alpha)\neq \emptyset\Longleftrightarrow \alpha\in \Omega:=\{\Phi_*(\mu): \; \mu\in \M(X,T)\},
\end{equation} where $\M(X,T)$ denotes the space of $T$-invariant
Borel probability measures on $X$ and \begin{equation}
\label{e-i1}
\begin{split}
\htop(T,E_\Phi(\alpha))&=\sup\{h_\mu(T):\; \mu\in \M(X,T) \mbox{ with } \Phi_*(\mu)=\alpha\}\\
\mbox{}&=\inf\{P_\Phi(q)-\alpha q:\; q\in \R\}, \qquad \forall\; \alpha\in \Omega.
\end{split}
\end{equation}
where $h_\mu(T)$ denotes the measure-theoretic entropy of $\mu$, $P_\Phi(q):=P(T,q\Phi)$ and
$q\Phi$ denotes the potential $\{q\log \phi_n\}_{n=1}^\infty$.   The first variational relation in (\ref{e-i1})
has been extended  to any TDS satisfying the specification property \cite{TaVe03}.

Motivated by the study of the multifractal formalism associated to certain iterated function systems with overlaps,
 the Lyapunov spectrum of certain special sub-additive potentials $\Phi=\{\log \phi_n\}_{n=1}^\infty$ on subshifts of finite type  have been studied
 in \cite{FeLa02, Fen03, Fen08},  in which $\phi_n(x)=\|\prod_{i=0}^{n-1}M(T^ix)\|$, where $M$ is a continuous function on $X$ taking values in the set of
$d\times d$ matrices, and $\|\cdot\|$ denotes the operator norm. It
is known that in this general situation, (\ref{e-0404}) and
(\ref{e-i1}) may both fail. The following is an example taken from
\cite{Fen08}.

\noindent{\bf Example 1.1} {\it  Let $(X,T)$ be the  one-sided full shift over the alphabet $\{1,2,3,4\}$. Let
 $M(x)$ be  a matrix function on $X$  defined as
$M(x)=M_{x_1}$ for $x=(x_j)_{j=1}^\infty$, where $M_i$ ($1\leq
i\leq 4$) are  diagonal $4\times 4$ matrices given by
$$M_1=M_2=\mbox{\rm diag}(1,2,0,0), \ \ M_3=\mbox{\rm diag}(1,0,3,0),\
M_4=\mbox{\rm diag}(1,0,0,4).$$ It is easily checked that
\[
P_\Phi(q)=\left\{
\begin{array}{ll}
q\log 4, & \mbox{ if }q\geq 1\\
\log 4, & \mbox{ otherwise}
\end{array}
\right.
\]
and \begin{equation*}
\begin{split}
\{\alpha\in \R:\;  E_\Phi(\alpha)\neq \emptyset\}&=\{0,\log 2,\log 3,\log4\}\\
&\subsetneqq [0,\log 4]= \{\alpha\in \R:\; \Phi_*(\mu)
=\alpha\mbox{ for some }\mu\in \M(X,T)\}.
\end{split}
\end{equation*}
Furthermore, $E_\Phi(\log 3)$ is a singleton and thus
$$
\htop(T, E_\Phi(\log 3))=0<\log 4-{\log 3}=\inf
_{q\in \R}\{-q\log 3+P_\Phi(q)\}.
$$
}

We remark that under some additional assumptions
(e.g., positiveness or certain irreducibility) for the matrix function $M$,  (\ref{e-0404}) and (\ref{e-i1})
still hold \cite{FeLa02, Fen03, Fen08}. A natural question arises whether there exist some positive results without
any additional assumptions. This is one of the original motivations of this paper.

Indeed in this paper, we study the Lyapunov spectrum of {\it general} asymptotically sub-additive potentials and
asymptotically additive potentials on {\it general} TDS.
Under this setting, the multifractal behavior may be quite irregular. For instance, we can construct a TDS $(X, T)$ and
an additive potential $\Phi$ on $X$ such that
$$\htop(T,E_\Phi(\alpha))< \inf\{P_\Phi(q)-\alpha q:\; q\in \R\}\qquad \forall\; \alpha\in \Omega:=\{\Phi_*(\mu): \; \mu\in \M(X,T)\}.$$
  (See Example \ref{re-1.2}).
Nevertheless, we still have some positive results regarding the Lyapunov spectrum and its variational
relations to measure-theoretic entropies and topological pressures. Some more properties are obtained when the corresponding TDS
satisfies further  assumptions (e.g., upper semi-continuity of the entropy map).

To formulate our results,  for an asymptotically sub-additive potential $\Phi=\{\log \phi_n\}$ on a general TDS $(X,T)$,
 we define
\begin{equation}
\label{e-1.5} \overline{\beta}(\Phi)=\lim_{n\to \infty}\frac{1}{n}\log
\sup_{x\in X}\phi_n(x).
\end{equation}
The limit exists and takes values in  $\R\cup
\{-\infty\}$ (see Lemma \ref{lem-mf-sa}). However if  $\overline{\beta}(\Phi)=-\infty$, it is easy to see
that  for all $x\in X$, $\lambda_\Phi(x)=-\infty$. To avoid
trivialities we shall always assume that $\overline{\beta}(\Phi)>-\infty$.  For any $q>0$, let $q\Phi$ denote the sequence
$\left\{q\log \phi_n\right\}_{i=1}^\infty$ (which  clearly is
asymptotically sub-additive) and write $$P_\Phi(q)=P\left(T,q\Phi\right).$$ The
function $P_\Phi$ is called the {\it pressure function} of $\Phi$. When $\Phi$ is  asymptotically additive on $X$,
$P_\Phi$ can be defined over $(-\infty,\infty)$.

Our main results are Theorems \ref{thm-1.1}-\ref{thm-1.4} formulated as follows:
\begin{thm}
\label{thm-1.1} Let $(X,T)$ be a TDS and  $\Phi=\{\log \phi_n\}_{n=1}^\infty$ an asymptotically  sub-additive potential  on $X$ which  satisfies  $\overline{\beta}(\Phi)
>-\infty$. Then $E_{\Phi}(\overline{\beta}(\Phi))\neq \emptyset$ and
\begin{align*}
\htop(T,E_{\Phi}(\overline{\beta}(\Phi)))&= \sup \{ h_\mu(T):~
\mu\in \mathcal{M}(X,T),
~\Phi_*(\mu)=\overline{\beta}(\Phi)\}\\&=\sup \{ h_\mu(T):~ \mu\in
\E(X,T), ~ \Phi_*(\mu)=\overline{\beta}(\Phi)\}.
\end{align*}
\end{thm}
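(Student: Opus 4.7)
The plan is to reduce the whole theorem to producing an invariant measure $\mu_0$ with $\Phi_*(\mu_0)=\overline{\beta}(\Phi)$, after which Kingman's sub-additive ergodic theorem delivers both the non-emptiness of $E_\Phi(\overline{\beta}(\Phi))$ and the lower bound in the entropy formula via an ergodic component of $\mu_0$.

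First I would establish the identity $\sup_{\mu\in\M(X,T)}\Phi_*(\mu)=\overline{\beta}(\Phi)$ together with the attainment of this supremum. The map $\mu\mapsto\Phi_*(\mu)=\inf_n\tfrac1n\int\log\phi_n\,d\mu$ is upper semi-continuous on the weak-$*$ compact set $\M(X,T)$ (an infimum of usc functions), so attainment is automatic. The inequality $\Phi_*(\mu)\le\overline{\beta}(\Phi)$ is immediate from $\Phi_*(\mu)\le\tfrac1n\int\log\phi_n\,d\mu\le\tfrac1n\sup_X\log\phi_n$ and letting $n\to\infty$. For the reverse, choose $x_n\in X$ with $\tfrac1n\log\phi_n(x_n)\to\overline{\beta}(\Phi)$ and let $\mu_0$ be a weak-$*$ limit of the empirical measures $\mu_n=\tfrac1n\sum_{j=0}^{n-1}\delta_{T^jx_n}$. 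Applying sub-additivity starting from each shift $T^ix_n$ ($0\le i<m$) and summing yields
\[
m\log\phi_n(x_n)\le\sum_{l=0}^{n-1}\log\phi_m(T^lx_n)+O(m^2);
\]
dividing by $mn$, letting $n\to\infty$, and using upper semi-continuity of $\log\phi_m$ under weak-$*$ convergence produces $\overline{\beta}(\Phi)\le\tfrac1m\int\log\phi_m\,d\mu_0$. Sending $m\to\infty$ yields $\Phi_*(\mu_0)\ge\overline{\beta}(\Phi)$. The asymptotically sub-additive case reduces to the sub-additive one by $\epsilon$-approximation. Ergodic decomposition $\mu_0=\int\nu\,d\tau(\nu)$, combined with $\Phi_*(\nu)\le\overline{\beta}(\Phi)$, forces $\Phi_*(\nu)=\overline{\beta}(\Phi)$ for $\tau$-a.e.\ component; Kingman's theorem then gives $\lambda_\Phi(x)=\overline{\beta}(\Phi)$ for $\nu$-a.e.\ $x$, so $E_\Phi(\overline{\beta}(\Phi))\neq\emptyset$ with full $\nu$-measure.

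The equality of the two suprema over $\M(X,T)$ and $\E(X,T)$ follows from the same ergodic decomposition, using $h_\mu=\int h_\nu\,d\tau$ and $\Phi_*(\nu)=\overline{\beta}(\Phi)$ $\tau$-a.e.\ whenever $\Phi_*(\mu)=\overline{\beta}(\Phi)$. The lower bound $\htop(T,E_\Phi(\overline{\beta}(\Phi)))\ge h_\nu(T)$ for each ergodic $\nu$ with $\Phi_*(\nu)=\overline{\beta}(\Phi)$ then follows from the standard Brin-Katok/Katok-formula inequality $h_\nu(T)\le\htop(T,Y)$ whenever $\nu$ is ergodic and $\nu(Y)=1$, applied to $Y=E_\Phi(\overline{\beta}(\Phi))$.

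\textbf{The main obstacle} is the upper bound $\htop(T,E_\Phi(\overline{\beta}(\Phi)))\le\sup\{h_\mu(T):\Phi_*(\mu)=\overline{\beta}(\Phi)\}$, where no regularity of the entropy map is available and so a pure Legendre-transform argument through $P_\Phi(q)-q\overline{\beta}(\Phi)$ as $q\to\infty$ is not enough. My plan is a direct Misiurewicz-style construction. For $\eta>0$ the sets
\[
E_N^{(\eta)}=\{x\in E_\Phi(\overline{\beta}(\Phi)):|\tfrac1k\log\phi_k(x)-\overline{\beta}(\Phi)|<\eta\text{ for all }k\ge N\}
\]
increase to $E_\Phi(\overline{\beta}(\Phi))$, so $\sigma$-stability of Bowen entropy on countable increasing unions yields some $N$ with $\htop(T,E_N^{(\eta)})$ within $\eta$ of $\htop(T,E_\Phi(\overline{\beta}(\Phi)))$. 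Extract $(n_k,\epsilon)$-separated sets $F_{n_k}\subset E_N^{(\eta)}$ witnessing this entropy, form the empirical measures $\mu_{n_k}=\tfrac1{|F_{n_k}|n_k}\sum_{x\in F_{n_k}}\sum_{j=0}^{n_k-1}\delta_{T^jx}$, and pass to a weak-$*$ limit $\mu\in\M(X,T)$. Misiurewicz's proof of the variational principle gives $h_\mu(T)\ge\htop(T,E_N^{(\eta)})-\eta$; the uniform estimate $|\tfrac1{n_k}\log\phi_{n_k}(x)-\overline{\beta}(\Phi)|<\eta$ on $F_{n_k}$, inserted into the sub-additive inequality from Step~1 and averaged over $F_{n_k}$, yields $\tfrac1m\int\log\phi_m\,d\mu\ge\overline{\beta}(\Phi)-\eta$, hence $\Phi_*(\mu)\ge\overline{\beta}(\Phi)-\eta$. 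A diagonal argument on $\eta\to0$ then produces a limit measure with $\Phi_*(\mu)=\overline{\beta}(\Phi)$ and $h_\mu(T)\ge\htop(T,E_\Phi(\overline{\beta}(\Phi)))$, completing the upper bound.
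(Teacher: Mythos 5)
Your Steps 1--3 (identifying $\overline{\beta}(\Phi)$ as the attained maximum of $\Phi_*$ via Lemma~\ref{lem-3.2}-type estimates, producing an ergodic maximiser through ergodic decomposition, and using $\nu(E_\Phi(\overline{\beta}(\Phi)))=1 \Rightarrow \htop(T,E_\Phi(\overline{\beta}(\Phi)))\geq h_\nu(T)$) are sound and essentially retrace what the paper packages as Lemma~\ref{lem-mf-sa}(2), Proposition~\ref{pro-1195}(1) and Proposition~\ref{pro-2.1}(3).

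The gap is in the upper bound. Your Misiurewicz-style construction produces, for each $\eta>0$, a measure $\mu^{(\eta)}$ with $\Phi_*(\mu^{(\eta)})\geq\overline{\beta}(\Phi)-\eta$ and $h_{\mu^{(\eta)}}(T)\geq\htop(T,E_\Phi(\overline{\beta}(\Phi)))-2\eta$, but the final ``diagonal argument on $\eta\to0$'' does not close. Passing to a weak-$*$ limit $\mu^{(\eta)}\to\mu^*$ and using upper semi-continuity of $\Phi_*$ does give $\Phi_*(\mu^*)=\overline{\beta}(\Phi)$, but there is no way to conclude $h_{\mu^*}(T)\geq\limsup h_{\mu^{(\eta)}}(T)$, because the entropy map $\mu\mapsto h_\mu(T)$ is \emph{not} assumed upper semi-continuous here — that is precisely the hypothesis Theorem~\ref{thm-1.1} avoids, and Section~\ref{S-6} exhibits TDS's where it fails dramatically. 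Decomposing $\mu^{(\eta)}$ ergodically does not rescue the argument either: the fraction of ergodic mass sitting on components with $\Phi_*$ strictly below $\overline{\beta}(\Phi)$ need not go to zero and could carry unbounded entropy (recall $\htop(T)=\infty$ is allowed in Theorem~\ref{thm-1.1}). What you would actually obtain is the ``fuzzy'' inequality $\lim_{\eta\to 0}\sup\{h_\mu(T):|\Phi_*(\mu)-\overline{\beta}(\Phi)|<\eta\}\geq\htop(T,E_\Phi(\overline{\beta}(\Phi)))$, which is the content of Theorem~\ref{thm-1.2}(ii), not of Theorem~\ref{thm-1.1}.

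The paper's proof of the upper bound sidesteps this entirely by applying Bowen's Lemma~\ref{Bow}: set $t=\sup\{h_\mu(T):\Phi_*(\mu)=\overline{\beta}(\Phi)\}$, take any $x\in E_\Phi(\overline{\beta}(\Phi))$ and any $\mu\in V(x)$; Lemma~\ref{lem-3.2} gives $\overline{\beta}(\Phi)=\lambda_\Phi(x)\leq\Phi_*(\mu)$, while Lemma~\ref{lem-mf-sa}(2) gives $\Phi_*(\mu)\leq\overline{\beta}(\Phi)$, so $\Phi_*(\mu)=\overline{\beta}(\Phi)$ and hence $h_\mu(T)\leq t$. Thus $E_\Phi(\overline{\beta}(\Phi))\subseteq R(t)$ and Bowen's lemma yields $\htop(T,E_\Phi(\overline{\beta}(\Phi)))\leq t$ directly, with no limiting procedure. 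The special feature exploited here is that $\overline{\beta}(\Phi)$ is the \emph{largest} possible value of $\Phi_*$, which squeezes every limit measure of orbit averages exactly onto the level set $\{\Phi_*=\overline{\beta}(\Phi)\}$ — the same squeeze is unavailable at a generic value $\alpha$, which is exactly why Theorem~\ref{thm-1.1} is restricted to $\alpha=\overline{\beta}(\Phi)$ and why your construction cannot be made to work for this statement as it stands.
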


We emphasize  that the above theorem only deals with the specific value $\alpha=\overline{\beta}(\Phi)$, which is the largest possible value for $\lambda_\Phi$ (cf.
Lemma \ref{lem-mf-sa}).
\begin{thm}
\label{thm-1.2} Let $(X,T)$ be a TDS such that the topological entropy $\htop(T)$
is finite. Suppose that  $\Phi=\{\log \phi_n\}_{n=1}^\infty$ is an asymptotically  sub-additive potential  on $X$ which  satisfies  $\overline{\beta}(\Phi)
>-\infty$. Then the pressure function $P_\Phi(q)$ is a continuous real convex
function on $(0,\infty)$ with
$P^{\prime}_\Phi(\infty):=\lim_{q\to\infty}P_\Phi(q)/q=\overline{\beta}(\Phi)$.
Moreover,
\begin{itemize}
\item[(i)]For any $t>0$, if $\alpha=P^\prime_\Phi(t+)$ or
$\alpha=P^\prime_\Phi(t-)$, then
$$
\lim_{\epsilon\to
0}\htop\left(T,\bigcup_{\beta\in(\alpha-\epsilon,\alpha+\epsilon)}
E_\Phi(\beta)\right)=\inf_{q>0}\{P_\Phi(q)-\alpha q\}=P_\Phi(t)-\alpha t,
$$
 where $P^\prime_\Phi(t-)$ and $P^\prime_\Phi(t+)$ denote the
left and right derivatives of $P_\Phi$ at $t$, respectively.
  Moreover the first equality is also valid when $\alpha=P^\prime_\Phi(\infty)$.
\item[(ii)] For any $t>0$ and any $\alpha\in
[P^\prime_\Phi(t-),P^\prime_\Phi(t+)]$,
 $$
\inf_{q>0}\{P_\Phi(q)-\alpha q\}=\lim_{\epsilon\to 0}\sup\{h_\mu(T):\;
\mu\in \M(X,T),\;|\Phi_*(\mu)-\alpha|<\epsilon\}.
  $$
Furthermore the  above equality is valid  for
$\alpha=P^\prime_\Phi(\infty)$.

 \item[(iii)] For any  $\alpha\in
\left(\lim_{t\to 0+}P^\prime_\Phi(t-),P^\prime_\Phi(\infty)\right)$,
 $$
\inf_{q>0}\{P_\Phi(q)-\alpha q\}=\sup\{h_\mu(T):\; \mu\in
\M(X,T),\;\Phi_*(\mu)=\alpha\}.
  $$
\end{itemize}
\end{thm}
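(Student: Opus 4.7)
The argument rests on three tools: (a) the variational principle for asymptotically sub-additive potentials, $P_\Phi(q)=\sup\{h_\mu(T)+q\Phi_*(\mu):\mu\in\M(X,T)\}$, (b) Bowen's theorem identifying $\htop(T,G_\mu)$ with $h_\mu(T)$ on the set $G_\mu$ of $\mu$-generic points of an ergodic $\mu$, and (c) a covering argument in the manner of Bowen which bounds $\htop(T,Y)$ above by $P_\Phi(q)-q\alpha$ whenever $\liminf_n \frac{1}{n}\log\phi_n(x)\ge\alpha$ on $Y$. I would first verify the announced properties of $P_\Phi$: convexity on $(0,\infty)$ follows from H\"older's inequality applied to $\phi_n^{\lambda q_1+(1-\lambda)q_2}=\phi_n^{\lambda q_1}\cdot\phi_n^{(1-\lambda)q_2}$ in the separated-set definition, and the two-sided estimate $q\overline{\beta}(\Phi)\le P_\Phi(q)\le\htop(T)+q\overline{\beta}(\Phi)$ (the upper bound using $\htop(T)<\infty$ to control the cardinality of maximal $(n,\epsilon)$-separated sets) delivers both continuity and $P^\prime_\Phi(\infty)=\overline{\beta}(\Phi)$.

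For (i), the upper bound comes from tool (c) applied to $Y=\bigcup_{|\beta-\alpha|<\epsilon} E_\Phi(\beta)$, taking the infimum over $q>0$ and then letting $\epsilon\to 0$. The lower bound is the crux. For $\alpha=P^\prime_\Phi(t+)$ with $t>0$, I would take $s_n\searrow t$ and, by the variational principle, pick $\mu_n\in\M(X,T)$ with $h_{\mu_n}(T)+s_n\Phi_*(\mu_n)>P_\Phi(s_n)-\delta_n$ for $\delta_n=(s_n-t)^2$. The resulting inequality $P_\Phi(s)-P_\Phi(s_n)\ge(s-s_n)\Phi_*(\mu_n)-\delta_n$ squeezes $\Phi_*(\mu_n)$: plugging $s=s_n+\eta$ (then $n\to\infty$, then $\eta\to 0+$) yields $\limsup_n\Phi_*(\mu_n)\le P^\prime_\Phi(t+)$, while plugging $s=s_n-(s_n-t)/2$ and invoking right-continuity of the right derivative of a convex function yields $\liminf_n\Phi_*(\mu_n)\ge P^\prime_\Phi(t+)$. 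Hence $\Phi_*(\mu_n)\to\alpha$ and $h_{\mu_n}(T)\to P_\Phi(t)-t\alpha$. Replacing $\mu_n$ by a suitable ergodic component $\nu_n$ (via ergodic decomposition and affinity of $\mu\mapsto h_\mu(T)+t\Phi_*(\mu)$), and applying Bowen's theorem to $G_{\nu_n}\subseteq E_\Phi(\Phi_*(\nu_n))$ (the inclusion by Kingman's sub-additive ergodic theorem), delivers the matching lower bound. The case $\alpha=P^\prime_\Phi(t-)$ is symmetric with $s_n\nearrow t$; the case $\alpha=P^\prime_\Phi(\infty)$ is covered by Theorem \ref{thm-1.1}.

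Parts (ii) and (iii) then reduce to affinity of $\mu\mapsto h_\mu(T)$ and $\mu\mapsto\Phi_*(\mu)$ on $\M(X,T)$. For (ii), when $\alpha\in(P^\prime_\Phi(t-),P^\prime_\Phi(t+))$ I write $\alpha=\theta P^\prime_\Phi(t-)+(1-\theta)P^\prime_\Phi(t+)$ and convex-combine the measure sequences from the two one-sided constructions; by affinity the combined measure has $\Phi_*(\mu)\to\alpha$ and $h_\mu(T)\to\theta(P_\Phi(t)-tP^\prime_\Phi(t-))+(1-\theta)(P_\Phi(t)-tP^\prime_\Phi(t+))=P_\Phi(t)-t\alpha$. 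For (iii), the interior assumption $\alpha\in(\lim_{t\to 0+}P^\prime_\Phi(t-),P^\prime_\Phi(\infty))$ supplies ergodic measures $\nu^\pm$ with $\Phi_*(\nu^\pm)$ strictly on either side of $\alpha$; a further convex combination of $\nu^\pm$ with the approximating measures from (ii) produces $\mu\in\M(X,T)$ with $\Phi_*(\mu)=\alpha$ exactly and $h_\mu(T)$ arbitrarily close to $\inf_{q>0}\{P_\Phi(q)-q\alpha\}$. The reverse inequality $\sup\{h_\mu(T):\Phi_*(\mu)=\alpha\}\le\inf_{q>0}\{P_\Phi(q)-q\alpha\}$ is immediate from the variational principle.

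The main obstacle is the squeeze step above: matching both $\limsup$ and $\liminf$ of $\Phi_*(\mu_n)$ at $\alpha=P^\prime_\Phi(t+)$, without differentiability of $P_\Phi$ and without assuming existence of equilibrium states, requires the careful choice $\delta_n=o(s_n-t)$ together with the convex-analytic fact that the right derivative of a convex function is right-continuous. A secondary technical point, the passage from $\mu_n$ to ergodic components preserving the limiting entropy and Lyapunov value, must be executed without the upper semi-continuity of the entropy map.
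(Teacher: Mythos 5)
Your overall strategy is sound, and the key step — the squeeze showing $\Phi_*(\mu_n)\to\alpha$ — is a genuinely different argument from the paper's. The paper first proves a high-dimensional version (Theorem \ref{thm-1.2-h}) and, to handle $\a\in\partial^e P_{\bf\Phi}(\bt)$ when $P_{\bf\Phi}$ is not differentiable, invokes Proposition \ref{lem-key2.2} (approximation of extreme subgradients by gradients at nearby differentiable points, via Straszewicz' theorem) together with the differentiable case. Your approach works directly in one dimension: taking $s_n\searrow t$, $\delta_n=o(s_n-t)$, and using right-continuity of $q\mapsto P'_\Phi(q+)$ to pin down $\Phi_*(\mu_n)$ from both sides. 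That is cleaner for the $k=1$ statement; the trade-off (which the paper flags in its introductory remarks) is that it does not obviously generalize to $k\ge 2$, which is why the paper routes through convex analysis. Similarly, your part (iii) argument by convex-combining $\nu^\pm$ with the near-optimal measures from (ii) is a direct substitute for the paper's Legendre-transform argument (Corollary \ref{cor-0330}), and both are correct.

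Two points in your write-up need repair. First, you cite ``Bowen's theorem identifying $\htop(T,G_\mu)$ with $h_\mu(T)$,'' but Bowen only proved $\htop(T,G_\mu)\le h_\mu(T)$ in general; equality requires saturation, which Theorem \ref{thm-1.2} does not assume. What you actually need (and can use) is Proposition \ref{pro-2.1}(3): for ergodic $\nu$, Kingman's theorem gives $\nu(E_\Phi(\Phi_*(\nu)))=1$, hence $\htop(T,E_\Phi(\Phi_*(\nu)))\ge h_\nu(T)$; the set $G_\nu$ and the full Bowen identity are not needed. Second, and more consequentially, the passage to ergodic components is stated in the wrong order and with the wrong affine functional. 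After the squeeze you only know that $\Phi_*(\mu_n)\to\alpha$ for a non-ergodic $\mu_n$; if you then pass to an ergodic component $\nu_n$ chosen to preserve the functional $h_{(\cdot)}(T)+t\Phi_*(\cdot)$, you get $h_{\nu_n}(T)+t\Phi_*(\nu_n)$ close to $P_\Phi(t)$, but this gives no control on $\Phi_*(\nu_n)$ individually — it could drift anywhere in $[P'_\Phi(t-),P'_\Phi(t+)]$, defeating the purpose. The fix is to reverse the order, exactly as in the paper's Lemma \ref{lem-3.9}: first pick an ergodic $\nu_n$ with $h_{\nu_n}(T)+s_n\Phi_*(\nu_n)>P_\Phi(s_n)-\delta_n$ (ergodic decomposition plus affinity of the $s_n$-functional, not the $t$-functional), then run the squeeze on $\nu_n$ directly. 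With that rearrangement your argument is complete.
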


By convexity, $P_\Phi$ may fail to be differentiable on a set which
is at most countable, however the left and right derivatives of
$P_\Phi$ exist everywhere. We remark that $\htop(T)$ is finite for a
lot of  TDS's such as expansive maps on compact metric space and
Lipschitz continuous transformations on finite dimensional compact
metric spaces (see e.g. \cite[Section 3.2]{KaHa-book}), and
asymptotically $h$-expansive TDS's \cite{Mis76}. We need to mention
that Theorem \ref{thm-1.2}(i) only deals with the ``fuzzy'' level sets and it is not valid for the standard level sets $E_\Phi(\alpha)$. Indeed, there are examples  such that
$$\htop\left(T,E_\Phi(\alpha)\right)<\inf_{q>0}\{P_\Phi(q)-\alpha
q\}$$ for any $\alpha=P^\prime_\Phi(t+)$ or
$\alpha=P^\prime_\Phi(t-)$ with $t>0$ (see e.g. Example
\ref{re-1.2}). Nevertheless the results of Theorem \ref{thm-1.2} can be
improved if we add an additional assumption that the entropy map
$\mu\to h_\mu(T)$ is upper semi-continuous on $\M(X,T)$. More
precisely, we have

\begin{thm}
\label{thm-1.3} Under the condition of Theorem \ref{thm-1.2}, we
assume furthermore that the entropy map $\mu\to h_\mu(T)$ is upper
semi-continuous on $\M(X,T)$. Then
\begin{itemize}
\item[(i)]For any $t>0$, if $\alpha=P^\prime_\Phi(t+)$ or
$\alpha=P^\prime_\Phi(t-)$, then $E_\Phi(\alpha)\neq \emptyset$ and
$$
\htop(T,E_\Phi(\alpha))=\inf_{q>0}\{P_\Phi(q)-\alpha q\}=P_\Phi(t)-\alpha t,
$$

\item[(ii)] For $\alpha\in\bigcup_{t>0}
[P^\prime_\Phi(t-),P^\prime_\Phi(\infty)]$,
 $$
\inf_{q>0}\{P_\Phi(q)-\alpha q\}=\max\{h_\mu(T):\; \mu\in
\M(X,T),\;\Phi_*(\mu)=\alpha\}.
  $$

\item[(iii)] If~ $t>0$ such that $t\Phi$ has a unique
equilibrium state $\mu_t\in \mathcal{M}(X,T)$, then $\mu_t$ is
ergodic, $P^\prime_\Phi (t)=\Phi_*(\mu_t)$,
$E_{\Phi}(P^\prime_\Phi (t))\neq\emptyset$ and
$\htop(T,E_{\Phi}(P^\prime_\Phi (t)))=h_{\mu_t}(T)$.
\end{itemize}
\end{thm}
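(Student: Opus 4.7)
The plan is to upgrade the approximate formulations of Theorem \ref{thm-1.2} to statements about attained maxima and exact level sets by exploiting joint upper semi-continuity: $h_\mu(T)$ is u.s.c.\ by hypothesis, and $\mu \mapsto \Phi_*(\mu)$ is also u.s.c.\ on $\M(X,T)$ because for any sub-additive $\Psi = \{\log\psi_n\}$ one has $\Psi_*(\mu) = \inf_n (1/n)\int \log\psi_n\, d\mu$ (an infimum of weak-$\ast$ continuous affine functions), and the uniform asymptotic approximation transfers the property to $\Phi_*$. Combined with the sub-additive variational principle $P_\Phi(t) = \sup_\mu\{h_\mu(T) + t\Phi_*(\mu)\}$, this immediately guarantees that equilibrium states for $t\Phi$ exist for every $t>0$.

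For part (ii), I fix $t > 0$ and $\alpha$ in the prescribed range, so $\alpha \in \partial P_\Phi(t)$ and $\inf_{q>0}\{P_\Phi(q)-\alpha q\} = P_\Phi(t) - \alpha t$. Invoking Theorem \ref{thm-1.2}(ii), pick $\mu_n \in \M(X,T)$ with $\Phi_*(\mu_n) \to \alpha$ and $h_{\mu_n}(T) \to P_\Phi(t) - \alpha t$. Pass to a weak-$\ast$ accumulation point $\mu$; upper semi-continuity yields $h_\mu(T) \geq P_\Phi(t) - \alpha t$ and $\Phi_*(\mu) \geq \alpha$, while the variational principle bounds $h_\mu(T) + t\Phi_*(\mu) \leq P_\Phi(t)$. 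These inequalities pin down $\Phi_*(\mu) = \alpha$ and $h_\mu(T) = P_\Phi(t) - \alpha t$, so the supremum is realised as a maximum.

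For part (i), given $\alpha = P'_\Phi(t\pm)$, use part (ii) to secure an equilibrium state $\mu$ for $t\Phi$ with $\Phi_*(\mu) = \alpha$ and $h_\mu(T) = P_\Phi(t) - \alpha t$. Decompose $\mu = \int \nu\, d\tau(\nu)$ ergodically. Since $h_\cdot(T)$ is affine on this decomposition, as is $\Phi_*(\cdot)$ via Kingman's theorem ($\Phi_*(\mu) = \int \Phi_*(\nu)\, d\tau$), averaging the inequality $h_\nu(T) + t\Phi_*(\nu) \leq P_\Phi(t)$ forces equality $\tau$-a.e., i.e., $\tau$-a.e.\ $\nu$ is an ergodic equilibrium state, hence $\Phi_*(\nu) \in [P'_\Phi(t-), P'_\Phi(t+)]$. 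Because $\alpha$ sits at an endpoint of this interval and $\int \Phi_*(\nu)\,d\tau = \alpha$, one gets $\Phi_*(\nu) = \alpha$ for $\tau$-a.e.\ $\nu$. For each such ergodic $\nu$, Kingman's theorem yields $\nu(E_\Phi(\alpha)) = 1$, and Bowen's theorem on topological entropy of subsets (applied to the $\nu$-full measure set $G_\nu \cap E_\Phi(\alpha)$) delivers $\htop(T, E_\Phi(\alpha)) \geq h_\nu(T)$. Since $\int h_\nu(T)\, d\tau = h_\mu(T)$, the essential supremum of $h_\nu$ is at least $h_\mu$, so $\htop(T, E_\Phi(\alpha)) \geq h_\mu(T) = \inf_q\{P_\Phi(q) - \alpha q\}$; the matching upper bound is the standard covering argument underlying Theorem \ref{thm-1.2}(i).

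For part (iii), uniqueness of the equilibrium state $\mu_t$ forces $\tau$-a.e.\ ergodic component of $\mu_t$ above to coincide with $\mu_t$, so $\mu_t$ is ergodic. Differentiability of $P_\Phi$ at $t$ also follows, for otherwise the subdifferential $\partial P_\Phi(t) = [P'_\Phi(t-), P'_\Phi(t+)]$ contains some endpoint distinct from $\Phi_*(\mu_t)$, and part (ii) would manufacture a second equilibrium state with that endpoint as its value of $\Phi_*$. Hence $P'_\Phi(t) = \Phi_*(\mu_t)$; then $\lambda_\Phi = P'_\Phi(t)$ on a full $\mu_t$-measure set, so $E_\Phi(P'_\Phi(t))$ is nonempty and part (i) gives $\htop(T, E_\Phi(P'_\Phi(t))) = h_{\mu_t}(T)$. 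The main obstacle throughout is the lower bound in part (i): extracting an ergodic component whose entropy approaches $h_\mu(T)$ and whose Lyapunov exponent is exactly $\alpha$ relies crucially on the boundary position of $\alpha$ in $\partial P_\Phi(t)$, together with the availability of Bowen's generic-points inequality in this general (possibly non-specification) sub-additive setting.
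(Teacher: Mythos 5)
Your argument is broadly sound, and it departs from the paper's route at a meaningful point. For part (i), after obtaining an (not necessarily ergodic) equilibrium state $\mu$ for $t\Phi$ with $\Phi_*(\mu)=\alpha$, the paper passes to the set $\mathcal I_\alpha(\Phi,t)=\{\nu\in\mathcal I(\Phi,t):\Phi_*(\nu)=\alpha\}$, shows it is compact and convex, and invokes the Krein--Milman theorem to extract an extreme point, which is then shown to be ergodic. You instead take the ergodic decomposition $\mu=\int\nu\,d\tau(\nu)$, average the variational inequality to see that $\tau$-a.e.\ $\nu$ is an equilibrium state with $\Phi_*(\nu)\in[P'_\Phi(t-),P'_\Phi(t+)]$, and use the fact that $\alpha$ is an endpoint of that interval and $\int\Phi_*(\nu)\,d\tau=\alpha$ to force $\Phi_*(\nu)=\alpha$ a.e.; the essential supremum of $h_\nu$ over these components then dominates $h_\mu$. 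In one dimension this is a more elementary route (no Krein--Milman), and it works. A small cosmetic point: invoking Bowen's generic-point inequality is unnecessary here --- Proposition 2.1(3) applied to $\nu(E_\Phi(\alpha))=1$ already gives $\htop(T,E_\Phi(\alpha))\ge h_\nu(T)$; Bowen's $\htop(T,G_\nu)\le h_\nu(T)$ points the wrong way.

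There is, however, a genuine gap in part (ii). You assert that for $\alpha$ in the prescribed range $\bigcup_{t>0}[P'_\Phi(t-),P'_\Phi(\infty)]$ one may fix $t>0$ with $\alpha\in\partial P_\Phi(t)=[P'_\Phi(t-),P'_\Phi(t+)]$. This fails precisely at the boundary value $\alpha=P'_\Phi(\infty)=\overline\beta(\Phi)$ when $P_\Phi$ is strictly convex, since then $P'_\Phi(t+)<P'_\Phi(\infty)$ for every $t$ and $\alpha$ lies in no $\partial P_\Phi(t)$. The paper handles this endpoint via the set $\mathrm{cl}^\delta_+(\partial P_\Phi(\R_+))$ and a separate limiting argument. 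Your technique can be repaired: use Theorem 1.2(ii) to pick $\mu_n$ with $\Phi_*(\mu_n)\to\alpha$ and $h_{\mu_n}(T)\to L:=\inf_{q>0}\{P_\Phi(q)-\alpha q\}$, pass to a weak-$*$ limit $\mu$, note $h_\mu(T)\ge L$ and $\Phi_*(\mu)\ge\alpha$ by upper semi-continuity, hence $\Phi_*(\mu)=\alpha$ because $\alpha=\overline\beta(\Phi)$ is maximal, and then $h_\mu(T)\le\inf_{q>0}\{P_\Phi(q)-q\alpha\}=L$ from the variational principle --- but this step must be supplied, not assumed away by the false claim $\alpha\in\partial P_\Phi(t)$. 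Also, in your opening sentence, $\mu\mapsto(1/n)\int\log\psi_n\,d\mu$ is only upper semi-continuous (not weak-$*$ continuous), since $\psi_n$ may vanish; the infimum of upper semi-continuous functions is still upper semi-continuous, so the conclusion stands, but the stated reason is off.
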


A significant part of Theorem \ref{thm-1.3}(i) is that we don't need the differentiability
assumption for $P_\Phi$. To the best of our knowledge, this result is not known even in the additive case.
It has a nice application in the multifractal analysis for certain probability measures on symbolic spaces
(see Remark \ref{re-4.10}).
We remark that the assumption of upper semi-continuity for  the entropy map is quite essential for the results in
Theorem \ref{thm-1.3}. This assumption is satisfied by some natural TDS's such as $h$-expansive TDS's \cite{Bow72} and more generally,  asymptotically
$h$-expansive TDS's \cite{Mis76} which include, for example,  $C^\infty$ transformations on Riemannian manifolds
\cite{Buz97}.
Without this assumption, the multifractal behavior may be very irregular and complicated.
See Section \ref{S-6} for some   examples. We remark that the differentiability property of the pressure functions
was studied in \cite{MaSm00, MaSm03} for rational maps on the Riemann sphere for certain additive potentials.

Meanwhile Theorems \ref{thm-1.1}-\ref{thm-1.3} are about asymptotically sub-additive potentials, our next theorem is concerned with  asymptotically additive potentials.  A TDS $(X,T)$ is called to be {\it saturated} if
for any $\mu\in \M(X,T)$, we have $G_\mu\neq \emptyset$ and $\htop(T,G_\mu)=h_\mu(T)$, where $G_\mu$ denotes the set of
{\it $\mu$-generic points} defined by
$$
G_\mu:=\left\{x\in X:\; \frac{1}{n}\sum_{j=0}^{n-1}\delta_{T^jx}\to \mu \mbox{ in the weak* topology as $n\to \infty$}
\right\},
$$
where $\delta_y$ denotes the probability measure whose support is the single point $y$.
It was shown independently in \cite{FLP08,PfSu07} that if a TDS $(X,T)$ satisfies the specification property,
then $(X,T)$ is saturated.

\begin{thm}
\label{thm-1.4}
Let $(X,T)$ be a TDS and let $\Phi$ be an asymptotically additive potential on $X$. Set
$\Omega=\{\Phi_*(\mu):\; \mu\in \M(X,T)\}$. Then $\Omega$ is a bounded closed interval. Furthermore we have the following statements.
\begin{itemize}
\item[(i)] $\{\alpha\in \R:\; E_\Phi(\alpha)\neq \emptyset\}\subseteq \Omega$.
\item[(ii)] If $\htop(T)<\infty$, then $P_\Phi$ is a real convex function over $\R$. Furthermore,
$$
\alpha\in \Omega \Longleftrightarrow \inf\{P_\Phi(q)-\alpha q:\; q\in \R\}\neq -\infty\Longleftrightarrow \inf\{P_\Phi(q)-\alpha q:\; q\in \R\}\geq 0.
$$
\item[(iii)] If  $\htop(T)<\infty$ and the entropy map is upper semi-continuous, then for each $\alpha\in \Omega$,
$$
\sup\{h_\mu(T):\; \mu\in \M(X,T),\; \Phi_*(\mu)=\alpha\}=\inf\{P_\Phi(q)-\alpha q:\; q\in \R\};
$$
Furthermore, for $\alpha\in \bigcup_{t\in \R}\{P^\prime_\Phi(t-),P^\prime_\Phi(t+)\}\cup P_\Phi^\prime(\pm\infty)$,
we have  $E_\Phi(\alpha)\neq \emptyset$ and $$
\htop(T,E_\Phi(\alpha))=\inf\{P_\Phi(q)-\alpha q:\; q\in \R\},
$$
where $P_\Phi^\prime(+\infty):=\lim_{q\to +\infty}P_\Phi(q)/q$ and $P_\Phi^\prime(-\infty):=\lim_{q\to -\infty}P_\Phi(q)/q$.
\item[(iv)] Assume that $(X,T)$ is saturated. Then $E_\Phi(\alpha)\neq \emptyset$ if and only if $\alpha\in \Omega$.
Furthermore, $\htop(T, E_\Phi(\alpha))=\sup\{h_\mu(T):\; \mu\in \M(X,T),\; \Phi_*(\mu)=\alpha\}$ for any $\alpha\in \Omega$.
\end{itemize}
\end{thm}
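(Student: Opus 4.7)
The plan is to leverage the observation that an asymptotically additive potential $\Phi$ is one for which both $\Phi$ and $-\Phi$ are asymptotically sub-additive, so that Theorems \ref{thm-1.1}--\ref{thm-1.3} apply to both $\Phi$ and $-\Phi$. The central auxiliary fact I would record first is that $\mu \mapsto \Phi_*(\mu)$ is weak-$*$ continuous on $\M(X,T)$, and that for every $\epsilon>0$ there is a continuous function $g_\epsilon$ on $X$ with
\[
\limsup_{n\to\infty}\frac{1}{n}\sup_{x\in X}\left|\log\phi_n(x)-\sum_{i=0}^{n-1}g_\epsilon(T^ix)\right|\leq\epsilon.
\]
This uniform approximation by Birkhoff sums, which follows by applying the asymptotic sub-additivity definition to both $\Phi$ and $-\Phi$ and combining the two resulting comparisons, is the technical engine of the whole argument.

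First, $\Omega$ is a bounded closed interval because it is the continuous affine image of the compact convex set $\M(X,T)$. For (i), if $x\in E_\Phi(\alpha)$ and $\mu$ is any weak-$*$ accumulation point of the empirical measures $\frac{1}{n}\sum_{j=0}^{n-1}\delta_{T^jx}$, then $\mu\in\M(X,T)$ and the approximation lemma forces $\Phi_*(\mu)=\alpha$. For (ii), convexity of $P_\Phi$ on $\R$ follows because $P_\Phi$ is a uniform limit on compacts of the convex additive pressures $q\mapsto P(T,qg_\epsilon)$. The equivalences come from convex analysis together with Theorem \ref{thm-1.1}: applying the latter to $\Phi$ and $-\Phi$ yields $P_\Phi'(+\infty)=\max\Omega$ and $P_\Phi'(-\infty)=\min\Omega$, so $\inf_q\{P_\Phi(q)-\alpha q\}>-\infty$ precisely when $\alpha\in[P_\Phi'(-\infty),P_\Phi'(+\infty)]=\Omega$; conversely, whenever $\alpha\in\Omega$, choosing $\mu$ with $\Phi_*(\mu)=\alpha$ and invoking the variational principle $P_\Phi(q)\ge h_\mu(T)+q\alpha\ge q\alpha$ gives $\inf_q\{P_\Phi(q)-\alpha q\}\ge 0$.

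For (iii), the upper bound on the supremum of entropies is the variational principle; the reverse bound is obtained by picking, at a minimizer $q^*$ of $P_\Phi(q)-\alpha q$, an equilibrium state $\mu$ for $q^*\Phi$ via upper semi-continuity and verifying $\Phi_*(\mu)\in[P_\Phi'(q^*-),P_\Phi'(q^*+)]\ni\alpha$; the cases $\alpha=P_\Phi'(\pm\infty)$ are handled by Theorem \ref{thm-1.1} applied to $\pm\Phi$. For the level-set entropy formula at $\alpha=P_\Phi'(t\pm)$, apply Theorem \ref{thm-1.3}(i) directly to $\Phi$ when $t>0$; apply Theorem \ref{thm-1.3}(i) to $-\Phi$ at parameter $-t>0$ when $t<0$, using $E_\Phi(\alpha)=E_{-\Phi}(-\alpha)$ and $P_{-\Phi}(s)=P_\Phi(-s)$; take a limit as $t\to 0^\pm$ when $t=0$; and invoke Theorem \ref{thm-1.1} for $\pm\Phi$ when $\alpha=P_\Phi'(\pm\infty)$. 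For (iv), saturation gives $\htop(T,G_\mu)=h_\mu(T)$ for every $\mu\in\M(X,T)$, and the approximation lemma together with weak-$*$ continuity of $\Phi_*$ forces $G_\mu\subset E_\Phi(\Phi_*(\mu))$; taking the supremum over $\mu$ with $\Phi_*(\mu)=\alpha$ yields the lower bound, while the matching upper bound follows from a Bowen-type argument: any invariant measure obtained as a weak-$*$ limit of empirical measures along points of $E_\Phi(\alpha)$ has $\Phi_*$-value $\alpha$, and the topological entropy of $E_\Phi(\alpha)$ is dominated by such measures through the variational principle.

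The main obstacle will be handling non-differentiable points of $P_\Phi$ in (iii), where one must realize every $\alpha$ in the sub-differential interval $[P_\Phi'(t-),P_\Phi'(t+)]$ as $\Phi_*(\mu)$ for some equilibrium state $\mu$ of $t\Phi$ attaining $h_\mu(T)=P_\Phi(t)-\alpha t$; the natural approach is to convex-combine two boundary equilibrium measures and use affinity of both $h_\mu$ and $\Phi_*$, but preserving equality in the variational principle under the combination requires care. A secondary subtlety is the $\htop$-upper-bound step in (iv) in the absence of upper semi-continuity: one must ensure that the Bowen-type covering argument interacts cleanly with the uniform approximation lemma, so that every invariant measure extracted along $E_\Phi(\alpha)$ actually has $\Phi_*$-value $\alpha$ and contributes to the supremum on the right-hand side.
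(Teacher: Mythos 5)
Your route is genuinely different from the paper's: the paper derives parts (i), (ii), (iv) and the first half of (iii) by specializing the ratio-type Theorem \ref{thm-6.1} (taking $k=1$, $\psi_n\equiv 1$), and then proves the level-set entropy formula in (iii) by a self-contained claim about existence of ergodic equilibrium states, invoking Theorem \ref{lem-mfsa1-1}(ii) and the Krein--Milman theorem. You instead try to prove everything directly and, for the level-set formula, to transfer Theorem \ref{thm-1.3}(i) from $\Phi$ at $t>0$ to $-\Phi$ at $-t>0$ for $t<0$, with limiting arguments filling the remaining cases. Most of this works, and the $\pm\infty$ reduction through Theorem \ref{thm-1.1} combined with the first half of (iii) is actually cleaner than the paper's.

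The genuine gap is the case $t=0$ in the level-set formula of (iii). When $\alpha=P'_\Phi(0+)$ (or $P'_\Phi(0-)$) and $P_\Phi$ is not affine in a neighbourhood of $0$, this $\alpha$ is not of the form $P'_\Phi(t\pm)$ for any $t\neq 0$, so neither the direct application of Theorem \ref{thm-1.3}(i) nor its $-\Phi$ mirror covers it. Your ``take a limit as $t\to 0^\pm$'' does not close this: the entropies $\htop(T,E_\Phi(\alpha_n))$ do not converge to $\htop(T,E_\Phi(\alpha))$ since the level sets are disjoint, and while taking a weak-$*$ limit of the ergodic equilibrium measures $\nu_n$ coming from $t_n\downarrow 0$ does produce $\nu$ with $\Phi_*(\nu)=\alpha$ and $h_\nu(T)=P_\Phi(0)$, that limit $\nu$ need not be ergodic, so you cannot conclude $\nu(E_\Phi(\alpha))=1$. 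One cannot repair this via the ergodic decomposition either, since $\alpha=P'_\Phi(0\pm)$ is generally not an extremal value of $\Phi_*$, so the fibers $\{\theta:\Phi_*(\theta)=\alpha\}$ need not carry the decomposing mass. What is needed is precisely the paper's argument at $t=0$: the subdifferential formula $\partial P_\Phi(0)=\{\Phi_*(\mu):\mu\in\I(\Phi,0)\}$, which Theorem \ref{lem-mfsa1-1}(ii) provides for asymptotically additive potentials (the extension to $\bq=0$ is exactly why part (ii) of that theorem is separated out), followed by a Krein--Milman argument in $\I_\alpha(\Phi,0)$ to extract an ergodic member. You do flag the extreme-point difficulty in your final paragraph, but you do not supply the mechanism that actually resolves it.

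A smaller imprecision occurs in your argument for the first half of (iii): having chosen a minimizer $q^*$ of $P_\Phi(q)-\alpha q$ and an equilibrium state $\mu$ for $q^*\Phi$, you only get $\Phi_*(\mu)\in\partial P_\Phi(q^*)$, which contains $\alpha$ but need not equal it. To conclude $\Phi_*(\mu)=\alpha$ you should invoke the full surjectivity statement $\partial P_\Phi(q^*)=\{\Phi_*(\mu):\mu\in\I(\Phi,q^*)\}$ and choose $\mu$ accordingly (or route through the Legendre-transform corollary as the paper does via Theorem \ref{thm-6.1}(iii)). This is easy to fix but worth being explicit about.
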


We remark that Theorem \ref{thm-1.4}(iii)-(iv) extend  previous results about  the Lyapunov spectrum  of continuous positive matrix-valued functions \cite{Fen03} and
the  Lyapunov spectrum  of certain asymptotically additive potentials \cite{KeSt04} on subshifts of finite type.

 In this paper, we also study the high dimensional Lyapunov spectrum. For  a finite family of asymptotically sub-additive
(resp. asymptotically additive)  potentials
$\Phi_i=\{\log \phi_{n,i}\}_{n=1}^\infty$, $i=1,\ldots,k$,  and $\a=(a_1,\ldots,a_k)$, we define
$$
E_{\bf \Phi}(\a)=\left\{x\in X:\;\lim_{n\to \infty}\frac{\log \phi_{n,i}(x)}{n}=a_i\mbox { for }1\leq i\leq k \right\}.
$$
We indeed obtain the high dimensional versions of Theorems \ref{thm-1.2}-\ref{thm-1.4} regarding the properties about
$\htop(T,E_{\bf \Phi}(\a))$ and the corresponding variational relations (see Theorems \ref{thm-1.2-h}, \ref{thm-1.3-h},
\ref{thm-6.1}).   For instance, when $(X,T)$ is a saturated TDS such that the entropy map is upper-semi continuous, then
for  any asymptotically additive potentials $\Phi_i$ ($i=1,\ldots,k$),  we  have
\begin{equation*}
\label{e-0404'}
E_{\bf \Phi}(\a)\neq \emptyset\Longleftrightarrow \a\in A:=\{{\bf \Phi}_*(\mu): \; \mu\in \M(X,T)\}
\end{equation*}
and
\begin{equation*}
\label{e-i1'}
\begin{split}
\htop(T,E_{\bf \Phi}(\a))&=\sup\{h_\mu(T):\; \mu\in \M(X,T) \mbox{ with } {\bf \Phi}_*(\mu)=\a\}\\
\mbox{}&=\inf\{P_{\bf \Phi}({\bf q})-\a\cdot {\bf  q}:\; {\bf q}\in \R^k\}, \qquad \forall\; \a\in A.
\end{split}
\end{equation*}
 where ${\bf \Phi}_*(\mu):=((\Phi_1)_*(\mu),\cdots,(\Phi_k)_*(\mu))$,
 $P_{\bf \Phi}({\bf q})=P(T,\sum_{i=1}^kq_i\Phi_i)$ for ${\bf q}=(q_1,\ldots,q_k)$, and $\a\cdot {\bf  q}$ denotes the inner product
 of $\a$ and ${\bf q}$ (see Theorem \ref{thm-6.1}(iii)-(iv)).

  As an application of the above result, we can improve a  result of Barreira and Gelfert in \cite{BaGe06} on Lyapunov exponents on nonconformal
 repellers. To see it, let $\Lambda$ be a repeller of a $C^{1}$ local diffeomorphism  $f:\;\R^2\to \R^2$,
  such that $f$ satisfies a cone condition on $\Lambda$ (see \cite{BaGe06} for the definition).
 Let $\Phi_i=\{\log \phi_{n,1}\}_{n=1}^\infty$ ($i=1,2$) are two potentials given by
 $$
 \phi_{n,i}(x)=\sigma_i(d_xf^n),\quad n\in \N,\; i=1,2,
 $$
 where $\sigma_i(d_xf^n)$ $(i=1,2$) denote the singular values of the differential of $f^n$ at $x$.
 Both $\Phi_1$ and $\Phi_2$ are asymptotically additive (see \cite[Proposition 4]{BaGe06}).
  Under the  additional assumptions  that $f$ is $C^{1+\delta}$ and $f$ has bounded distortion,  Barreira and Gelfert showed that
  $\htop(T,E_{\bf \Phi}(\a))=\inf\{P_{\bf \Phi}({\bf q})-\a\cdot {\bf  q}:\; {\bf q}\in \R^k\}$ for
  each gradient $\a$ of $P_{\bf \Phi}$ (see \cite[Theorem 1]{BaGe06}).
  However according to our result, these two additional assumptions can be removed (although in this case $P_{\bf \Phi}$ may be not differentiable)
  and the variational relation   holds for each $\a\in A:=\{{\bf \Phi}_*(\mu): \; \mu\in \M(X,T)\}$
  (we remark that $A$ contains the subdifferentials of $P_{\bf \Phi}$; see Theorem \ref{lem-mfsa1-1}). Below we give some further remarks.

\begin{re}
{\rm
\begin{itemize}
\item[(i)] In the definition of  sub-additive potential $\Phi=\{\log \phi_n\}$, we admit that $\phi_n(x)$ takes the value $0$. As an advantage, we can cover the interesting case that
$\phi_n(x)=\|\prod_{i=0}^{n-1}M(T^ix)\|$, where $M$ is an arbitrary continuous matrix-valued function.
\item[(ii)] There are some natural examples of  asymptotically sub-additive (resp. asymptotically  additive) potentials which may not be sub-additive, such as the general potential $$\Phi=\{\log \mu(I_n(x))\}_{n=1}^\infty,$$ where $\mu$ is a weak Gibbs measure on a full shift space over finite symbols and $I_n(x)$ denotes the $n$-th cylinder about $x$ (cf. \cite{FeOl03} and Proposition \ref{pro-le}(iv)).  By the way, the quotient space of all asymptotically additive potentials on $X$ under certain equivalence relation is a separable Banach space endowed with some norm (cf. Remark \ref{re-last}(ii)).      These are two main reasons that we setup the theory for  asymptotically sub-additive  potentials rather than sub-additive potentials.
\item[(iii)] For the proofs of Theorems \ref{thm-1.2}-\ref{thm-1.4}, we first prove their higher dimensional versions by applying convex analysis and the thermodynamic formalism, then derive the one-dimensional versions. Although it looks a bit strange and  there are relatively simple alternative approaches for the one-dimensional versions, however the extension to higher dimensions along those approaches seems difficult.
\item[(iv)] Let $\Phi=\{\log \phi_n\}_{n=1}^\infty$, where $\phi_n$'s are non-negative continuous functions on $X$ satisfying
$$
\phi_{n+m}(x)\leq C_n\phi_n(x)\phi_m(T^nx),\quad \forall\; n,m\in \N,\; x\in X,
$$
where $(C_n)$ is a sequence of positive numbers with $\lim_{n\to \infty}(1/n)\log C_n=0$. We do not know whether $\Phi$ is asymptotically sub-additive.
However, one can manage to prove Lemma \ref{lem-3.2} and Theorem \ref{thm-3.3} for this $\Phi$  by an approach similar to \cite{CFH08}. Furthermore, Theorems \ref{thm-1.1}-\ref{thm-1.3} remain valid for this kind of potentials.
\end{itemize}
}
\end{re}

 The content of the paper is following.
 In Section  \ref{S-2}, we give some preliminaries about topological entropy and
 topological pressures, and we also present and derive some results in  convex analysis that are needed in the proof of our theorems.
In section \ref{S-3}, we introduce the
 asymptotically sub-additive thermodynamic formalism and we also set up a formula for the
subdifferentials of pressure functions. The high dimensional
versions of Theorem \ref{thm-1.2}-\ref{thm-1.3} are formulated and
proved in Section \ref{S-4}.  In particular, we give a class of
sub-additive
 potentials on full shifts which satisfy part of (\ref{e-i1}) in Section \ref{S-4}.
 The high dimensional version of Theorem \ref{thm-1.4} is formulated and proved in Section \ref{S-5}.
 In Section \ref{S-6}, we give some examples about the irregular multifractal behaviors for additive potentials on
 TDS's for which the entropy maps is not upper semi-continuous.  In Appendix \ref{A},
 we give some properties about asymptotically sub-additive (resp. asymptotically additive potentials). In Appendix \ref{B}, we summarize the main notation and conventions
 used in this paper.

\section{Preliminaries}\label{S-2}
In this section, we first give  the definitions and some properties about
 the topological entropy of non-compact sets and  the topological pressure of non-additive potentials,
 for which the reader is referred to \cite{Bow73, Bar96, Pes-book, CFH08} for more details. Then we present some
 notation and known facts in convex analysis and derive several results which are need in the proofs of our main results.
\setcounter{equation}{0}

\subsection{Topological entropy}\label{S-2.1}
Let $(X, d)$ be a compact metric space and $T : X \to X$  a
continuous transformation. For any $n \in \N$ we define a new
metric $d_n$ on $X$ by
\begin{equation}
\label{e-2.1}
 d_n(x, y) =\max\left\{d\left(T^k(x), T^k(y)\right):\;k =0, \ldots,n-1 \right\},
 \end{equation}
 and for every $\epsilon>0$ we denote by $B_n(x, \epsilon)$ the open ball
of radius $\epsilon$ in the metric $d_n$ around $x$, i.e., $B_n(x,
\epsilon) =\{y \in X : \ d_n(x, y) < \epsilon\}$. Let $Z \subset X$ and $\epsilon> 0$. We say that an at
most countable  collection of balls $\Gamma
=\{B_{n_i}(x_i,\epsilon)\}_i$ covers $Z$ if $Z \subset \bigcup_i
B_{n_i}(x_i,\epsilon)$. For
$\Gamma=\{B_{n_i}(x_i,\epsilon)\}_i$, put $n(\Gamma) =\min_i
n_i$. Let $s \geq 0$ and define $$M(Z,s, N, \epsilon) =\inf\sum_i
\exp(-sn_i),$$
 where the infinum is taken over all collections $\Gamma=\{B_{n_i}(x_i,\epsilon)\}$ covering $Z$,
 such that $n(\Gamma) \geq  N$. The quantity $M(Z, s,
N, \epsilon)$ does not decrease with $N$, hence the following
limit exists: $$M(Z, s, \epsilon) = \lim_{N\to \infty} M(Z,s, N,
\epsilon).$$ There exists a critical value
of the parameter s, which we will denote by $\htop(T,Z,\epsilon)$,
where $M(Z, s, \epsilon)$ jumps from $\infty$ to $0$, i.e.
\[
M(Z, s, \epsilon) = \left\{
\begin{array}{ll}
0, & s > \htop(T,Z,\epsilon),\\
\infty,& s < \htop(T,Z,\epsilon).
\end{array}
\right.
\]
It is clear to see that $\htop(T,Z,\epsilon)$ does not decrease with $\epsilon$, and  hence the following limit exists, $$\htop(T, Z) = \lim_{\epsilon\to 0}\htop(T,Z,\epsilon).$$
 We  call
$\htop(T, Z)$ the {\it topological entropy of $T$ restricted to
$Z$} or, simply, the {\it topological entropy of $Z$}, when there
is no confusion about $T$. In particular we write $\htop(T)$ for $\htop(T,X)$. Here we recall some of the basic properties about the topological entropy.

\begin{pro}[\cite{Bow73,Pes-book}] \label{pro-2.1}
 The topological entropy as defined above satisfies the
following:
\begin{itemize}
\item[(1)] $\htop(T, Z_1) \leq\htop(T, Z_2)$ for any $Z_1
\subseteq Z_2\subseteq X$.
\item[(2)] $\htop(T, Z) =\sup_i\htop(T,
Z_i)$, where $Z =\bigcup_{i=1}^\infty Z_i\subseteq X$.
\item[(3)] Suppose $\mu$
is an invariant measure and $Z \subseteq X$ is such that $\mu(Z) =
1$, then $\htop(T, Z) \geq h_\mu(T)$, where $h_\mu(T )$ is the
measure-theoretic entropy.
\end{itemize}
\end{pro}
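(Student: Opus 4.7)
The plan is to treat (1), (2), (3) in that order, since (1) and (2) are purely combinatorial facts about the Carath\'eodory-type construction defining $M(Z,s,\epsilon)$, whereas (3) is where the dynamical content enters. Statement (1) is immediate: any countable family $\Gamma=\{B_{n_i}(x_i,\epsilon)\}_i$ with $n(\Gamma)\ge N$ that covers $Z_2$ also covers $Z_1$, so $M(Z_1,s,N,\epsilon)\le M(Z_2,s,N,\epsilon)$, and this inequality survives the $N\to\infty$ and $\epsilon\to 0$ limits and hence transfers to the critical value $\htop$.

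For (2), one direction is (1). For the other, I would fix $\epsilon>0$ and $s>\sup_i\htop(T,Z_i,\epsilon)$, so that $M(Z_i,s,\epsilon)=0$ for each $i$. Given $\eta>0$ and $N\in\N$, pick for each $i$ a cover $\Gamma_i$ of $Z_i$ with $n(\Gamma_i)\ge N$ and total $s$-weight less than $\eta\,2^{-i}$; then $\bigcup_i\Gamma_i$ is a cover of $Z$ witnessing $M(Z,s,N,\epsilon)<\eta$. Since $\eta$ and $N$ were arbitrary, $M(Z,s,\epsilon)=0$, so $\htop(T,Z,\epsilon)\le s$; letting $s\downarrow\sup_i\htop(T,Z_i,\epsilon)$ and $\epsilon\to 0$ concludes the argument.

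Part (3) is the substantive step. I would invoke the Brin--Katok local entropy theorem: for $\mu$-a.e.\ $x$, the local entropy $h_\mu(T,x):=\lim_{\epsilon\to 0}\liminf_{n\to\infty}-\tfrac{1}{n}\log\mu(B_n(x,\epsilon))$ exists and integrates to $h_\mu(T)$. Fix $h<h_\mu(T)$. The sets $\{f_\epsilon>h\}$, with $f_\epsilon(x):=\liminf_n-\tfrac{1}{n}\log\mu(B_n(x,\epsilon))$, increase to $\{h_\mu(T,\cdot)>h\}$ as $\epsilon\downarrow 0$, and the latter has positive $\mu$-measure (otherwise $\int h_\mu(T,\cdot)\,d\mu\le h$). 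Choose $\epsilon>0$ small enough that $\mu(\{f_\epsilon>h\})\ge 2c$ for some $c>0$; Egorov then yields an integer $N$ and a set $A\subseteq Z$ with $\mu(A)\ge c$ on which $\mu(B_n(x,\epsilon))\le e^{-nh}$ for every $n\ge N$. Given any cover $\Gamma=\{B_{n_i}(x_i,\epsilon/2)\}_i$ of $Z$ with $n(\Gamma)\ge N$, I discard balls disjoint from $A$ and pick $y_i\in A\cap B_{n_i}(x_i,\epsilon/2)$ in the rest, so that $B_{n_i}(x_i,\epsilon/2)\subseteq B_{n_i}(y_i,\epsilon)$ and $c\le\mu(A)\le\sum_i\mu(B_{n_i}(y_i,\epsilon))\le\sum_i e^{-n_i h}$. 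This forces $M(Z,h,N,\epsilon/2)\ge c$ for every $N$, hence $\htop(T,Z,\epsilon/2)\ge h$; sending $\epsilon\to 0$ and $h\uparrow h_\mu(T)$ finishes the argument.

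The main difficulty is the Egorov-style uniformization in (3): Brin--Katok only supplies pointwise estimates with thresholds $N$ and radii $\epsilon$ depending on $x$, so a controlled amount of $\mu$-mass must be sacrificed to obtain uniform control on a set $A$ of positive measure, and the intersection with $Z$ is precisely where the hypothesis $\mu(Z)=1$ enters. A minor subtlety is the halved radius $\epsilon/2$, needed so that recentering balls at points of $A$ keeps them inside the Brin--Katok controlled radius $\epsilon$; this halving is harmless since it disappears in the final $\epsilon\to 0$ limit.
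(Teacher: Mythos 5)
The paper does not prove Proposition~\ref{pro-2.1} at all; it is attributed outright to Bowen~\cite{Bow73} and Pesin~\cite{Pes-book}, so there is no in-paper argument to compare against. Your proofs of (1) and (2) are the standard Carath\'eodory-construction manipulations (monotonicity of the outer-measure-type quantity $M$, and a $\sum_i \eta 2^{-i}$ patching of near-zero covers for the countable union) and they are correct. For (3) you take the modern route through the Brin--Katok local entropy formula, using the fact that $\int h_\mu(T,\cdot)\,d\mu = h_\mu(T)$ to guarantee that $\{h_\mu(T,\cdot)>h\}$ has positive measure for each $h<h_\mu(T)$; this neatly handles non-ergodic $\mu$ in one stroke, without first proving the ergodic case and then passing through the ergodic decomposition. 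The recentering trick ($B_{n_i}(x_i,\epsilon/2)\subseteq B_{n_i}(y_i,\epsilon)$ for $y_i\in A\cap B_{n_i}(x_i,\epsilon/2)$) and the conclusion $M(Z,h,N,\epsilon/2)\ge c$ are exactly right. This is genuinely different from Bowen's original 1973 argument, which predates Brin--Katok and proceeds via the Shannon--McMillan--Breiman theorem for finite partitions together with a Lebesgue-number comparison between dynamical balls and refined cylinders; the Brin--Katok route (which is essentially Pesin's presentation) trades that partition bookkeeping for a single clean pointwise statement about $\mu(B_n(x,\epsilon))$. Two minor points worth making explicit if you were to write this up formally: what you call ``Egorov'' is really just continuity of $\mu$ from below applied to the increasing family $E_N=\{x:\mu(B_n(x,\epsilon))\le e^{-nh}\ \forall n\ge N\}$, and one should at least remark that $x\mapsto\mu(B_n(x,\epsilon))$ is Borel (lower semi-continuous, since $B_n(x,\epsilon)$ is open) so that the sets $E_N$ are measurable. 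Neither affects correctness.
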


\subsection{Topological pressure}
\label{S-2.2}
Let $T:\ X\to X$ be a continuous transformation of a compact
metric space $(X,d)$.  For any $n\in \N$, define the metric $d_n$
as in (\ref{e-2.1}). For any $\epsilon>0$, a set $E\subseteq X$ is
said to be a {\it $(n,\epsilon)$-separated subset of $X$}  if
$d_n(x,y)>\epsilon$ for any two different points $x,y\in E$. Let
$\Phi=\{\log \phi_n(x)\}_{n=1}^\infty$ be a sequence of functions on $X$ for which $\phi_n$ is non-negative for each $n$.  We define
$$
P_n(T, \Phi,\epsilon) =\sup\left\{\sum_{x\in E}\phi_n(x):\; E \mbox{ is
a } (n,\epsilon) \mbox{-separated subset of } X\right\}.
$$
It is clear that $P_n(T, \Phi,\epsilon)$ is a decreasing function of
$\epsilon$. Define
$$
P(T,\Phi,\epsilon)=\limsup_{n\to \infty}\frac{1}{n}\log
P_n(T,\Phi,\epsilon)
$$
and $P(T,\Phi)=\lim_{\epsilon\to 0}P(T,\Phi,\epsilon)$. We call  $P(T,\Phi)$
 the {\it topological pressure of $\Phi$ with respect to
$T$} or, simply, the {\it topological pressure of $\Phi$}.

\subsection{Subdifferentials of convex functions}\label{S-2.3}
We first give some notation and basic facts in convex analysis. For details, one is referred to \cite{HiLe01, Roc70}.

 By a {\it convex combination} of points ${\bf x}_1$, $\ldots$, ${\bf
x}_m\in \mathbb{R}^k$ we mean a linear combination $\sum_{i=1}^m
\lambda_i{\bf x}_i$, where $\lambda_1+\cdots+\lambda_m=1$ and
$\lambda_1,\ldots,\lambda_m\ge 0$. For any subset $M$ of
$\R^k$, the {\it convex hull}  ${\rm conv}(M)$ of $M$ is the set of all convex
combinations of points from $M$. Carath\'{e}odory's Theorem says that for any
subset $M$ of $\mathbb{R}^k$, the convex hull ${\rm conv}(M)$ is the set
of all convex combinations of $k+1$ points from $M$ (cf. \cite[Theorem 17.1]{Roc70}).

Let $C$ be a convex subset of $\mathbb{R}^k$. A point ${\bf x}\in C$
is called an {\it extreme point} of $C$ if ${\bf x}=p{\bf y}+(1-p){\bf z}$ for some ${\bf y}, {\bf z}\in
C$ and $0<p<1$, then ${\bf
x}={\bf y}={\bf z}$. The set of extreme points of $C$ is denoted by
${\rm ext}(C)$.  Minkowski's Theorem says that for any  non-empty compact
convex subset $C$ of $\mathbb{R}^k$,  $C={\rm conv}({\rm ext}(C))$ (cf. \cite[Theorem 2.3.4]{HiLe01} or
\cite[Corollary 18.5.1]{Roc70}). Hence, according to Carath\'{e}odory's Theorem and Minkowski's Theorem,  each
point in a compact convex set $C\subset \R^k$ is a convex combination of $k+1$ points from ${\rm ext}(C)$.

A point ${\bf x}\in C$ is called {\it exposed  point} of $C$, if
$\{\bf x\}$ is the intersection of $C$ with some supporting
hyperplane of $C$. The set of all exposed points of $C$ will be
denoted by ${\rm expo}(C)$. Straszewicz' Theorem says for  any compact convex set $C$ in $\mathbb{R}^k$,
 ${\rm expo}(C)$ is a dense subset of
${\rm ext}(C)$ (cf. \cite[Theorem 18.6]{Roc70}).

Let $U$ be an open convex subset of $\mathbb{R}^k$ and $f$ be a real
continuous convex function on $U$. For ${\bf x}\in U$,
${\bf a}\in \mathbb{R}^k$ is called a {\it subgradient}
of $f$ at  ${\bf x}$, if for any ${\bf y}\in U$ one has
$$f({\bf y})-f({\bf x})\ge {\bf a} \cdot ({\bf y}-{\bf x}),$$
where the dot denotes the dot product. The set of all subgradients
at ${\bf x}$ is called the {\it subdifferential} of $f$ at ${\bf x}$
and is denoted $\partial f({\bf x})$. For ${\bf x}\in U$, the subdifferential $\partial f({\bf x})$ is always
a nonempty convex compact set (cf. \cite[Theorem 23.4]{Roc70}). Write
$$\partial^e f({\bf x})={\rm ext}(\partial f({\bf x})).$$
When $\partial^e f({\bf x})=\{\bf a\}$, we say that $f$ is
{\it differentiable} at ${\bf x}$ and write $f'({\bf
x})=\a$. It is known that  $f$ is differentiable
for almost every ${\bf x}\in U$ (cf. \cite[Theorem 4.2.3]{HiLe01}). In the case $k=1$,  $\partial f( x)=[f'(x-),f'(x+)]$ and $\partial^e f( x)=\{f'(x-),f'(x+)\}$.

Next we define
\begin{equation}
\label{e-0330}
\partial f(U)=\bigcup_{{\bf x}\in U} \partial f({\bf x}) \quad \text{ and } \quad
\partial^e f(U)=\bigcup_{{\bf x}\in U} \partial^e f({\bf x}).
\end{equation}

\begin{pro}\label{lem-key2.2} Let $U$ be an open convex subset of $\mathbb{R}^k$
and $f$ be a real continuous convex function on $U$. Then for each
 ${\bf x}\in U$ and $\a\in \partial^e f({\bf x})$, there
exists a sequence $({\bf x}_n)\subset U$  such that  $\lim
\limits_{n\rightarrow \infty} {\bf x}_n={\bf x}$,  $f$ is
differentiable at each point ${\bf x}_n$ and $\a=\lim
\limits_{n\rightarrow \infty} f'({\bf x}_n)$.
\end{pro}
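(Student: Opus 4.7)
The plan is to first prove the conclusion when $\a$ is an \emph{exposed} point of $\partial f({\bf x})$, and then upgrade to arbitrary extreme points via Straszewicz' theorem \cite[Theorem 18.6]{Roc70} (already cited just before the statement). Two background facts are used throughout: first, the continuous convex function $f$ is locally Lipschitz on $U$, so subgradients are locally uniformly bounded and the set-valued map $\partial f$ is outer semicontinuous (any limit of subgradients at points converging to ${\bf x}$ is a subgradient at ${\bf x}$, by passing to the limit in the subgradient inequality); second, the set $D\subseteq U$ of differentiability points of $f$ has full Lebesgue measure in $U$, hence is dense.

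Suppose first that $\a\in{\rm expo}(\partial f({\bf x}))$, with exposing direction ${\bf v}\in\R^k$: $\a$ is the \emph{unique} maximizer of ${\bf c}\mapsto{\bf c}\cdot{\bf v}$ on $\partial f({\bf x})$. For each small $t>0$, choose ${\bf y}_t\in D$ with $|{\bf y}_t-({\bf x}+t{\bf v})|\le t^2$; such a choice is possible since $D$ is dense. I claim $f'({\bf y}_t)\to\a$ as $t\downarrow 0$. The local Lipschitz bound gives $\|f'({\bf y}_t)\|\le L$ for $t$ small, so it is enough to show that every subsequential limit ${\bf b}=\lim_n f'({\bf y}_{t_n})$ equals $\a$. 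Outer semicontinuity gives ${\bf b}\in\partial f({\bf x})$. To pin down ${\bf b}\cdot{\bf v}$, substitute ${\bf z}={\bf x}$ into the subgradient inequality $f({\bf z})\ge f({\bf y}_{t_n})+f'({\bf y}_{t_n})\cdot({\bf z}-{\bf y}_{t_n})$, divide by $t_n$, and use ${\bf y}_{t_n}-{\bf x}=t_n{\bf v}+O(t_n^2)$ together with the Lipschitz estimate $|f({\bf y}_{t_n})-f({\bf x}+t_n{\bf v})|=O(t_n^2)$ to obtain
\[
f'({\bf y}_{t_n})\cdot{\bf v}+O(t_n)\;\ge\;\frac{f({\bf x}+t_n{\bf v})-f({\bf x})}{t_n}+O(t_n).
\]
Letting $n\to\infty$, the right-hand side tends to the one-sided directional derivative $f'({\bf x};{\bf v})=\max\{{\bf c}\cdot{\bf v}:{\bf c}\in\partial f({\bf x})\}=\a\cdot{\bf v}$, whence ${\bf b}\cdot{\bf v}\ge\a\cdot{\bf v}$. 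Uniqueness of $\a$ as maximizer then forces ${\bf b}=\a$.

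For a general $\a\in\partial^e f({\bf x})={\rm ext}(\partial f({\bf x}))$, Straszewicz' theorem yields exposed points $\a_m\in{\rm expo}(\partial f({\bf x}))$ with $\a_m\to\a$. The exposed case produces, for each $m$, a sequence $({\bf x}_{m,n})_n\subset D$ with ${\bf x}_{m,n}\to{\bf x}$ and $f'({\bf x}_{m,n})\to\a_m$. A standard diagonal extraction ${\bf x}_m:={\bf x}_{m,n(m)}$ chosen so that $|{\bf x}_m-{\bf x}|<1/m$ and $|f'({\bf x}_m)-\a_m|<1/m$ then delivers the desired sequence. The main obstacle is the estimate in the exposed case: the perturbation ${\bf y}_t-({\bf x}+t{\bf v})$ must be arranged to be $o(t)$ (the choice $t^2$ is convenient) so that, after dividing the subgradient inequality by $t$, it contributes only a vanishing error while the leading term $f'({\bf y}_t)\cdot{\bf v}$ survives and can be compared against $\a\cdot{\bf v}=f'({\bf x};{\bf v})$; the remainder of the argument is then a clean use of uniqueness in the definition of exposed point.
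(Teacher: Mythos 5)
Your proposal is correct and follows essentially the same route as the paper's proof: reduce to exposed points via Straszewicz, approach ${\bf x}$ along the exposing direction through differentiability points with an $o(t)$ perturbation, use outer semicontinuity to land in $\partial f({\bf x})$, and use the exposing inequality to force the limit to be $\a$. The only cosmetic difference is that you invoke the max formula $f'({\bf x};{\bf v})=\max\{{\bf c}\cdot{\bf v}:{\bf c}\in\partial f({\bf x})\}$ for the lower bound, whereas the paper obtains the same bound directly from the subgradient inequality for $\a$ at ${\bf x}$ evaluated at ${\bf x}_n$.
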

\begin{proof} Let ${\bf x}\in U$. Since ${\rm expo}(\partial f(x))$ is
dense in $\partial ^ef(x)$, we only need to show that the lemma holds when $\a \in {\rm expo}(\partial f({\bf x}))$. Fix  such an $\a$ and write  $\a=(a_1,\cdots, a_k)$. Then there exists a non-zero vector ${\bf
t}=(t_1,\cdots,t_k)\in \mathbb{R}^k$ such that
\begin{equation}\label{eq-unique}
{\bf t}\cdot {\bf b}<{\bf t}\cdot \a
\quad \text{ for any } {\bf b}\in \partial f({\bf x})\setminus
\{\a\}.
\end{equation}

Since $f$ is differentiable almost every on $U$, there
exists a sequence $({\bf x}_n)\in U$  such that $\lim
\limits_{n\rightarrow \infty} {\bf x}_n={\bf x}$, $f$ is
differentiable at each ${\bf x}_n$ and
\begin{equation}\label{e-2.2u} |{\bf x}_n-({\bf x}+{\bf
t}/n)|<n^{-2}\quad \mbox{for all }  n\in \mathbb{N}.
\end{equation}
 Write
${\bf a}_n=f'({\bf x}_n)$. Note that the sequence
$({\bf a}_n)$ is bounded because of the boundedness of $(x_n)$. Hence by taking a subsequence if it is necessary, we can assume that  $\lim \limits_{n\rightarrow
\infty}{\bf a}_n=\a'$ for some
$\a'\in \mathbb{R}^k$. In the following we show that $\a'=\a$.

Since ${\bf a}_n=f'({\bf x}_n)$,
 one has
$$
f({\bf z})-f({\bf x}_n)\ge {\bf a}_n\cdot ({\bf z}-{\bf x}_n)\quad \mbox{ for any } {\bf z}\in U.
$$
 Letting $n\rightarrow \infty$ yields
$f({\bf z})-f({\bf x})\ge \a'\cdot ({\bf z}-{\bf x})$ for any ${\bf z}\in U$,
which implies $\a'\in \partial f({\bf x})$.
Meanwhile for each $n\in \N$,
$$f({\bf x})-f({\bf x}_n)\ge {\bf a}_n \cdot ({\bf x}-{\bf x}_n)
\quad \text{ and }\quad f({\bf x}_n)-f({\bf x})\ge \a\cdot ({\bf
x}_n-{\bf x}).
$$
Hence $\a_n\cdot ({\bf x}_n-{\bf x})\ge
f({\bf x}_n)-f({\bf x})\ge \a\cdot ({\bf x}_n-{\bf
x})$.  That is, $$\a_n\cdot ({\bf t}+n{\bf
w}_n)\ge \a\cdot ({\bf t}+n{\bf w}_n),$$
where
${\bf w}_n:={\bf x}_{n}-({\bf
x}+{\bf t}/{n})$. Taking $n\to \infty$ and noting that
  $\lim \limits_{n\rightarrow \infty} n|{\bf
w}_n|=0$ (by (\ref{e-2.2u})), we have
$$\a'\cdot {\bf t}\ge \a \cdot {\bf t}.$$
Combining  it with \eqref{eq-unique} and the fact
$\a'\in \partial f({\bf x})$, one has
$\a'=\a$. This finishes the proof of the proposition.
\end{proof}

\begin{pro}
\label{pro-2.3}
Let $Y$ be a compact convex subset of a topological vector space which satisfies the first axiom of countability
(i.e., there is a countable base at each point)  and $U\subseteq \R^k$ a non-empty open convex set. Suppose $f:\; U\times Y\to \R\cup\{-\infty\}$
is a map satisfying the following  conditions:
\begin{itemize}
\item[(i)] $f(\bq,y)$ is convex in $\bq$;
\item[(ii)] $f(\bq,y)$ is affine in $y$;
\item[(iii)] $f$ is upper semi-continuous over $U\times Y$;
\item[(iv)] $g(\bq):=\sup_{y\in Y}f(\bq,y)>-\infty$ for any $\bq\in U$.
\end{itemize}
For each $\bq\in U$, denote $
\I(\bq):=\{y\in Y:\; f(\bq,y)=g(\bq)\}.
$
Then
\begin{equation}
\label{e-1504}
\partial g(\bq)=\bigcup_{y\in \I(\bq)}\partial f(\bq,y),
\end{equation}
where $\partial f(\bq,y)$ denotes the subdifferential of $f(\cdot,y)$ at $\bq$.
\end{pro}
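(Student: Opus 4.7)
The plan is to verify the two inclusions in (\ref{e-1504}) separately. First, by (iii) and compactness of $Y$, the map $y\mapsto f(\bq,y)$ is upper semi-continuous on the compact set $Y$ and attains its supremum, so $\I(\bq)\ne\emptyset$; moreover $g=\sup_{y\in Y}f(\cdot,y)$ is a convex real-valued function on the open convex set $U$ (finiteness from (iii) and (iv)), hence continuous. The inclusion $\supseteq$ is immediate: for $y\in\I(\bq)$ and $\a\in\partial f(\bq,y)$ one has $g(\bq')\ge f(\bq',y)\ge f(\bq,y)+\a\cdot(\bq'-\bq)=g(\bq)+\a\cdot(\bq'-\bq)$ for every $\bq'\in U$.

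For the reverse inclusion, fix $\a\in\partial g(\bq)$ and, for each $\bq'\in U$, set
\[
A_{\bq'}=\{y\in Y : f(\bq',y)\ge g(\bq)+\a\cdot(\bq'-\bq)\}.
\]
By (iii), $A_{\bq'}$ is closed in $Y$, and by (ii) it is convex; moreover $A_\bq=\I(\bq)$. Any $y^*\in\bigcap_{\bq'\in U}A_{\bq'}$ automatically lies in $\I(\bq)$, makes $f(\cdot,y^*)$ real-valued on $U$ (from the built-in lower bound), and satisfies $f(\bq',y^*)\ge f(\bq,y^*)+\a\cdot(\bq'-\bq)$ for all $\bq'\in U$, i.e.\ $\a\in\partial f(\bq,y^*)$. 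Thus it suffices to show this intersection is nonempty, which by compactness of $Y$ reduces to the finite intersection property for the family $\{A_{\bq'}\}_{\bq'\in U}$.

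For the FIP, I would argue by contradiction: suppose $A_{\bq_1}\cap\cdots\cap A_{\bq_n}=\emptyset$ for some finite collection, and put $c_i=g(\bq)+\a\cdot(\bq_i-\bq)$. The plan is to apply Sion's minimax theorem (or equivalently a Hahn--Banach separation argument in $\R^n$) to the function $\Psi(\lambda,y)=\sum_{i=1}^n \lambda_i(c_i-f(\bq_i,y))$ on the compact convex product $\Delta_n\times Y$, where $\Delta_n$ denotes the standard simplex: $\Psi$ is affine and continuous in $\lambda$, and affine and lower semi-continuous in $y$ (since each $-f(\bq_i,\cdot)$ is LSC by (iii)). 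Emptiness of the intersection says $\max_i(c_i-f(\bq_i,y))>0$ for every $y$, and since this max is LSC on compact $Y$, its infimum is strictly positive. Minimax then produces $\lambda^*\in\Delta_n$ with $\sup_{y\in Y}\sum_i\lambda_i^* f(\bq_i,y)<\sum_i\lambda_i^* c_i$. Setting $\bar\bq=\sum_i\lambda_i^*\bq_i\in U$ and using convexity of $f$ in $\bq$ (condition (i)) yields $g(\bar\bq)\le\sup_y\sum_i\lambda_i^* f(\bq_i,y)<\sum_i\lambda_i^* c_i=g(\bq)+\a\cdot(\bar\bq-\bq)$, contradicting $\a\in\partial g(\bq)$. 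The main obstacle is precisely this FIP step: one has to aggregate finitely many pointwise constraints into a single convex combination $\bar\bq$ at which convexity of $f$ in $\bq$ can violate $\a\in\partial g(\bq)$, which is exactly the role played by Sion's theorem; mild extra care is needed because $f$ may take the value $-\infty$, but this is absorbed by LSC of $-f(\bq_i,\cdot)$ in $y$ and by the fact that any $y^*$ in the final intersection automatically has $f(\bq',y^*)>-\infty$ on all of $U$.
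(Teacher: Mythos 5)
Your proof takes a genuinely different route from the paper's, and the core idea is sound. The paper works in the target space $\R^k$: writing $R(\bq)$ for the right-hand side of \eqref{e-1504}, it proves $R(\bq)\subseteq\partial g(\bq)$ directly (same as your $\supseteq$ direction), then establishes the reverse inclusion by showing $R(\cdot)$ is compact-valued and outer semi-continuous at $\bq$, and finally separating a hypothetical $\a\in\partial g(\bq)\setminus R(\bq)$ from $R(\bq)$ by a hyperplane in $\R^k$. That route extracts convergent subsequences from $Y$, which is precisely where the first-countability hypothesis is used. Your argument instead produces the witness $y^*\in\I(\bq)$ with $\a\in\partial f(\bq,y^*)$ directly, by intersecting the closed constraint sets $A_{\bq'}\subseteq Y$ via the finite-intersection property, disposing of the finite case with a minimax/separation argument in $\R^n$. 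This Lagrangian-duality flavour has a small bonus: it uses only open-cover compactness of $Y$, so first-countability becomes superfluous for this proposition.

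There is, however, one point that needs more care, and your closing remark about $-\infty$ does not quite hit it. The concern is not the final $y^*$ (for which your observation is fine) but the FIP step itself: if $f(\bq_j,y)=-\infty$ for some $j$ while $f(\bq_i,y)>-\infty$ for the other indices, then $\lambda\mapsto\Psi(\lambda,y)$ is lower but not upper semi-continuous on $\Delta_n$ (finite on the face $\{\lambda_j=0\}$, $+\infty$ off it), whereas Sion's theorem requires upper semi-continuity in the maximizing variable; correspondingly, the separation route yields no bound on $\sum_i\lambda_i^*f(\bq_i,y)$ at such $y$ if the bad index has $\lambda_j^*=0$. The fix is the standard fact that a convex function on an open convex set which takes the value $-\infty$ anywhere equals $-\infty$ identically there: if $f(\bq_0,y)=-\infty$ for a single $\bq_0\in U$, write any $\bq'\in U$ as $(1-\mu)\bq_0+\mu\bq''$ with $\bq''\in U$ and $0<\mu<1$ to conclude $f(\bq',y)=-\infty$. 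Hence the ``bad'' $y$ form a set $Y\setminus Y_0$ on which $f(\bq',\cdot)\equiv-\infty$ for all $\bq'\in U$; such $y$ never contribute to any supremum $\sup_y f(\bq',y)$, so the separation step may be run over $Y_0$ alone, while the positive lower bound $\epsilon$ still comes from lower semi-continuity of $\max_i(c_i-f(\bq_i,\cdot))$ on all of the compact $Y$ (at $y\notin Y_0$ this maximum equals $+\infty$). With that observation in place your proof is complete.
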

\begin{proof}
By (i)-(iv), $g$ is a real convex function over $U$, and $\I(\bq)$ is a non-empty compact convex subset of $Y$
for each $\bq\in U$.  For convenience, denote by $R(\bq)$ the righthand side of (\ref{e-1504}). A direct check shows
that $R(\bq)$ is a non-empty convex subset of $\R^k$ for each $\bq\in U$. We further show that for each $\bq\in U$,
\begin{itemize}
\item[(c1)] $R(\bq)$ is compact;
\item[(c2)] For each $\delta>0$, there exists $\gamma>0$ such that
$$
R(\bt)\subseteq B_\delta(R(\bq))
\quad \mbox{ whenever }\bt \in U,\; |\bt-\bq|<\gamma,
$$
where  $B_\delta(R(\bq)):=\{{\bf b}\in \R^k:\; d({\bf b},R(\bq))\leq \delta\}$ is the closed $\delta$-neighborhood of $R(\bq)$ in $\R^k$.
\end{itemize}
To show (c1), let $(\a_n)$ be a sequence in $R(\bq)$. Take $y_n\in \I(\bq)$ so that $\a_n\in \partial f(\bq,y_n)$.  Then
\begin{equation}\label{e-1504a}
f(\bt,y_n)-g(\bq)=f(\bt,y_n)-f(\bq,y_n)\geq \a_n\cdot (\bt-\bq)
\end{equation}
for each $\bt\in U$. Hence the sequence $(\a_n)$ should be bounded (otherwise, there exists $\bt\in U$ such that $\a_n\cdot (\bt-\bq)$ is unbounded from above,
however $f(\bt,y_n)-f(\bq,y_n)=f(\bt,y_n)-g(\bq)\leq g(\bt)-g(\bq)$).   Taking a subsequence if necessary, we assume
that $y_n\to y$ and $\a_n\to \a$ for some $y\in \I(\bq)$ and $\a\in \R^k$.
Since $f(\bt,\cdot)$ is upper semi-continuous, by (\ref{e-1504a}) we have
$f(\bt,y)-f(\bq,y)=f(\bt,y)-g(\bq)\geq \a\cdot(\bt-\bq)$ for each $\bt \in U$. This shows $\a\in R(\bq)$
and hence $R(\bq)$ is compact. To show (c2), we use contradiction. Assume that (c2) does not hold.
Then there exist $\delta>0$ and a sequence
$(\bt_n)$ in $U$ with $\lim_{n\to \infty}\bt_n=\bq$ such that there exists $\a_n\in R(\bt_n)$ satisfying $d(\a_n,R(\bq))>\delta$
for each $n$. Take $y_n\in \I(\bt_n)$ so that $\a_n\in \partial f(\bt_n,y_n)$.  Then we have
\begin{equation}\label{e-1504b}
f(\bt,y_n)-g(\bt_n)=f(\bt,y_n)-f(\bt_n,y_n)\geq \a_n\cdot (\bt-\bt_n)
\end{equation}
for each $\bt\in U$. Similarly we can show that $\a_n$ is bounded. Hence by taking a subsequence if necessary, we can assume
that $y_n\to y$ and $\a_n\to \a$ for some $y\in Y$ and $\a\in \R^k$. By the upper semi-continuity of $f$ and
the continuity of $g$, we have
$$
f(\bq,y)\geq \limsup_{n\to \infty}f(\bt_n,y_n)=\limsup_{n\to \infty} g(\bt_n)=g(\bq),
$$
which implies $y\in \I(\bq)$. Hence taking  $n\to \infty$ in (\ref{e-1504b}) yields
 $$
 f(\bt,y)-f(\bq,y)=f(\bt,y)-g(\bq)\geq \limsup_{n\to \infty} (f(\bt,y_n)-g(\bt_n))\geq \a\cdot (\bt-\bq)
 $$
 for each $\bt\in U$. Hence $\a\in \partial f(\bq,y)$. Thus $\a\in R(\bq)$, which contradicts the assumption that
 $d(\a_n,R(\bq))>\delta$ for all $n$. This finishes the proof of (c2).

  Now we are ready to show (\ref{e-1504}), i.e., $\partial g(\bq)=R(\bq)$. For each $\a\in R(\bq)$,
  there exists $y\in \I(\bq)$ so that $\a\in \partial f(\bq,y)$. Hence for each $\bt\in U$,
\begin{equation}
\label{e-1504c}
 g(\bt)-g(\bq)\geq  f(\bt,y)-g(\bq)=f(\bt,y)-f(\bq,y)\geq \a\cdot (\bt-\bq).
  \end{equation}
This implies $\a\in \partial g(\bq)$ and thus $\partial g(\bq)\supseteq R(\bq)$.

In the end, we show that $\partial g(\bq)\subseteq R(\bq)$ by contradiction.
Assume that $\a\in \partial g(\bq)\setminus R(\bq)$.
Since $R(\bt)\subseteq \partial g(\bt)$, we have
\begin{equation}
\label{e-1504d}
g(\bt)-g(\bq)\leq \b\cdot (\bt-\bq),\qquad \forall \; \bt\in U,\; \b\in R(\bt).
\end{equation}
Note that  $\a\in \partial g(\bq)$. We have $g(\bt)-g(\bq)\geq \a\cdot (\bt-\bq)$ for all $\bt\in U$. This combining (\ref{e-1504d}) yields
\begin{equation}
\label{e-1504e}
\a\cdot (\bt-\bq)\leq \b\cdot (\bt-\bq),\qquad \forall \; \bt\in U,\; \b\in R(\bt).
\end{equation}
Since $\a\not\in R(\bq)$ and $R(\bq)$ is compact, there exists $\delta>0$ so that $\a\not\in B_\delta(R(\bq))$. Notice that
 $B_\delta(R(\bq))$ is compact convex (since so is  $R(\bq)$), there exists a vector ${\bf e}\in \R^k$ such that $|{\bf e}|=1$ and
 $\a\cdot {\bf e}>\b\cdot {\bf e}$ for any $\b\in B_\delta(R(\bq))$. By (c2), there exists $\gamma>0$ such that
 $R(\bt)\subseteq B_\delta(R(\bq))$ whenever $|\bt-\bq|\leq \gamma$.    Take a small $0<\tilde{\gamma}<\gamma$ such that
 $\bt_0:=\bq+(\tilde{\gamma}/2){\bf e}\in U$. Then $\a\cdot (\bt_0-\bq)>\b\cdot (\bt_0-\bq)$ for any $\b\in R(\bt_0)$, which
 contradicts (\ref{e-1504e}). This proves $\partial g(\bq)\subseteq R(\bq)$.
\end{proof}
\subsection{Conjugates of convex functions}
\label{S-2.4}

Let $f:\R^k\to \R\cup\{+\infty\}$ be convex and not identically equal to $+\infty$. Then the function $f^*:\; \R^k\to \R\cup\{+\infty\}$ defined by
$$
{\bf s}\mapsto f^*({\bf s}):=\sup\{{\bf s}\cdot {\bf x}-f({\bf x}):\; x\in \R^k\}
$$
is called the {\it conjugate function} of $f$ or {\it Legendre
transform} of $f$. It is known that $f^*$ is also convex and not
identically equal to $+\infty$ (cf. \cite[p. 211]{HiLe01}). Let
$f^{**}$ denote the conjugate of $f^*$. The following  result is
well known (cf. \cite[Theorem 12.2]{Roc70}).
\begin{thm}
\label{thm-R}
Let $f:\R^k\to \R\cup\{+\infty\}$ be convex and not identically equal to $+\infty$. Let ${\bf x}\in \R^k$. Assume that $f$ is lower semi-continuous
at ${\bf x}$, i.e., $\liminf_{{\bf y}\to {\bf x}}f({\bf y})\geq f({\bf x})$.  Then $f^{**}({\bf x})= f({\bf x})$.
\end{thm}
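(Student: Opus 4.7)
The plan is to prove Theorem \ref{thm-R} (a pointwise form of the Fenchel--Moreau biconjugate theorem) by a Hahn--Banach separation argument applied to the epigraph $\mathrm{epi}(f)=\{(y,t)\in\R^k\times\R:\, t\ge f(y)\}$. The inequality $f^{**}(x)\le f(x)$ is automatic from the definitions: for every $s\in\R^k$ one has $f^*(s)\ge s\cdot x-f(x)$, so $s\cdot x-f^*(s)\le f(x)$, and taking the supremum over $s$ yields the bound. All the work is in the reverse inequality $f^{**}(x)\ge f(x)$, for which I would fix an arbitrary real $\alpha<f(x)$ (allowing $\alpha$ arbitrarily large if $f(x)=+\infty$) and aim to produce some $s\in\R^k$ with $s\cdot x-f^*(s)>\alpha$.

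First I would observe that the hypothesis of lower semi-continuity at $x$ is exactly what forces $(x,\alpha)\notin\overline{\mathrm{epi}(f)}$: if there were $(y_n,t_n)\in\mathrm{epi}(f)$ with $(y_n,t_n)\to(x,\alpha)$, then $t_n\ge f(y_n)$ would give $\alpha=\lim t_n\ge\liminf_{y\to x}f(y)\ge f(x)$, contradicting $\alpha<f(x)$. Since $\mathrm{epi}(f)$ is convex (convexity of $f$) and $\overline{\mathrm{epi}(f)}$ is a closed convex subset of $\R^{k+1}$, I can strictly separate it from the singleton $\{(x,\alpha)\}$: there exist $(s,-\beta)\in\R^k\times\R$, not both zero, and $c\in\R$ with
\[
s\cdot x-\beta\alpha\;>\;c\;\ge\;s\cdot y-\beta t\qquad\text{for every }(y,t)\in\mathrm{epi}(f).
\]
Fixing any $y_0\in\mathrm{dom}(f):=\{y:f(y)<\infty\}$ (which exists because $f$ is proper) and letting $t=f(y_0)+M\to+\infty$ in the inequality forces $\beta\ge 0$.

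Next I would split into the two cases. If $\beta>0$, rescale to $\beta=1$; taking $t=f(y)$ for $y\in\mathrm{dom}(f)$ gives $s\cdot y-f(y)\le c$, hence $f^*(s)\le c$, and therefore
\[
f^{**}(x)\;\ge\;s\cdot x-f^*(s)\;\ge\;s\cdot x-c\;>\;\alpha,
\]
as desired. Note that if $f(x)<\infty$ then $x\in\mathrm{dom}(f)$ makes the case $\beta=0$ impossible, because then $s\cdot x\le c<s\cdot x$; so the remaining issue is exclusively the case $f(x)=+\infty$, $\beta=0$, where the separating functional only separates $x$ from $\mathrm{dom}(f)$ as a subset of $\R^k$.

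This last case is the main obstacle and needs a perturbation trick. With $\beta=0$ I have $s\cdot y\le c<s\cdot x$ for all $y\in\mathrm{dom}(f)$. Before exploiting this, I would first secure the existence of some $s_0\in\R^k$ with $f^*(s_0)<\infty$, i.e., an affine minorant of $f$. This is the point where lsc at $x$ really enters for $f(x)=+\infty$: applying the separation argument already carried out to any $(y_1,\alpha_1)\in\R^{k+1}\setminus\overline{\mathrm{epi}(f)}$ with $f$ lsc at $y_1$ and $f(y_1)<\infty$ produces such an $s_0$; if no such $y_1$ exists one argues directly using lsc at $x$ and convexity to rule out $\mathrm{cl}\,f\equiv-\infty$, so that an affine minorant still exists. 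Once $s_0$ is in hand, for each $\lambda>0$ and each $y\in\mathrm{dom}(f)$ one has
\[
(s_0+\lambda s)\cdot y-f(y)\;=\;\bigl(s_0\cdot y-f(y)\bigr)+\lambda\,s\cdot y\;\le\;f^*(s_0)+\lambda c,
\]
so $f^*(s_0+\lambda s)\le f^*(s_0)+\lambda c$, and consequently
\[
f^{**}(x)\;\ge\;(s_0+\lambda s)\cdot x-f^*(s_0+\lambda s)\;\ge\;\bigl(s_0\cdot x-f^*(s_0)\bigr)+\lambda\,(s\cdot x-c).
\]
Since $s\cdot x-c>0$, letting $\lambda\to\infty$ gives $f^{**}(x)=+\infty=f(x)$, completing the proof. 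The delicate points I would be most careful about are verifying $\beta\ge0$, ensuring the separating hyperplane is strict (for which I would invoke the standard strict separation between a point and a closed convex set in $\R^{k+1}$), and the existence of the auxiliary $s_0$ in the $\beta=0$ case.
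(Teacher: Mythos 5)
The paper does not prove Theorem \ref{thm-R}; it simply cites \cite[Theorem 12.2]{Roc70} and records the statement for later use. So there is no in-text proof to compare against. Your argument is the standard Fenchel--Moreau proof via strict separation of $(x,\alpha)$ from the closed convex set $\overline{\mathrm{epi}(f)}=\mathrm{epi}(\mathrm{cl}\,f)$, which is essentially the argument one finds in Rockafellar; it is a legitimate and natural route, and the overall structure (automatic inequality $f^{**}\le f$, strict separation for the converse, $\beta\ge 0$ from vertical rays, the harmless case $\beta>0$, and a perturbation of the form $s_0+\lambda s$ to handle $\beta=0$) is correct.

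The one place where the argument is wobblier than it should be is the production of $s_0$ with $f^*(s_0)<\infty$, and the stated rationale for it. You write that ``this is the point where lsc at $x$ really enters,'' but that is not so: lsc at $x$ has already been spent in showing $(x,\alpha)\notin\overline{\mathrm{epi}(f)}$, and the existence of an affine minorant is a general fact about proper convex functions on $\R^k$, independent of where $f$ happens to be lsc. You also set up a case split (``if no such $y_1$ exists \dots'') that is in fact vacuous. The clean fix is: take any $y_1\in\mathrm{ri}(\mathrm{dom}\,f)$. A finite-dimensional convex function is continuous on the relative interior of its domain, so $f$ is finite and lsc there, and $\mathrm{cl}\,f(y_1)=f(y_1)\in\R$; since $f$ proper implies $\mathrm{cl}\,f$ proper, the point $(y_1,f(y_1)-1)$ lies outside $\overline{\mathrm{epi}(f)}=\mathrm{epi}(\mathrm{cl}\,f)$. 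Running your separation at that point forces its $\beta_0$ to be strictly positive (because $y_1\in\mathrm{dom}(f)$, the same argument you use for $x$ when $f(x)<\infty$), and this yields $s_0$ with $f^*(s_0)<\infty$. With this step tightened, the proposal is a complete and correct proof of the cited theorem.
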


As an application, we have

\begin{cor}
\label{cor-0330}
Let $A$ be a non-empty convex set in $\R^k$ and $g:\; A\to \R$ be a concave function.
Set $$
W({\bf x})=\sup\{g(\a)+\a\cdot {\bf x}:\; \a\in A\},\qquad {\bf x}\in \R^k
$$
and
$$
G(\a)=\inf\{W({\bf x})-\a\cdot {\bf x}:\; {\bf x}\in \R^k\},\qquad \a\in \R^k.
$$
Then we have
 \begin{itemize}
\item[(i)]
$G(\a)=g(\a)$ for $\a\in {\rm ri}(A)$, where ${\rm ri}(A)$ denotes the relative interior of $A$.
\item[(ii)]
Assume in addition that  $A$ is closed. If $g$ is upper semi-continuous at $\a\in A$, then $G(\a)=g(\a)$.
\end{itemize}
\end{cor}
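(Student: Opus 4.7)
The plan is to reduce both parts to a single application of Theorem \ref{thm-R} by realizing $G$ as minus the biconjugate of a convex extension of $-g$.

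First I would define $f:\R^k\to\R\cup\{+\infty\}$ by $f(\a)=-g(\a)$ for $\a\in A$ and $f(\a)=+\infty$ otherwise. Since $A$ is convex and $g$ is concave, $f$ is convex; since $A$ is non-empty, $f$ is not identically $+\infty$. A direct unpacking of the definitions yields
$$f^*({\bf x})=\sup_{\a\in\R^k}({\bf x}\cdot\a-f(\a))=\sup_{\a\in A}({\bf x}\cdot\a+g(\a))=W({\bf x}),$$
and therefore
$$f^{**}(\a)=\sup_{{\bf x}\in\R^k}(\a\cdot{\bf x}-W({\bf x}))=-\inf_{{\bf x}\in\R^k}(W({\bf x})-\a\cdot{\bf x})=-G(\a).$$
Consequently, at any $\a\in A$ where $f$ is lower semi-continuous, Theorem \ref{thm-R} yields $-G(\a)=f^{**}(\a)=f(\a)=-g(\a)$, proving $G(\a)=g(\a)$.

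It remains only to verify lower semi-continuity of $f$ at the relevant points. For (ii), upper semi-continuity of $g$ at $\a\in A$ gives $\liminf_{\a_n\to\a,\ \a_n\in A}(-g(\a_n))\geq-g(\a)=f(\a)$, while any sequence with $\a_n\notin A$ contributes $f(\a_n)=+\infty$ to the liminf; together these give $\liminf_{\a_n\to\a}f(\a_n)\geq f(\a)$. (The closedness of $A$ is a natural accompanying hypothesis but is not strictly needed in this verification.) For (i), I would invoke the classical fact that a concave function on a convex set is continuous on its relative interior relative to $A$, so that $g(\a_n)\to g(\a)$ for any $\a_n\in A$ with $\a_n\to\a$; the case $\a_n\notin A$ is again trivial. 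Since $\a\in\mathrm{ri}(A)\subseteq A$, Theorem \ref{thm-R} then applies.

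The only substantive step is the identification $f^{**}(\a)=-G(\a)$; after that, the argument is a short unwinding of Legendre transforms combined with Theorem \ref{thm-R} and the standard continuity property of concave functions on the relative interior. I do not foresee a real technical obstacle.
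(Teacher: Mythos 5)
Your proof is correct and follows the same route as the paper: define $f=-g$ on $A$ and $+\infty$ off $A$, identify $W=f^*$ and $G=-f^{**}$, and reduce everything to verifying lower semi-continuity of $f$ so that Theorem~\ref{thm-R} applies (for (i) the paper cites \cite[Theorem 7.4]{Roc70}, which is the same fact you invoke as continuity of a finite concave function on the relative interior of its domain). Your side remark that closedness of $A$ is not actually used in the verification for (ii) is also accurate.
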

\begin{proof}
Let $f:\; \R^k\to \R\cup\{+\infty\}$ be the function which agrees with $-g$ on $A$ but is $+\infty$ everywhere else.
Then $f$ is convex and has $A$ as its efficient domain, i.e., $A=\{{\bf x}:\; f({\bf x})<+\infty\}$.
 By the definition of $W$ and $G$, we have $W=f^*$ and $G=-f^{**}$. However,  $f$ is lower semi-continuous on ${\rm ri}(A)$
(see, e.g., \cite[Theorem 7.4]{Roc70}). Hence by
Theorem \ref{thm-R}, we have $f^{**}(\a)=f(\a)$ for $\a\in {\rm ri}(A)$, and thus
$G(\a)=g(\a)$ for $\a\in {\rm ri}(A)$. This proves (i). To show (ii), assume that $A$ is closed. Let $\a\in A$ so that
$g$ is upper semi-continuous at $\a$. Then it is direct to check that $f$ is lower semi-continuous at $\a$.
By Theorem \ref{thm-R}, we have $f^{**}(\a)=f(\a)$ and  hence
$G(\a)=g(\a)$. This finishes the proof of (ii).
\end{proof}

\section{The thermodynamic formalism and subdifferentials of pressure functions}
\label{S-3}

\setcounter{equation}{0}

 In this section, we firstly introduce a variational
principle of topological pressures which plays a key role in the
proofs of our main theorems. Then we set up a formula for the
subdifferentials of pressure functions.

Let $(X,T)$ be a TDS and let $\Phi=\{\log \phi_n\}_{n=1}^\infty$ be
an asymptotically sub-additive potential on a TDS $(X,T)$. Let
$\lambda_\Phi$, $\Phi_*$ and $\overline{\beta}(\Phi)$ be defined as
in (\ref{e-1.1}), (\ref{e-1.2}) and (\ref{e-1.5}). Some basic
properties of $\lambda_\Phi$, $\Phi_*$ and $\overline{\beta}(\Phi)$
are given in Appendix \ref{A}. The following variational principle
plays a key role in our analysis.
\begin{thm}[\cite{CFH08}]
\label{thm-3.3}
 The topological pressure $P(T,\Phi)$ of $\Phi$
satisfies the following variational principle:
\begin{eqnarray*}
P(T,\Phi)=\left\{\begin{array}{l} -\infty, \qquad\mbox{ if } \Phi_*(\mu)=-\infty \mbox{ for all }\mu\in \M(X,T),\\
\sup\{h_\mu(T)+\Phi_*(\mu):\; \mu\in \M(X,T), \; \Phi_*(\mu)\neq
-\infty\},\mbox{ otherwise}.
\end{array}
\right.
\end{eqnarray*}
In particular if $\htop(T)<\infty$, then
$P(T,\Phi)=\sup\{h_\mu(T)+\Phi_*(\mu):\ \mu\in \M(X,T)\}$.
\end{thm}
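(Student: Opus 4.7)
The plan is to follow the classical Misiurewicz--Katok scheme for the Ruelle--Walters variational principle, but with two modifications: first, pass from the asymptotically sub-additive potential to genuinely sub-additive approximants so that the estimates needed for separated sets become available, and second, insert a Kingman-style reindexing step at every place where additivity is traditionally used. Throughout, the quantities $\Phi_*(\mu)$, $P(T,\Phi)$, and $\overline{\beta}(\Phi)$ should be handled via the basic Lemmas in Appendix~\ref{A}, which ensure (among other things) that $|P(T,\Phi)-P(T,\Psi)|$ and $|\Phi_*(\mu)-\Psi_*(\mu)|$ are both bounded by $\limsup_n \frac{1}{n}\sup_x |\log\phi_n-\log\psi_n|$. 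Using this, the whole statement reduces, up to an arbitrarily small error, to the case where $\Phi$ itself is sub-additive; and at the very end we send the approximation parameter to $0$.

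\textbf{Lower bound.} Fix $\mu\in\M(X,T)$ with $\Phi_*(\mu)>-\infty$ and $h_\mu(T)<\infty$ (the case $h_\mu=\infty$ follows by truncation). Given $\eta>0$, choose a finite Borel partition $\xi=\{A_1,\ldots,A_r\}$ with $\mathrm{diam}\,\xi<\epsilon/2$, with $\mu(\partial A_i)=0$ and with $h_\mu(T,\xi)>h_\mu(T)-\eta$. Pick one point $x_A$ in each element $A$ of $\xi^{n-1}=\bigvee_{k=0}^{n-1}T^{-k}\xi$ that meets $\mathrm{supp}(\mu)$; the family $E_n$ of these representatives is $(n,\epsilon)$-separated. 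By the Shannon--McMillan--Breiman theorem combined with Kingman's sub-additive ergodic theorem, for any $\delta>0$ and all large $n$ there is a set $G_n\subset X$ with $\mu(G_n)>1-\delta$ on which simultaneously
\[
\mu\bigl(\xi^{n-1}(x)\bigr)\leq \exp\bigl(-n(h_\mu(T,\xi)-\eta)\bigr),\qquad \phi_n(x)\geq \exp\bigl(n(\Phi_*(\mu)-\eta)\bigr).
\]
Summing $\phi_n(x_A)$ over those $A\in\xi^{n-1}$ hitting $G_n$ and using upper-semicontinuity of $\log\phi_n$ on each partition element (via a small continuity modulus chosen together with $\xi$), one gets $P_n(T,\Phi,\epsilon)\geq(1-\delta)\exp(n(h_\mu(T,\xi)+\Phi_*(\mu)-2\eta))$, yielding $P(T,\Phi,\epsilon)\geq h_\mu(T)+\Phi_*(\mu)-3\eta$. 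Let $\epsilon,\eta\to 0$.

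\textbf{Upper bound.} For each $\epsilon>0$ and each $n$ pick an $(n,\epsilon)$-separated set $E_n$ with $\sum_{x\in E_n}\phi_n(x)\geq \tfrac12 P_n(T,\Phi,\epsilon)$. Set
\[
\mu_n=\frac{1}{\sum_{x\in E_n}\phi_n(x)}\sum_{x\in E_n}\phi_n(x)\,\delta_x,\qquad \sigma_n=\frac{1}{n}\sum_{k=0}^{n-1}T^k_*\mu_n,
\]
and pass to a subsequence $\sigma_{n_i}\to\sigma\in\M(X,T)$. The main technical step is to show
\[
h_\sigma(T)+\Phi_*(\sigma)\geq \limsup_{i\to\infty}\frac{1}{n_i}\log \sum_{x\in E_{n_i}}\phi_{n_i}(x).
\]
The entropy half is the standard argument: fix a partition $\xi$ of diameter $<\epsilon$ whose boundary is $\sigma$-null; each element of $\xi^{n-1}$ contains at most one point of $E_n$, so $H_{\mu_n}(\xi^{n-1})=-\sum_{x\in E_n}\frac{\phi_n(x)}{\cdot}\log\frac{\phi_n(x)}{\cdot}=\log\sum\phi_n(x)-\frac{1}{\sum\phi_n}\sum\phi_n(x)\log\phi_n(x)$. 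For $q\in\N$ fixed and $n$ large, the usual block decomposition gives $H_{\mu_n}(\xi^{n-1})\leq \frac{n}{q}H_{\sigma_n}(\xi^{q-1})+O(q)$, which tends to $\frac{n}{q}H_\sigma(\xi^{q-1})+o(n)$ and then, dividing by $n$ and letting $q\to\infty$, to $h_\sigma(T,\xi)\leq h_\sigma(T)$. For the $\Phi$-term, this is where sub-additivity is essential: for fixed $q\in\N$ write $n=aq+r$ with $0\leq r<q$, and use sub-additivity $n$ times to bound $\log\phi_n(x)\leq \sum_{j=0}^{a-1}\log\phi_q(T^{jq}x)+\log\phi_r(T^{aq}x)$; averaging over $j\in\{0,\ldots,q-1\}$ starting points and integrating against $\mu_n$ yields
\[
\frac{1}{n\cdot\sum_{x\in E_n}\phi_n(x)}\sum_{x\in E_n}\phi_n(x)\log\phi_n(x)\leq \frac{1}{q}\int\log\phi_q\,d\sigma_n+o(1).
\]
Passing to the weak-* limit and then $q\to\infty$ gives the desired $\Phi_*(\sigma)$ term, using Proposition~\ref{pro-1195} together with the upper-semicontinuity of $\log\phi_q$.

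\textbf{Finishing.} Combining the two displays gives $h_\sigma(T)+\Phi_*(\sigma)\geq P(T,\Phi,\epsilon)-o(1)$, whence $\sup_\mu(h_\mu+\Phi_*)\geq P(T,\Phi)$. In the degenerate case where $\Phi_*(\mu)=-\infty$ for every $\mu\in\M(X,T)$, the same Misiurewicz construction produces some $\sigma$ with $\Phi_*(\sigma)=-\infty$; if $P(T,\Phi)$ were finite, the finite-entropy bound (valid when $\htop(T)<\infty$, otherwise obtained by the above truncation) would force a contradiction, so $P(T,\Phi)=-\infty$. The principal obstacle throughout is the block-reindexing estimate just above: it is the only place where sub-additivity enters in a genuinely non-additive way, and one has to be careful that the potentials $\log\phi_q$ are only upper-semicontinuous (indeed may take the value $-\infty$) so that the passage to the weak-* limit against $\sigma_n$ requires the Portmanteau-type inequality for upper-semicontinuous functions, not the continuous one.
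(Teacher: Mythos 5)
Your overall scheme --- reduce to the genuinely sub-additive case by approximation, then run a Misiurewicz-style argument --- is indeed the route that works, and it matches what the paper itself does: the paper cites the variational principle of Cao--Feng--Huang \cite{CFH08} for sub-additive potentials and notes that that proof carries over by replacing its key weak-$*$ approximation lemma with Lemma~\ref{lem-3.2}, whose proof is exactly the approximation-by-sub-additive step you put up front. Your upper bound (the measures $\mu_n$ and $\sigma_n$, a partition with $\sigma$-null boundary, the block decomposition, and the passage to the weak-$*$ limit via the Portmanteau inequality for upper-semicontinuous functions) is the correct argument and is precisely what Lemma~\ref{lem-3.2} together with the classical entropy estimate furnish.

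Your lower bound, however, has a real gap. You assert that picking one representative $x_A$ in each element $A$ of $\xi^{n-1}$ yields an $(n,\epsilon)$-separated set whenever $\mathrm{diam}(\xi)<\epsilon/2$. That implication is false: two points lying in \emph{different} elements of $\xi^{n-1}$ can still be $d_n$-close. The small-diameter hypothesis gives the opposite direction --- a single $\xi^{n-1}$-element meets any $(n,\epsilon)$-separated set in at most one point --- which is what you need (and correctly use) in the upper bound, not here. The standard remedy is the Misiurewicz compact-refinement device: choose compact $B_i\subset A_i$ with $\mu(A_i\setminus B_i)$ small; the finitely many disjoint compacta $B_i$ are mutually at distance $\ge b$ for some $b>0$, so representatives from distinct elements of $\bigvee_{k<n}T^{-k}\{B_1,\ldots,B_r\}$ really are $(n,b)$-separated, and one works with the modified partition $\{A_0,B_1,\ldots,B_r\}$ in which $A_0$ is the leftover set of small measure. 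Two smaller imprecisions in the same paragraph: SMB and Kingman produce \emph{constant} a.e.\ limits only for ergodic $\mu$, so you should first prove the lower bound for ergodic measures and then pass to general $\mu\in\M(X,T)$ via ergodic decomposition using $\Phi_*(\mu)=\int\Phi_*(\theta)\,dm(\theta)$ from Proposition~\ref{pro-1195}(3); and the representative $x_A$ should be taken inside $A\cap G_n$ itself, so that $\phi_n(x_A)\ge e^{n(\Phi_*(\mu)-\eta)}$ holds by membership and the appeal to a continuity modulus of $\log\phi_n$ (which is not uniform in $n$ for a sub-additive $\Phi$, and so could not be used here anyway) is unnecessary.
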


The above theorem is only proved in \cite[Theorem 1.1]{CFH08} for
sub-additive potentials. However the  proof given there works well
for  asymptotically sub-additive potentials, in which  we only need
to replace Lemma 2.3 in \cite{CFH08} by Lemma \ref{lem-3.2} given in
Appendix. We remark that the  variational principle for sub-additive
potentials has been studied in \cite{Fal88, Bar96, Fen04, Kae04,
KeSt04, Bar06, Mum06} under additional assumptions on the
corresponding sub-additive potential and TDS.

In the remain part of this section, we present and prove a formula
for the subdifferentials of pressure functions. We first give some
notation.

Let $k\in \N$. For each $i=1,\ldots,k$, let $\Phi_i=\{\log
\phi_{n,i}\}_{n=1}^\infty$ be an asymptotically sub-additive
potential on $(X,T)$. Write $\mathbb{R}^k_+=\{
(x_1,x_2,\ldots,x_k):x_i>0,\;i=1,2,\cdots,k\}$ and ${\bf
\Phi}=(\Phi_1,\Phi_2,\ldots,\Phi_k)$. For $\mu\in \M(X,T)$, write
$$
{\bf \Phi}_*(\mu)=((\Phi_1)_*(\mu),\cdots, (\Phi_k)_*(\mu)).
$$

For ${\bf q}=(q_1,\cdots,q_k)\in \mathbb{R}^k_+$, let ${\bf q}\cdot
{\bf \Phi}=\sum_{i=1}^k q_i\Phi_i$ denote the asymptotically
sub-additive potential $\{\sum_{i=1}^kq_i\log
\phi_{n,i}\}_{n=1}^\infty$ and write
\begin{equation}\label{e-2.2}
P_{{\bf \Phi}}({\bf q})=P(T,{\bf q}\cdot {\bf \Phi}).
\end{equation}
We call  $P_{{\bf \Phi}}$  the {\it pressure function} of ${\bf
\Phi}$.

Let $\overline{\beta}({\bf \Phi})=\overline{\beta}(\sum_{i=1}^k
\Phi_i)$. Then by Theorem \ref{thm-3.3}, if $\overline{\beta}({\bf
\Phi})=-\infty$ then $P_{{\bf \Phi}}({\bf q})=-\infty$ for any ${\bf
q}\in \mathbb{R}^k_+$. If $\overline{\beta}({\bf \Phi})>-\infty$,
then $\overline{\beta}(\Phi_1)>-\infty$, $\ldots$,
$\overline{\beta}(\Phi_k)>-\infty$.

\begin{pro} \label{pro-mfsa-1} Assume that $\htop(T)<\infty$ and $\overline{\beta}({\bf \Phi})
>-\infty$. Then  $P_{{\bf \Phi}}$ is a real continuous  convex
function on $\mathbb{R}^k_+$ and
$$\partial
P_{{\bf \Phi}}(\mathbb{R}^k_+)\subseteq
(-\infty,\overline{\beta}(\Phi_1)]\times
(-\infty,\overline{\beta}(\Phi_2)]\times \cdots \times
(-\infty,\overline{\beta}(\Phi_k)],$$ where $\partial P_{{\bf
\Phi}}(\mathbb{R}^k_+)$ is defined as in (\ref{e-0330}).
\end{pro}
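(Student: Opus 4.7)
The plan is to use the variational principle (Theorem \ref{thm-3.3}) to realize $P_{\bf\Phi}$ as a pointwise supremum of affine functions of ${\bf q}$, which will yield convexity and continuity almost for free. The key auxiliary facts I will rely on, coming from Appendix \ref{A}, are that $(\Phi_i)_*(\mu)\le \overline{\beta}(\Phi_i)$ for every $\mu\in \M(X,T)$, and that each $\overline{\beta}(\Phi_i)$ is automatically finite (since for an asymptotically sub-additive potential $\overline{\beta}$ always lies in $\R\cup\{-\infty\}$, never $+\infty$).

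\emph{Real-valuedness.} For the upper bound on $P_{\bf\Phi}({\bf q})$ with ${\bf q}\in \R^k_+$, Theorem \ref{thm-3.3} combined with the finiteness of $\htop(T)$ and the inequalities $(\Phi_i)_*(\mu)\le \overline{\beta}(\Phi_i)$ gives
$$P_{\bf\Phi}({\bf q}) \le \htop(T) + \sum_{i=1}^k q_i\,\overline{\beta}(\Phi_i) <\infty.$$
For the lower bound, I invoke Theorem \ref{thm-1.1} applied to the asymptotically sub-additive potential $\sum_{i=1}^k \Phi_i$ (whose $\overline{\beta}$ equals $\overline{\beta}({\bf\Phi})>-\infty$ by hypothesis), which produces an ergodic $\mu$ with $\sum_{i=1}^k(\Phi_i)_*(\mu)=\overline{\beta}({\bf\Phi})\in \R$. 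Since each summand is bounded above by $\overline{\beta}(\Phi_i)<\infty$ and the sum is finite, every $(\Phi_i)_*(\mu)$ must itself be finite, and the variational principle then gives
$$P_{\bf\Phi}({\bf q})\ge h_\mu(T)+\sum_{i=1}^k q_i (\Phi_i)_*(\mu)>-\infty.$$

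\emph{Convexity, continuity, and the subgradient bound.} Rewriting $P_{\bf\Phi}$ via Theorem \ref{thm-3.3} as $\sup_\mu \{h_\mu(T)+\sum_i q_i(\Phi_i)_*(\mu)\}$ over admissible $\mu$ exhibits it as a supremum of affine functions of ${\bf q}$, hence convex; a finite convex function on an open convex subset of $\R^k$ is automatically continuous. For the subdifferential, fix ${\bf a}=(a_1,\ldots,a_k)\in \partial P_{\bf\Phi}({\bf q})$ and an index $i$, and test the subgradient inequality against ${\bf t}={\bf q}+s{\bf e}_i$ with $s>0$: this yields $s a_i\le P_{\bf\Phi}({\bf q}+s{\bf e}_i)-P_{\bf\Phi}({\bf q})$. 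On the other hand, by Theorem \ref{thm-3.3} and the bound $(\Phi_i)_*(\mu)\le \overline{\beta}(\Phi_i)$,
$$P_{\bf\Phi}({\bf q}+s{\bf e}_i)=\sup_\mu\bigl\{h_\mu(T)+({\bf q}\cdot{\bf\Phi})_*(\mu)+s(\Phi_i)_*(\mu)\bigr\} \le P_{\bf\Phi}({\bf q})+s\overline{\beta}(\Phi_i).$$
Dividing by $s>0$ gives $a_i\le \overline{\beta}(\Phi_i)$, which is the claimed containment.

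The only genuine obstacle is producing a single measure $\mu$ whose $(\Phi_i)_*(\mu)$ are all finite, since the hypothesis $\overline{\beta}({\bf\Phi})>-\infty$ controls only the aggregate $\sum_i \Phi_i$; this is precisely where I appeal to Theorem \ref{thm-1.1} applied to $\sum_i\Phi_i$, after which the individual upper bounds $(\Phi_i)_*(\mu)\le \overline{\beta}(\Phi_i)<\infty$ force each integral into $\R$. Beyond this small piece of bootstrapping, everything reduces to the variational principle and elementary convex analysis.
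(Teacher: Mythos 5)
Your proof is correct and follows essentially the same route as the paper: express $P_{\bf\Phi}$ via the variational principle (Theorem~\ref{thm-3.3}) as a supremum of affine functions of $\bf q$ to get convexity and continuity, then bound the subgradient by perturbing $\bf q$ in the $i$-th coordinate direction and exploiting $(\Phi_i)_*(\mu)\le\overline{\beta}(\Phi_i)$. Two small remarks. First, the fact you attribute to Theorem~\ref{thm-1.1} (existence of an ergodic $\mu$ with $\Phi_*(\mu)=\overline{\beta}(\Phi)$) is really Lemma~\ref{lem-mf-sa}(2); Theorem~\ref{thm-1.1} only asserts nonemptiness of the level set and a variational identity, and its proof in fact relies on that lemma, so your citation is circular and should be redirected. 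The paper itself disposes of the finiteness of $P_{\bf\Phi}$ in one line by appealing to Lemma~\ref{lem-mf-sa}(4). Second, your subgradient step is slightly tidier than the paper's: you derive the one-sided Lipschitz bound $P_{\bf\Phi}({\bf q}+s{\bf e}_i)\le P_{\bf\Phi}({\bf q})+s\overline{\beta}(\Phi_i)$ and divide by $s$, whereas the paper uses the cruder upper estimate $P_{\bf\Phi}({\bf q}_\lambda)\le\htop(T)+\lambda\overline{\beta}(\Phi_1)+\sum_i q_i\overline{\beta}(\Phi_i)$ and sends $\lambda\to\infty$; both work, and neither changes the conceptual content.
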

\begin{proof} By Lemma \ref{lem-mf-sa}(4),
 $P_{{\bf \Phi}}({\bf q})\in \mathbb{R}$
for ${\bf q}\in \mathbb{R}^k_+$. The convexity of $P_{{\bf \Phi}}$
over $\mathbb{R}^k_+$ just comes from Theorem \ref{thm-3.3}, using
the affine property of the maps $\mu\to h_\mu(T)$ and $\mu\to
(\Phi_i)_*(\mu)$. Since $P_{{\bf \Phi}}$ is also locally bounded on
$\mathbb{R}^k_+$, $P_{{\bf \Phi}}$ is continuous on
$\mathbb{R}^k_+$.

Fix ${\bf q}=(q_1,\cdots,q_k)\in \mathbb{R}^k_+$. Define  ${\bf
q}_\lambda=(q_1+\lambda,q_2,\cdots,q_k)$ for $\lambda>0$. Let
$\a=(a_1,\cdots,a_k)\in \partial P_{\bf \Phi}$. Then
$$h_{\text{top}}(T)+\lambda \overline{\beta}(\Phi_1)+\sum_{i=1}^k q_i
 \overline{\beta}(\Phi_i)\ge P_{{\bf \Phi}}({\bf
q}_\lambda)\ge P_{{\bf \Phi}}({\bf q})+({\bf q}_\lambda-{\bf
q})\cdot \a=P_{\bf \Phi}({\bf q})+ \lambda a_1.
$$
Letting $\lambda\rightarrow \infty$ one gets
$\overline{\beta}(\Phi_1)\ge \alpha_1$. Similarly, we have
$\alpha_i\le \overline{\beta}(\Phi_i)$ for $i=2,\cdots,k$.
\end{proof}

For ${\bf q}\in \mathbb{R}^k_+$, let $\mathcal{I}({\bf \Phi},{\bf
q})$ denote the collection of invariant measures $\mu$ such that
$$h_\mu(T)+ {\bf q}\cdot {\bf \Phi}_*(\mu)=P(T,{\bf q}\cdot {\bf \Phi}).$$ If
$\mathcal{I}({\bf \Phi},{\bf q})\neq \emptyset$, then each element
$\mathcal{I}({\bf \Phi},{\bf q})$ is called an {\it equilibrium
state} for ${\bf q}\cdot {\bf \Phi}$.

 In the following theorem, we
set up a formula for the subdifferentials of $P_{{\bf \Phi}}$, which
extends  Ruelle's derivative formula
 for the pressures of  additive potentials (cf. \cite[exercise 5,
p. 99]{Rue-book}, \cite[lemma 4]{Oli99} and \cite[theorem
4.3.5]{Kel-book}).

\begin{thm} \label{lem-mfsa1-1} Assume that
$\htop(T)<\infty$, $\overline{\beta}({\bf \Phi})>-\infty$, and that
the entropy map $\mu\mapsto h_\mu(T)$ is upper semi-continuous. Then
\begin{itemize}
\item[(i)]For
any ${\bf q}\in \mathbb{R}^k_+$, $\mathcal{I}({\bf \Phi},{\bf q})$
is a non-empty compact convex subset of $\mathcal{M}(X,T)$, and
every extreme point of $\mathcal{I}({\bf \Phi},{\bf q})$ is an
ergodic measure {\rm (}i.e., an extreme point of
$\mathcal{M}(X,T)${\rm )}. Furthermore
\begin{equation}\label{eq-de-1}
\partial P_{{\bf \Phi}}({\bf
q})=\{{\bf \Phi}_*(\mu):\mu\in \mathcal{I}({\bf \Phi},{\bf q})\}.
\end{equation}
\item[(ii)] Assume in addition that  $\Phi_i$ ($i=1,\ldots,k$) are all asymptotically additive. Then the above results hold for all $\bq \in \R^k$.
\end{itemize}
\end{thm}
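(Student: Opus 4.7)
The plan is to recognize formula~\eqref{eq-de-1} as an instance of Proposition~\ref{pro-2.3} applied to the functional
$f(\bq,\mu):=h_\mu(T)+\bq\cdot{\bf \Phi}_*(\mu)$
on $U\times Y$, where $U=\mathbb{R}^k_+$ in case~(i) (resp.\ $U=\mathbb{R}^k$ in case~(ii)) and $Y=\M(X,T)$. Under this identification, Theorem~\ref{thm-3.3} gives $g(\bq):=\sup_{\mu\in Y}f(\bq,\mu)=P_{{\bf \Phi}}(\bq)$, and the maximizer set $\I(\bq)$ in Proposition~\ref{pro-2.3} coincides with $\mathcal{I}({\bf \Phi},\bq)$. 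The space $Y$ is compact, convex and metrizable in the weak-$*$ topology, so the first-countability hypothesis of Proposition~\ref{pro-2.3} is satisfied.

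First I would establish the structural claims on $\mathcal{I}({\bf \Phi},\bq)$. Each $(\Phi_i)_*$ is upper semi-continuous with values in $[-\infty,\overline{\beta}(\Phi_i)]$ (cf.\ Proposition~\ref{pro-1195}); since $q_i>0$ in case~(i), $\bq\cdot{\bf \Phi}_*(\cdot)$ is upper semi-continuous, and combined with the assumed upper semi-continuity of $\mu\mapsto h_\mu(T)$, the functional $f(\bq,\cdot)$ is upper semi-continuous on the compact $Y$ and so attains its maximum. That maximum equals $P_{{\bf \Phi}}(\bq)$ by the variational principle, and it is finite: applying Theorem~\ref{thm-1.1} to $\sum_{i=1}^k\Phi_i$ (asymptotically sub-additive, with $\overline{\beta}>-\infty$ by hypothesis) produces some $\nu\in\M(X,T)$ with $\sum_i(\Phi_i)_*(\nu)=\overline{\beta}(\sum_i\Phi_i)>-\infty$, forcing each $(\Phi_i)_*(\nu)$ to be finite and hence $f(\bq,\nu)>-\infty$. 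Compactness and convexity of $\mathcal{I}({\bf \Phi},\bq)=\{\mu:f(\bq,\mu)\ge P_{{\bf \Phi}}(\bq)\}$ follow from upper semi-continuity and the affineness of $\mu\mapsto f(\bq,\mu)$. For the ergodicity of extreme points, take $\mu\in{\rm ext}(\mathcal{I}({\bf \Phi},\bq))$ with ergodic decomposition $\mu=\int\nu\,d\tau(\nu)$: affineness of $f(\bq,\cdot)$ together with the pointwise bound $f(\bq,\cdot)\le P_{{\bf \Phi}}(\bq)$ force $\tau$-a.e.\ $\nu\in\mathcal{I}({\bf \Phi},\bq)$, and extremality then gives $\tau=\delta_\mu$, so $\mu$ is ergodic.

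For the subdifferential formula, conditions (i), (ii), (iv) of Proposition~\ref{pro-2.3} are immediate: $f$ is linear in $\bq$, affine in $\mu$, and $g=P_{{\bf \Phi}}>-\infty$ by the argument above. The delicate condition is the joint upper semi-continuity of $f$ on $U\times Y$. Given $(\bq_n,\mu_n)\to(\bq,\mu)$, pass to a subsequence along which $(\Phi_i)_*(\mu_n)\to b_i\in[-\infty,\overline{\beta}(\Phi_i)]$ and $h_{\mu_n}(T)\to h_\infty$. Upper semi-continuity yields $b_i\le(\Phi_i)_*(\mu)$ and $h_\infty\le h_\mu(T)$; the strict positivity $q_i>0$ (with the convention $q_i\cdot(-\infty)=-\infty$) then gives $\limsup f(\bq_n,\mu_n)\le h_\mu(T)+\sum_i q_i(\Phi_i)_*(\mu)=f(\bq,\mu)$. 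Proposition~\ref{pro-2.3} now yields
$$\partial P_{{\bf \Phi}}(\bq)=\bigcup_{\mu\in\mathcal{I}({\bf \Phi},\bq)}\partial_{\bq}f(\bq,\mu)=\{{\bf \Phi}_*(\mu):\mu\in\mathcal{I}({\bf \Phi},\bq)\},$$
the second equality because $f$ is linear in $\bq$ with gradient ${\bf \Phi}_*(\mu)$.

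For part~(ii), the hypothesis that each $\Phi_i$ is asymptotically additive means $-\Phi_i$ is also asymptotically sub-additive, so each $(\Phi_i)_*$ is continuous and real-valued on $\M(X,T)$ (Proposition~\ref{pro-1195}). Consequently the role of the signs of the $q_i$ in the joint upper semi-continuity argument disappears, and the proof of~(i) extends verbatim with $U=\mathbb{R}^k$ in place of $U=\mathbb{R}^k_+$. I expect the main technical obstacle to lie in the joint upper semi-continuity verification in case~(i), where the possibility $(\Phi_i)_*(\mu_n)\to-\infty$ must be reconciled with the strict positivity of $q_i$; the remaining ingredients are a routine application of the variational principle, the ergodic decomposition, and the convex-analytic Proposition~\ref{pro-2.3}.
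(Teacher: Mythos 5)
Your proof is correct and follows essentially the same route as the paper: both rest on applying Proposition~\ref{pro-2.3} to $f(\bq,\mu)=h_\mu(T)+\bq\cdot{\bf \Phi}_*(\mu)$ with $Y=\M(X,T)$, and both establish the structure of $\I({\bf \Phi},\bq)$ via compactness, upper semi-continuity, and the variational principle. A few small remarks. For the ergodicity of extreme points, you invoke the ergodic decomposition and a Choquet-type fact (a barycenter representation of an extreme point of a metrizable compact convex set must be $\delta_\mu$); the paper's argument is more elementary, writing $\mu=p\mu_1+(1-p)\mu_2$ with $\mu_1,\mu_2\in\M(X,T)$ and showing directly that $\mu_1,\mu_2\in\I({\bf\Phi},\bq)$ via Theorem~\ref{thm-3.3}, so extremality in $\I({\bf\Phi},\bq)$ immediately gives $\mu_1=\mu_2=\mu$. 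Your verification of the joint upper semi-continuity of $f$ on $U\times Y$, including the careful handling of the $(\Phi_i)_*(\mu_n)\to-\infty$ possibility using $q_i>0$, is spelled out more explicitly than in the paper, which simply asserts that conditions (i)--(iv) of Proposition~\ref{pro-2.3} hold. Finally, for finiteness of $P_{\bf\Phi}(\bq)$ you cite Theorem~\ref{thm-1.1}; the direct reference is Lemma~\ref{lem-mf-sa}(2)--(4) (the existence of $\nu\in\M(X,T)$ with $\Phi_*(\nu)=\overline\beta(\Phi)>-\infty$ and the resulting bound $\overline\beta({\bf\Phi})\le P_{\bf\Phi}(\bq)<\infty$), which is what underlies Theorem~\ref{thm-1.1} anyway, so the logic is sound even if the pointer is slightly indirect.
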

\begin{proof}To prove (i), let ${\bf q}\in \mathbb{R}^k_+$. Then $\bq\cdot {\bf \Phi}$ is an asymptotically sub-additive potential.
 We first show that $\mathcal{I}({\bf \Phi},{\bf q})
\neq \emptyset$. By Theorem \ref{thm-3.3}, there exists a sequence
$\{\mu_n\}\subset \mathcal{M}(X,T)$ such that $P_{{\bf \Phi}}({\bf
q})=\lim_{n\to \infty}h_{\mu_n}(T)+{\bf q}\cdot {\bf
\Phi}_*(\mu_n)$. Let $\mu$ be a limit point of $\{\mu_n\}$. Then by
the upper semi-continuity of $h_{(\cdot)}(T)$ and
$(\Phi_i)_*(\cdot)$, we have $P_{{\bf \Phi}}({\bf q})\leq
h_\mu(T)+{\bf q}\cdot {\bf \Phi}_*(\mu)$. Applying  Theorem
\ref{thm-3.3} again we obtain $P_{{\bf \Phi}}({\bf q})=
h_\mu(T)+{\bf q}\cdot {\bf \Phi}_*(\mu)$, i.e., $\mu\in
\mathcal{I}({\bf \Phi},{\bf q})$. Hence $\mathcal{I}({\bf \Phi},{\bf
q}) \neq \emptyset$.

An  identical argument  shows that any limit point of
$\mathcal{I}({\bf \Phi},{\bf q})$ belongs to $\mathcal{I}({\bf
\Phi},{\bf q})$ itself. Therefore $\mathcal{I}({\bf \Phi},{\bf q})$
is closed and thus compact. Now assume that $\mu$ is an extreme
point of $\mathcal{I}({\bf \Phi},{\bf q})$. We claim that  $\mu$ is
ergodic, i.e., $\mu$ is also an extreme point of $\mathcal{M}(X,T)$.
To see it, assume $\mu=p\mu_1+(1-p)\mu_2$ for some $p>0$ and
$\mu_1,\mu_2\in \mathcal{M}(X,T)$. Since
$h_\mu(T)=ph_{\mu_1}(T)+(1-p)h_{\mu_2}(T)$ and ${\bf
\Phi}_*(\mu)=p{\bf \Phi}_*(\mu_1) +(1-p){\bf \Phi}_*(\mu_2)$, we
have
\begin{eqnarray*}
P_{{\bf \Phi}}({\bf q})&=&
h_\mu(T)+{\bf q}\cdot {\bf \Phi}_*(\mu)\\
&=&p\left(h_{\mu_1}(T)+{\bf q}\cdot {\bf \Phi}_*(\mu_1)\right)
+(1-p)\left(h_{\mu_2}(T)+{\bf q}\cdot {\bf \Phi}_*(\mu_2)\right).
\end{eqnarray*}
By Theorem \ref{thm-3.3}, we have  $\mu_1,\mu_2\in \mathcal{I}({\bf
\Phi},{\bf q})$. Since $\mu$ is an extreme point of $\I({\bf
\Phi},{\bf q})$, we have $\mu_1=\mu_2=\mu$. This shows that $\mu$ is
also an extreme point of $\mathcal{M}(X,T)$.

Next we show \eqref{eq-de-1}. In Proposition \ref{pro-2.3}, we take
$Y=\M(X,T)$, $U=\R^k_+$. Define $f:U\times Y\to \R\cup\{-\infty\}$
by
$$f(\bq,\mu)=\bq\cdot {\bf \Phi}_*(\mu)+h_\mu(T).$$
Then $f$ satisfies the conditions (i)-(iv) in Proposition
\ref{pro-2.3}. The identity  \eqref{eq-de-1} just comes from
 (\ref{e-1504}). This finishes the proof of (i).

 Now we turn to the proof of (ii). Assume  $\Phi_i$ ($i=1,\ldots,k$) are all asymptotically additive. Let $\bq\in \R^k$.
 Then $\bq\cdot {\bf \Phi}$ is also asymptotically additive. Clearly the above proof still work for this case (as a slightly different point, we should take $U=\R^k$ for
 the proof of \eqref{eq-de-1}).
\end{proof}

\section{multifractal formalism for aymptotically sub-additive potentials }
\label{S-4}
\setcounter{equation}{0}

In the section, we establish the multifractal formalism for
asymptotically sub-additive potentials. Let $(X,T)$ be a TDS.

\subsection{Proof of Theorem \ref{thm-1.1}}
Let $\Phi=\{\log \phi_n\}_{n=1}^\infty$ be an asymptotically
sub-additive potential on $(X,T)$ with
$\overline{\beta}(\Phi)>-\infty$.  For $x\in X$, we denote by $V(x)$
the set of all limit points  in $\M(X)$ of the sequence
$\mu_{x,n}=(1/n)\sum_{j=0}^{n-1}\delta_{T^jx}$. This set is a
non-empty compact subset of $\M(X,T)$ for each $x$ (cf.
\cite{Bow73}). The following result of Bowen plays a key role in the
proof of Theorem \ref{thm-1.1}.

\begin{lem}[Bowen \cite{Bow73}]
\label{Bow} For $t\geq 0$, define
$$
R(t)=\{x\in X:\; \exists\; \mu\in V(x) \mbox{ with }h_\mu(T)\leq
t\}.
$$
Then $\htop (T, R(t))\leq t$.
\end{lem}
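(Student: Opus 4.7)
The plan is to apply directly the definition of Bowen topological entropy recalled in Section~\ref{S-2.1}: fix arbitrary $s>t$ and $\epsilon>0$, and show that $M(R(t),s,\epsilon)<\infty$. Since $s>t$ and $\epsilon>0$ are arbitrary, this yields $\htop(T,R(t),\epsilon)\le s$ and hence $\htop(T,R(t))\le t$.

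First I will fix a finite Borel partition $\xi=\{P_1,\ldots,P_r\}$ of $X$ with $\mathrm{diam}(P_j)<\epsilon$ for each $j$, and work with the refinement $\xi^{(n)}:=\bigvee_{i=0}^{n-1}T^{-i}\xi$. Since every atom of $\xi^{(n)}$ has $d_n$-diameter less than $\epsilon$, each such atom is contained in a Bowen ball $B_n(x,\epsilon)$ for any $x$ it contains. Hence it suffices to bound, for each large $n$, the number of $\xi^{(n)}$-atoms meeting $R(t)$.

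Pick $\delta>0$ with $t+2\delta<s$. For each invariant $\mu$ with $h_\mu(T)\le t$, the definition of measure-theoretic entropy yields $k=k(\mu)\in\N$ such that $k^{-1}H_\mu(\xi^{(k)})<t+\delta$; by a standard Shannon-type bound, the ``good'' set $G_\mu\subset\{1,\ldots,r\}^k$ of $k$-words $w$ with $\mu(P_w)\ge e^{-k(t+\delta)}$ satisfies $|G_\mu|\le e^{k(t+\delta)}$ while $\mu(\bigcup_{w\in G_\mu}P_w)$ is close to $1$. For any $x\in R(t)$ with $\mu\in V(x)$, the weak-$*$ convergence $\mu_{x,n_j}\to\mu$ along a subsequence $n_j\to\infty$ transfers (once $\xi$ is chosen to be a $\mu$-continuity partition, by a small perturbation of boundaries) into convergence of $k$-block empirical distributions, so the $\xi^{(k)}$-itinerary of $x$ on $[0,n_j)$ spends a fraction arbitrarily close to $1$ inside $G_\mu$. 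A concatenation-plus-Stirling count then bounds the number of admissible $\xi^{(n_j)}$-atoms by $e^{n_j(t+2\delta)}$, and these atoms contribute to the $M$-sum at most $\sum_{n\ge N}e^{n(t+2\delta-s)}$, which is finite and tends to $0$ as $N\to\infty$.

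Finally, to pass from a single $\mu$ to the uncountable family $\{\mu:h_\mu(T)\le t\}$, I will exploit separability of $\M(X,T)$ in the weak-$*$ topology: choose a countable dense subset $\{\mu_\ell\}$ of this family, and enlarge each $G_{\mu_\ell}$ slightly to $\widetilde G_{\mu_\ell}$ so that it absorbs all $x$ whose witnessing measure lies in a small weak-$*$ neighborhood of $\mu_\ell$; this produces a countable cover of $R(t)$ retaining the same exponential bound and summing to a finite total. The main obstacle I anticipate is bridging the one-iterate empirical measure $\mu_{x,n_j}$, which only records visit frequencies to atoms of $\xi$, to the $k$-block frequencies required for the Shannon-type count, and ensuring uniformity across the countable approximating family; this is where the choice of $\xi$ as a simultaneous continuity partition (for the countable family $\{\mu_\ell\}$) and compactness of $\M(X,T)$ play essential roles.
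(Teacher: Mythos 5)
First, note that the paper does not prove this lemma at all: it is quoted directly from Bowen \cite{Bow73}, so the relevant comparison is with Bowen's original argument. Your overall architecture — fix $s>t$, reduce covering by Bowen balls to counting $\xi^{(n)}$-itineraries of points whose empirical measures enter a weak-$*$ neighbourhood of some invariant $\mu$ with $h_\mu(T)\le t$, then pass to countably many such neighbourhoods and sum $\sum_{n\ge N}e^{n(t+2\delta-s)}$ — is essentially Bowen's. The problem is in the core counting step.

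From $k^{-1}H_\mu(\xi^{(k)})<t+\delta$ you cannot conclude that the words $w$ with $\mu(P_w)\ge e^{-k(t+\delta)}$ carry $\mu$-mass close to $1$. That is a Shannon--McMillan type statement which holds only asymptotically in $k$ and only for \emph{ergodic} $\mu$; but the witnessing measures $\mu\in V(x)$ with $h_\mu(T)\le t$ are in general not ergodic, and this is exactly the situation in which the lemma is applied in the paper. Concretely, take $\mu=\tfrac12(\mu_1+\mu_2)$ with $h_{\mu_1}(T)=0$ and $h_{\mu_2}(T)=2t$, so $h_\mu(T)=t$: for every large $k$ the $\mu_2$-half of the mass sits on $\xi^{(k)}$-atoms of measure roughly $\tfrac12 e^{-2tk}<e^{-k(t+\delta)}$ (when $\delta<t$), hence $\mu\bigl(\bigcup_{w\in G_\mu}P_w\bigr)\approx \tfrac12$, a point generic for $\mu$ spends only about half its time in $G_\mu$-blocks, and your concatenation-plus-Stirling bound collapses (the unconstrained half of the blocks alone contributes $r^{cn}$ with $c$ bounded away from $0$). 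Even in the ergodic case the claim is not a consequence of the entropy inequality at the fixed $k$ you choose; it would require SMB and $k$ large. The repair — and what Bowen actually does — is to count by empirical $k$-block \emph{distributions} rather than by membership in a high-probability word set: the number of length-$n$ $\xi$-itineraries whose empirical $k$-block distribution lies within $\eta$ of the $\xi^{(k)}$-distribution of $\mu$ is at most $\exp\bigl\{\tfrac nk\bigl(H_\mu(\xi^{(k)})+\delta\bigr)\bigr\}$ for $\eta$ small and $n$ large, by the standard type-class estimate together with continuity of entropy on the simplex of distributions on $\{1,\dots,r\}^k$; this bound is valid for arbitrary invariant $\mu$ and then feeds into your $\sum_{n\ge N}e^{n(t+2\delta-s)}$ computation. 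A secondary, repairable point: the final reduction to countably many measures should not rest on a dense sequence with unspecified radii; build for \emph{every} $\mu$ with $h_\mu(T)\le t$ a neighbourhood $U_\mu$ defined by finitely many continuous test functions (so that $\mu_{x,n_j}\in U_\mu$ yields block-frequency information without assuming the unknown witnessing measure charges no boundary of $\xi^{(k)}$), and then extract a countable subcover using second countability of $\M(X,T)$.
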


\begin{proof}[Proof of Theorem \ref{thm-1.1}]
Let $\alpha=\overline{\beta}(\Phi)$. By Lemma \ref{lem-mf-sa}(2),
there exists  $\mu\in \E(X,T)$ so that $\Phi_*(\mu)=\alpha$.
By Proposition \ref{pro-1195}(1), $\mu(E_{\Phi}(\alpha))=1$.
Thus $E_{\Phi}(\alpha)\neq \emptyset$. Furthermore by
Proposition \ref{pro-2.1}(3), $h_{\text{top}}(T,
E_{\Phi}(\alpha))\ge h_\mu(T)$. This indeed proves
\begin{equation}
\label{e-thm1} \htop(T,E_{\Phi}(\alpha))\geq \sup \{
h_\mu(T): \mu\in \mathcal{E}(X,T),\; \Phi_*(\mu)=\alpha\}.
\end{equation}

Now assume  $\nu\in \mathcal{M}(X,T)$ so that
$\Phi_*(\nu)=\alpha$. Let $\nu=\int_{\mathcal{E}(X,T)} \theta
~d m(\theta)$ be the ergodic decomposition of $\mu$. By Proposition
\ref{pro-1195}(3),
$\alpha=\Phi_*(\nu)=\int_{\mathcal{E}(X,T)}
\Phi_*(\theta) ~d m(\theta)$. Since $\alpha\geq \Phi_*(\theta)$
for each $\theta\in \E(X,T)$, we have $\alpha=\Phi_*(\theta)$ whenever
$\theta\in \Omega'$, where $\Omega'$ is a subset of $\E(X,T)$ with
$m(\Omega')=1$. Hence
\begin{align*}
h_\nu(T)+\Phi_*(\nu)&=\int_{\mathcal{E}(X,T)} \left (
h_{\theta}(T)+\Phi_*(\theta) \right ) ~d
m(\theta)=\int_{\Omega'} \left ( h_{\theta}(T)+\Phi_*(\theta)
\right ) ~d m(\theta)\\
&\le \int_{\Omega'} \sup \{ h_\mu(T):\mu\in \mathcal{E}(X,T),
\Phi_*(\mu)=\alpha\} ~ d m(\theta)+\alpha\\
&=\sup \{ h_\mu(T):\mu\in \mathcal{E}(X,T),
\Phi_*(\mu)=\alpha\}+\alpha.
\end{align*}
This proves
\begin{equation}\label{e-thm1-2}
\sup\{h_\mu(T): \mu\in \mathcal{M}(X,T),\;
\Phi_*(\mu)=\alpha\}\leq \sup\{h_\mu(T): \mu\in
\mathcal{E}(X,T),\; \Phi_*(\mu)=\alpha\}.
\end{equation}

Next we prove  that
\begin{equation}
\label{e-thm1-3} \htop(T,E_{\Phi}(\alpha))\le \sup \{
h_\mu(T): \mu\in \mathcal{M}(X,T),\; \Phi_*(\mu)=\alpha\}.
\end{equation}
Denote by $t$ the right-hand side of the above inequality.  We may
assume that $t<\infty$, otherwise there is nothing remained to
prove.
 Let $x\in E_{\Phi}(\alpha)$ and  $\mu\in V(x)$. Then there is $n_i\rightarrow \infty$ such that
$\mu_{x,n_i}=\frac{1}{n_i}\sum
\limits_{i=0}^{n_i-1}\delta_{T^ix}\rightarrow \mu$. By Lemma
\ref{lem-3.2}, $\mu\in \M(X,T)$ and $\alpha=\lim_{i\rightarrow \infty}
\frac{\log\phi_{n_i}(x)}{n_i}\le \Phi_*(\mu)$. Moreover
$\alpha=\Phi_*(\mu)$ by Lemma \ref{lem-mf-sa}(2).
 Hence $h_\mu(T)\le
t$. It follows that
$$
E_{\Phi}(\alpha)\subset R(t):=\{x\in X:\; \exists\; \mu\in
V(x) \mbox{ with }h_\mu(T)\leq t\}.
$$
By Lemma \ref{Bow}, we have $\htop(T, E_{\Phi}(\alpha))\leq
\htop(T, R(t))\leq t$.  This proves (\ref{e-thm1-3}). Now Theorem \ref{thm-1.1} just follows from (\ref{e-thm1})-(\ref{e-thm1-3}).
\end{proof}

\subsection{A high dimensional version of Theorem \ref{thm-1.2}}

In this subsection, we present and prove a high dimension version of Theorem
\ref{thm-1.2}.

We first give some notation. Let $k\in \N$. For each $i=1,\ldots,k$,
let $\Phi_i=\{\log \phi_{n,i}\}_{n=1}^\infty$ be an asymptotically
sub-additive potential on $(X,T)$. For $\a=(a_1,\ldots,a_k)\in
\mathbb{R}^k$, let
\begin{equation}
\label{e-1.4-h} E_{{\bf \Phi}}(\a)=\{x\in X:\;
\lambda_{\Phi_i}(x)=a_i,\ i=1,2,\ldots,k \}.
\end{equation}
For any $\b=(b_1,\ldots,b_k)\in
\mathbb{R}^k$, define
$$|\b|:=\max\{|b_i|:\;i=1,\ldots,k\}.$$

For ${\bf x}=(x_1,\ldots,x_k)$ and ${\bf y}=(y_1,\ldots,y_k)\in
\mathbb{R}^k$, we write ${\bf x}\geq {\bf y}$ if $x_i\ge y_i$ for all  $1\leq i\leq k$. For $A\subseteq \mathbb{R}^k$,
write
\begin{equation}\label{e-0330a}
\text{cl}_{+}(A)=\{ {\bf x}\in \mathbb{R}^k: \; \exists \;({\bf y}_j)\subset A
\text{ such that } {\bf x}\geq {\bf y}_j \text{ and
}\lim_{j\rightarrow \infty} {\bf y}_j={\bf x}\}.
\end{equation}
For a real valued function $f$ defined on a convex open set $U\subset \R^k$, let $\partial f({\bf x})$, $\partial^e f({\bf x})$,
(${\bf x}\in U$), $\partial f(U)$ and $\partial^e f(U)$ be defined as in Section \ref{S-2.3}. The following result is a high dimensional version of
Theorem \ref{thm-1.2}.

\begin{thm}
\label{thm-1.2-h} Assume  $\htop(T)<\infty$ and
$\overline{\beta}({\bf \Phi})
>-\infty$. Then $P_{{\bf \Phi}}$ is a real continuous convex function on $\mathbb{R}_+^k$. Moreover,
\begin{itemize}
\item[(i)]For any ${\bf t}\in \mathbb{R}^k_+$, if $\a\in
\partial^eP_{{\bf \Phi}}({\bf t})$, then
$$\lim_{\epsilon\to 0}\htop\left(T,\bigcup_{|\b-\a|<\epsilon}
E_{{\bf \Phi}}(\b)\right) =\inf_{{\bf
q}\in \mathbb{R}^k_+}\{P_{{\bf \Phi}}({\bf
q})-\a\cdot{\bf q}\}= P_{{\bf \Phi}}({\bf
t})-\a\cdot {\bf t}.$$ Moreover the first equality is also
valid when $\a\in
\text{\rm cl}_+(\partial^eP_{{\bf \Phi}}(\mathbb{R}^k_+))$.

\item[(ii)] For any ${\bf t}\in \mathbb{R}^k_+$, if $\a\in
\partial P_{{\bf \Phi}}({\bf t})$, then
 $$
\lim_{\epsilon\to 0}\; \sup\left\{h_\mu(T):\ \mu\in
\mathcal{M}(X,T),\;
\left|{\bf \Phi}_*(\mu)-\a\right|<\epsilon\right\}=
\inf_{{\bf q}\in \mathbb{R}^k_+}\{P_{{\bf \Phi}}({\bf
q})-\a\cdot {\bf q}\}.$$ Furthermore the  above equality is
valid  for $\a\in \text{\rm cl}_+(\partial
P_{{\bf \Phi}}(\mathbb{R}^k_+))$.

\item[(iii)] For any
$\a\in \partial P_{{\bf \Phi}}(\mathbb{R}^k_+)\cap {\rm ri}(A)$,
$$
\inf_{{\bf q}\in \mathbb{R}^k_+}\{P_{{\bf \Phi}}({\bf
q})-\a\cdot {\bf q}\}=\sup\{h_\mu(T):\
{\bf \Phi}_*(\mu)=\a\},
 $$
 where $A:=\{\a\in \R^k:\; \a={{\bf \Phi}}_*(\mu)\mbox { for some } \mu\in \M(X,T)\}$, and ${\rm ri}$ denotes the relative
 interior (cf. \cite{Roc70}).
\end{itemize}
\end{thm}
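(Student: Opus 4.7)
The plan is to prove parts (ii), (i), (iii) in that order, using the sub-additive variational principle (Theorem~\ref{thm-3.3}), Bowen's Lemma~\ref{Bow}, and the convex-analytic machinery of Section~\ref{S-2.3}, especially Proposition~\ref{lem-key2.2} and Minkowski's theorem. Continuity and convexity of $P_{{\bf\Phi}}$ on $\R_+^k$ are already given by Proposition~\ref{pro-mfsa-1}, and part (ii) is the engine that drives both (i) and (iii).

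For part (ii), the inequality ``$\le$'' is immediate from the variational principle: if $|{\bf\Phi}_*(\mu)-\a|<\epsilon$ and $\bq\in\R_+^k$, then $h_\mu(T)\le P_{{\bf\Phi}}(\bq)-\bq\cdot{\bf\Phi}_*(\mu)\le P_{{\bf\Phi}}(\bq)-\bq\cdot\a+\epsilon(q_1+\cdots+q_k)$, and letting $\epsilon\to 0$ then taking infimum over $\bq$ gives the bound. For ``$\ge$'', I would first treat $\a\in\partial^e P_{{\bf\Phi}}(\bt)$. Proposition~\ref{lem-key2.2} supplies a sequence $\bt_n\to\bt$ at which $P_{{\bf\Phi}}$ is differentiable with gradient $\a_n\to\a$. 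For each $n$, Theorem~\ref{thm-3.3} yields $\mu_{n,k}\in\M(X,T)$ with $h_{\mu_{n,k}}(T)+\bt_n\cdot{\bf\Phi}_*(\mu_{n,k})>P_{{\bf\Phi}}(\bt_n)-1/k$. Combining this with the variational inequality $h_{\mu_{n,k}}(T)+\bq\cdot{\bf\Phi}_*(\mu_{n,k})\le P_{{\bf\Phi}}(\bq)$ and sending $k\to\infty$ shows that every cluster point of $({\bf\Phi}_*(\mu_{n,k}))_k$ is a subgradient of $P_{{\bf\Phi}}$ at $\bt_n$, hence equals $\a_n$ by differentiability. Thus ${\bf\Phi}_*(\mu_{n,k})\to\a_n$ and $h_{\mu_{n,k}}(T)\to P_{{\bf\Phi}}(\bt_n)-\bt_n\cdot\a_n$; a diagonal extraction then produces $\nu_n$ with ${\bf\Phi}_*(\nu_n)\to\a$ and $h_{\nu_n}(T)\to P_{{\bf\Phi}}(\bt)-\bt\cdot\a$, which equals $\inf_{\bq\in\R_+^k}\{P_{{\bf\Phi}}(\bq)-\bq\cdot\a\}$ by the subgradient property. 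A general $\a\in\partial P_{{\bf\Phi}}(\bt)$ is a convex combination $\sum_{i=1}^{k+1}\lambda_i\a_i$ of extreme subgradients by Minkowski plus Carath\'eodory, and the corresponding convex combination of approximating measures transports the result since $h_\bullet(T)$ and ${\bf\Phi}_*$ are affine on $\M(X,T)$. The extension to $\a\in\text{cl}_+(\partial P_{{\bf\Phi}}(\R_+^k))$ is a further diagonal argument after checking that $V(\a):=\inf_{\bq\in\R_+^k}\{P_{{\bf\Phi}}(\bq)-\bq\cdot\a\}$ varies continuously along sequences $\a_j\to\a$ with $\a_j\le\a$ componentwise.

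Part (i) then follows from (ii). The lower bound on $\htop(T,\bigcup_{|\b-\a|<\epsilon}E_{{\bf\Phi}}(\b))$ comes from observing that every ergodic $\mu$ with $|{\bf\Phi}_*(\mu)-\a|<\epsilon$ satisfies $\mu(E_{{\bf\Phi}}({\bf\Phi}_*(\mu)))=1$ by Kingman's theorem, so Proposition~\ref{pro-2.1}(3) gives $\htop\ge h_\mu(T)$; taking supremum over such ergodic $\mu$ and promoting to $\M(X,T)$ by ergodic decomposition (as in the proof of Theorem~\ref{thm-1.1}) recovers exactly the quantity computed in (ii). For the upper bound, if $x$ is in the fuzzy level set and $\mu\in V(x)$, Lemma~\ref{lem-3.2} gives ${\bf\Phi}_*(\mu)\ge\b$ componentwise with $|\b-\a|<\epsilon$, whence $h_\mu(T)\le P_{{\bf\Phi}}(\bq)-\bq\cdot\a+\epsilon(q_1+\cdots+q_k)$ for every $\bq\in\R_+^k$; Bowen's Lemma~\ref{Bow} converts this into the analogous upper bound on $\htop$ of the fuzzy level set, and letting $\epsilon\to 0$ followed by infimum over $\bq$ matches the lower bound. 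The middle equality $\inf_{\bq\in\R_+^k}\{P_{{\bf\Phi}}(\bq)-\bq\cdot\a\}=P_{{\bf\Phi}}(\bt)-\bt\cdot\a$ when $\a\in\partial P_{{\bf\Phi}}(\bt)$ is routine subgradient duality. For part (iii), set $g(\b):=\sup\{h_\mu(T):{\bf\Phi}_*(\mu)=\b\}$ on the convex set $A$; affineness of $h_\bullet(T)$ and ${\bf\Phi}_*$ makes $g$ concave on $A$, hence continuous on $\text{ri}(A)$ by standard convex analysis. At any $\a\in\text{ri}(A)$ this gives $\lim_{\epsilon\to 0}\sup\{g(\b):\b\in A,\ |\b-\a|<\epsilon\}=g(\a)$, and when $\a\in\partial P_{{\bf\Phi}}(\R_+^k)$ the left-hand side is exactly $V(\a)$ by (ii).

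The main obstacle is the absence of upper semi-continuity of the entropy map: Theorem~\ref{lem-mfsa1-1} is unavailable, so no equilibrium state is at hand to realize a given subgradient as its ${\bf\Phi}_*$. The whole workaround --- a two-layer diagonal extraction over differentiable approximants $\bt_n\to\bt$ supplied by Proposition~\ref{lem-key2.2} and over approximate-equilibrium sequences $\mu_{n,k}$ at each $\bt_n$ --- hinges on the rigidity statement that, at any differentiability point $\bt_n$, every such approximating sequence must have ${\bf\Phi}_*(\mu_{n,k})\to P'_{{\bf\Phi}}(\bt_n)$. This convexity-forced convergence, proved without any use of USC, is the technical heart of the argument.
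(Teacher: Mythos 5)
Your rigidity argument for part (ii) is sound: at a differentiability point $\bt_n$, any approximate-equilibrium sequence $\mu_{n,k}$ does have ${\bf\Phi}_*(\mu_{n,k})\to P'_{{\bf\Phi}}(\bt_n)$, since every cluster point is a subgradient at $\bt_n$ (and boundedness of ${\bf\Phi}_*(\mu_{n,k})$ follows from $h_{\mu_{n,k}}\le\htop(T)<\infty$ together with $\bt_n\in\R_+^k$). This is a valid repackaging of the paper's Lemma~\ref{lem-mfsa-5}. Likewise, your derivation of (iii) from (ii) via continuity of the bounded concave function $g(\b)=\sup\{h_\mu(T):{\bf\Phi}_*(\mu)=\b\}$ on ${\rm ri}(A)$ is correct and genuinely different from the paper's route through Corollary~\ref{cor-0330} (conjugate duality); it also needs, and has, upper semi-continuity of $g$ at points of ${\rm ri}(A)$, which follows from concavity and boundedness. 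Using Bowen's Lemma~\ref{Bow} for the upper bound in (i), instead of the direct covering estimate of Corollary~\ref{cor-3.8}, is another legitimate variant.

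The genuine gap is in the lower bound for part (i). Your measures $\mu_{n,k}$ come from the variational principle Theorem~\ref{thm-3.3} and are merely invariant; after the diagonal extraction the resulting $\nu_n$ are still just invariant, so Kingman's theorem does \emph{not} give $\nu_n(E_{{\bf\Phi}}({\bf\Phi}_*(\nu_n)))=1$ and Proposition~\ref{pro-2.1}(3) cannot be applied. Your proposed fix --- ``promoting to $\M(X,T)$ by ergodic decomposition as in the proof of Theorem~\ref{thm-1.1}'' --- does not work here: in Theorem~\ref{thm-1.1} the target value $\overline\beta(\Phi)$ is the \emph{maximum} of $\Phi_*$, so every ergodic component of a measure realizing it is forced to hit the same value; for an interior $\a$ the ergodic decomposition of $\mu$ with ${\bf\Phi}_*(\mu)\approx\a$ can have components whose ${\bf\Phi}_*$ is far from $\a$, and there is no a priori control on which components carry the entropy. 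The correct repair, which is exactly what the paper does, is to replace the raw appeal to Theorem~\ref{thm-3.3} with Lemma~\ref{lem-3.9}, which produces \emph{ergodic} $\mu_{n,k}$ with $h_{\mu_{n,k}}(T)+\bt_n\cdot{\bf\Phi}_*(\mu_{n,k})\ge P_{{\bf\Phi}}(\bt_n)-1/k$ by ergodically decomposing an approximate equilibrium state and selecting a good component. Your rigidity argument then applies verbatim to these ergodic measures, the diagonal $\nu_n$ are ergodic, and the topological-entropy lower bound in (i) for $\a\in\partial^e P_{{\bf\Phi}}(\bt)$ goes through directly (not via part (ii)); for general $\a\in\partial P_{{\bf\Phi}}(\bt)$ in part (ii), the convex combination $\sum\lambda_i\nu_{i,n}$ is no longer ergodic, but that is harmless because (ii) is a statement about invariant measures only.
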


\begin{re}
When $\Phi_i$ ($i=1,\ldots k)$ are all asymptotically additive, the results in Theorem  \ref{thm-1.2-h} can be extended accordingly.
Indeed, one can replace all the terms $\R_+^k$ in Theorem \ref{thm-1.2-h} by $\R^k$, except the two terms in $\text{\rm cl}_+(\partial^eP_{{\bf \Phi}}(\mathbb{R}^k_+))$
 and $\text{\rm cl}_+(\partial P_{{\bf \Phi}}(\mathbb{R}^k_+))$.
\end{re}

To prove Theorem \ref{thm-1.2-h}, we need some preparations. For any
$\a=(a_1,\cdots,a_k)\in \mathbb{R}^k$ and $\epsilon>0$, define
\begin{equation}
\label{e-mfsa-1} G(\a,n,\epsilon):=\left\{ x\in X:\;
\left|\frac{1}{\ell} \log \phi_{\ell,i}(x)-a_i\right|<\epsilon \mbox{ for all }1\leq i\leq k \mbox{ and } \ell\geq n\right\}.
 \end{equation}
We have the following

\begin{lem}
\label{lem-3.3} Assume that $G(\a,n,\epsilon)\neq \emptyset$. Then
\begin{itemize}
\item[(i)]For any ${\bf q}=(q_1,\cdots,q_k)\in \mathbb{R}^k_+$,
$$\htop(T, G(\a,n,\epsilon))\leq P_{{\bf \Phi}}({\bf q})-\sum_{i=1}^k (a_i-\epsilon)q_i.$$
\item[(ii)] Assume  furthermore that all $\Phi_i$ ($i=1,\ldots,k$) are asymptotically additive. Then for any ${\bf q}=(q_1,\cdots,q_k)\in \mathbb{R}^k$,
$$\htop(T, G(\a,n,\epsilon))\leq P_{{\bf \Phi}}({\bf q})-\a\cdot \bq+\epsilon\sum_{i=1}^k|q_i|.$$
\end{itemize}

\end{lem}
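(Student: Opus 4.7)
My plan is to establish both bounds directly from Bowen's definition of topological entropy by exhibiting, at each scale $N\geq n$, an explicit cover of $G(\a,n,\epsilon)$ by Bowen balls $B_N(x,\epsilon_0)$ whose centers form a maximal $(N,\epsilon_0/2)$-separated subset $E_N$ of $G(\a,n,\epsilon)$. By maximality every point of $G(\a,n,\epsilon)$ is $d_N$-within $\epsilon_0/2$ of some $x\in E_N$, so $\{B_N(x,\epsilon_0)\}_{x\in E_N}$ covers $G(\a,n,\epsilon)$ and
$$
M(G(\a,n,\epsilon),s,N,\epsilon_0)\leq |E_N|\,e^{-sN}
$$
for every $s\geq 0$. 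The key point is that the centers lie in $G(\a,n,\epsilon)$ itself, so by the defining inequality of $G(\a,n,\epsilon)$, for every $x\in E_N$ and every $i$ one has $\log\phi_{N,i}(x)>N(a_i-\epsilon)$; when each $\Phi_i$ is asymptotically additive, one also has $\log\phi_{N,i}(x)<N(a_i+\epsilon)$.

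For part (i), multiplying the lower bounds by $q_i\geq 0$ and summing yields $\sum_{i}q_i\log\phi_{N,i}(x)>N\sum_{i}q_i(a_i-\epsilon)$ for every $x\in E_N$. Since $E_N$ is $(N,\epsilon_0/2)$-separated in $X$, the definition of $P_N(T,\bq\cdot{\bf \Phi},\epsilon_0/2)$ gives
$$
|E_N|\cdot e^{N\sum_i q_i(a_i-\epsilon)}\leq \sum_{x\in E_N}\prod_{i=1}^k\phi_{N,i}(x)^{q_i}\leq P_N(T,\bq\cdot{\bf \Phi},\epsilon_0/2).
$$
For any $s>P_{\bf \Phi}(\bq)-\sum_i q_i(a_i-\epsilon)$, using $P(T,\bq\cdot{\bf \Phi},\epsilon_0/2)\leq P_{\bf \Phi}(\bq)$ (the pressure at scale $\epsilon_0/2$ is bounded by its limit as $\epsilon_0\to 0$), this forces $|E_N|e^{-sN}\to 0$ as $N\to\infty$, hence $M(G(\a,n,\epsilon),s,\epsilon_0)=0$ and $\htop(T,G(\a,n,\epsilon),\epsilon_0)\leq s$. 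Since the bound is uniform in $\epsilon_0$, passing to the limit $\epsilon_0\to 0$ and letting $s$ decrease to $P_{\bf \Phi}(\bq)-\sum_i q_i(a_i-\epsilon)$ proves (i). Part (ii) follows by the identical argument: the two-sided bound combined with $q_i$ of arbitrary sign gives $\sum_i q_i\log\phi_{N,i}(x)\geq N(\bq\cdot\a-\epsilon\sum_i|q_i|)$ at each $x\in E_N$, and asymptotic additivity of every $\Phi_i$ ensures that $\bq\cdot{\bf \Phi}$ is asymptotically sub-additive for any $\bq\in\R^k$, so the same pressure estimate applies.

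The mild obstacle is that the lower bound on $\log\phi_{\ell,i}$ coming from the definition of $G(\a,n,\epsilon)$ is available only at points of $G(\a,n,\epsilon)$ itself and not on whole Bowen balls; this is precisely why the cover must be built from a maximal separated subset of $G(\a,n,\epsilon)$ rather than estimating an arbitrary cover term-by-term. Once this is arranged, the whole argument reduces to comparing the two exponential rates $e^{N P_{\bf \Phi}(\bq)}$ (from the pressure bound on $P_N$) and $e^{N\sum_i q_i(a_i-\epsilon)}$ (the forced lower bound at the centers).
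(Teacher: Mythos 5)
Your proof is correct and follows essentially the same route as the paper's: both use a maximal $(N,\cdot)$-separated subset of $G(\a,n,\epsilon)$ simultaneously as a cover (to bound $M(G,s,N,\cdot)$) and as a separated set (to bound $P_N(T,\bq\cdot{\bf\Phi},\cdot)$ from below via the defining inequality of $G(\a,n,\epsilon)$), and part~(ii) is the same two-sided variant. The only cosmetic difference is that you argue in the forward direction (take $s>P_{\bf\Phi}(\bq)-\sum_i(a_i-\epsilon)q_i$ and conclude $M(G,s,\epsilon_0)=0$), whereas the paper argues contrapositively (take $s<\htop(T,G(\a,n,\epsilon))$ and conclude $P_{\bf\Phi}(\bq)\geq s+\sum_i(a_i-\epsilon)q_i$); you are also slightly more careful in taking a maximal $(N,\epsilon_0/2)$-separated set and covering with $\epsilon_0$-balls rather than reusing the same scale for both roles. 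One small point worth being explicit about: from $M(G,s,N,\epsilon_0)\leq|E_N|e^{-sN}\to 0$ you can conclude $M(G,s,\epsilon_0)=0$ because $M(G,s,N,\epsilon_0)$ is non-decreasing in $N$, so a non-decreasing sequence dominated by one tending to $0$ must vanish identically.
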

\begin{proof} We first prove (i). Fix ${\bf q}=(q_1,\ldots,q_k)\in \mathbb{R}^k_+$. It
suffices to show that for any $s<\htop(T, G(\a,n,\epsilon))$,
$$P_{{\bf \Phi}}({\bf q})\geq s+\sum_{i=1}^k (a_i-\epsilon)q_i.$$ Let $s<\htop(T,
G(\a,n,\epsilon))$ be given. By definition (cf. Section
\ref{S-2.1}), there exists $\gamma>0$ such that $\htop(T,
G(\a,n,\epsilon),\gamma)>s$. Therefore (cf. Section \ref{S-2.1})
$$\infty=M(G(\a,n,\epsilon),s,\gamma)=\lim_{N\to \infty}
M(G(\a,n,\epsilon),s,N,\gamma).$$ Hence there exists $N_0$ such that
$$
M(G(\a,n,\epsilon),s,N,\gamma)\geq 1, \quad \forall\; N\geq N_0.
$$
Now take $N\geq \max\{n, N_0\}$ and let $F$ be a
$(N,\gamma)$-separated subset of $G(\a,n,\epsilon)$ with the maximal
cardinality. Then $\bigcup_{x\in F}B_N(x,\gamma)\supseteq
G(\a,n,\epsilon)$. It follows
\begin{equation}
\label{e-3.6} \# F\cdot \exp(-sN)\geq
M(G(\a,n,\epsilon),s,N,\gamma)\geq 1.
\end{equation}
Since $\sum_{i=1}^k q_i\log \phi_{N,i}(x)\geq
N(\sum_{i=1}^k(a_i-\epsilon)q_i)$ for each $x\in G(\a,n,\epsilon)$,
we have
$$
P_N\left(T,{\bf q}\cdot {\bf \Phi},\gamma\right)\geq \sum_{x\in
F}\exp\Big(\sum_{i=1}^k q_i\log \phi_{N,i}(x)\Big)\geq \# F\cdot
\exp\Big(N\Big(\sum_{i=1}^k(a_i-\epsilon)q_i\Big)\Big).$$ Combining
this with (\ref{e-3.6}) yields $P_N\left(T,{\bf q}\cdot {\bf
\Phi},\gamma\right)\geq \exp(N(s+\sum_{i=1}^k(a_i-\epsilon)q_i))$.
Taking $N\to \infty$ we obtain $P\left(T,{\bf q}\cdot {\bf
\Phi},\gamma\right)\geq s+\sum_{i=1}^k(a_i-\epsilon)q_i$. Hence we
have
$$
P_{{\bf \Phi}}({\bf q})=P\left(T,{\bf q}\cdot {\bf \Phi}\right)\geq
s+\sum_{i=1}^k(a_i-\epsilon)q_i,
$$
which finishes the proof (i).

The proof of (ii) is almost identical. The only difference part is
to use the inequality  $$\sum_{i=1}^k q_i\log \phi_{N,i}(x)\geq
N(\sum_{i=1}^k(a_iq_i-\epsilon |q_i|)$$ for each $x\in
G(\a,n,\epsilon)$ and $\bq\in \R^k$.
\end{proof}

As a corollary, we have
\begin{cor}
\label{cor-3.8} Let $\a=(a_1,\ldots,a_k)\in \mathbb{R}^k$ and
$\epsilon>0$.  Let $E_{{\bf \Phi}}(\a)$ be defined as in {\rm
(\ref{e-1.4-h})}. Then
$$\htop\left(T, \bigcup_{|\b-\a|<\epsilon}
E_{{\bf \Phi}}(\b)\right)\leq P_{{\bf \Phi}}({\bf
q})-\sum_{i=1}^k(a_i-\epsilon)q_i \ \ \text{ for any }{\bf
q}=(q_1,\ldots,q_k)\in \mathbb{R}^k_+,$$ whenever
$\bigcup_{|\b-\a|<\epsilon}E_{{\bf \Phi}}(\b)\neq \emptyset$.
Furthermore if all $\Phi_i$ ($i=1,\ldots,k$) are asymptotically
additive, then
$$\htop\left(T, \bigcup_{|\b-\a|<\epsilon}
E_{{\bf \Phi}}(\b)\right)\leq P_{{\bf \Phi}}({\bf q})-\a\cdot
\bq+\epsilon\sum_{i=1}^k|q_i|, \;\; \forall \; {\bf
q}=(q_1,\ldots,q_k)\in \mathbb{R}^k,$$ whenever
$\bigcup_{|\b-\a|<\epsilon}E_{{\bf \Phi}}(\b)\neq \emptyset$.
\end{cor}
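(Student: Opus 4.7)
The plan is to deduce this corollary directly from Lemma \ref{lem-3.3} by exhibiting $\bigcup_{|\b-\a|<\epsilon} E_{{\bf \Phi}}(\b)$ as a countable union of sets of the form $G(\a,n,\epsilon)$, and then applying the uniform (in $n$) pressure bound already established there.

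The first step would be to prove the inclusion
$$\bigcup_{|\b-\a|<\epsilon} E_{{\bf \Phi}}(\b) \;\subseteq\; \bigcup_{n=1}^\infty G(\a,n,\epsilon).$$
Fix $x\in E_{{\bf \Phi}}(\b)$ with $|\b-\a|<\epsilon$. By definition of $E_{{\bf \Phi}}(\b)$, for every $1\leq i\leq k$ we have $\frac{1}{\ell}\log\phi_{\ell,i}(x)\to b_i$, and since $|b_i-a_i|<\epsilon$ \emph{strictly}, for all sufficiently large $\ell$ we get $|\frac{1}{\ell}\log\phi_{\ell,i}(x)-a_i|<\epsilon$. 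Choosing $n$ large enough so this holds for every $\ell\geq n$ and every $i$ simultaneously, we obtain $x\in G(\a,n,\epsilon)$.

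Next I would note that the family $\{G(\a,n,\epsilon)\}_n$ is nested increasingly: imposing the condition for all $\ell\geq n_1$ is stronger than for all $\ell\geq n_2$ when $n_1\leq n_2$, so $G(\a,n_1,\epsilon)\subseteq G(\a,n_2,\epsilon)$. Under the non-emptiness hypothesis on the union, the inclusion above forces $G(\a,n,\epsilon)\neq\emptyset$ for all sufficiently large $n$, so Lemma \ref{lem-3.3}(i) applies uniformly in $n$: for every ${\bf q}\in\R^k_+$,
$$\htop(T,G(\a,n,\epsilon))\;\leq\; P_{{\bf \Phi}}({\bf q})-\sum_{i=1}^k(a_i-\epsilon)q_i.$$
Combining this with Proposition \ref{pro-2.1}(1)--(2),
$$\htop\Big(T,\bigcup_{|\b-\a|<\epsilon}E_{{\bf \Phi}}(\b)\Big)\;\leq\;\htop\Big(T,\bigcup_{n=1}^\infty G(\a,n,\epsilon)\Big)\;=\;\sup_n \htop(T,G(\a,n,\epsilon)),$$
and the right-hand side is bounded by the uniform estimate, giving the first inequality.

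For the asymptotically additive case the argument is identical, except that in the last step one invokes Lemma \ref{lem-3.3}(ii) instead of (i) to obtain the bound $P_{{\bf \Phi}}({\bf q})-\a\cdot{\bf q}+\epsilon\sum_{i=1}^k|q_i|$ valid for all ${\bf q}\in\R^k$. I do not anticipate a real obstacle; the only subtle point is the strict inequality $|\b-\a|<\epsilon$, which is precisely what allows the limit $b_i$ of $\frac{1}{\ell}\log\phi_{\ell,i}(x)$ to be absorbed inside the open $\epsilon$-neighborhood of $a_i$ from some \emph{finite} stage $n$ onwards, ensuring each $x$ actually enters one of the sets $G(\a,n,\epsilon)$ rather than only their closure.
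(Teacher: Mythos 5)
Your proof is correct and follows essentially the same route as the paper: the inclusion $\bigcup_{|\b-\a|<\epsilon} E_{{\bf \Phi}}(\b)\subseteq\bigcup_{n\ge 1}G(\a,n,\epsilon)$, then Proposition \ref{pro-2.1} to pass to $\sup_n \htop(T,G(\a,n,\epsilon))$, then Lemma \ref{lem-3.3}. The extra observations you make (why the strict inequality $|\b-\a|<\epsilon$ secures the inclusion, and that the $G(\a,n,\epsilon)$ are nested so non-emptiness propagates upward before Lemma \ref{lem-3.3} is invoked) are details the paper leaves implicit, but they do not change the argument.
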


\begin{proof} Observe that
$\bigcup_{|\b-\a|<\epsilon}E_{{\bf \Phi}}(\b)\subseteq
\bigcup_{n=1}^\infty G(\a,n,\epsilon)$. By Proposition
\ref{pro-2.1}, we obtain
\begin{eqnarray*} \htop\left(T,
\bigcup_{|\b-\a|<\epsilon}E_{{\bf \Phi}}(\b)\right) &\leq&
\htop\left(T,\bigcup_{n=1}^\infty G(\a,n,\epsilon)\right)\\
&=& \sup_{n\geq 1}\htop(T,G(\a,n,\epsilon)).
\end{eqnarray*}
By Lemma \ref{lem-3.3}, we obtain the desired result.
 \end{proof}

\begin{lem}
\label{lem-3.9} Assume  $\htop(T)<\infty$ and
$\overline{\beta}({\bf \Phi})>-\infty$. Let ${\bf q}\in
\mathbb{R}^k_+$. Then for any $\epsilon>0$, there exists $\nu\in
\E(X,T)$ such that $h_\nu(T)+{\bf q}\cdot {\bf \Phi}_*(\nu)\geq  P_{{\bf \Phi}}({\bf q})-\epsilon$.
\end{lem}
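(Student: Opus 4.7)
The plan is to reduce this to the variational principle (Theorem \ref{thm-3.3}) and then extract an ergodic component using the ergodic decomposition. The idea is standard: in the variational principle the supremum is taken over $\M(X,T)$, but the integrand in the ergodic decomposition must itself achieve near-maximum values on a set of positive measure, yielding an ergodic measure that nearly attains $P_{{\bf \Phi}}({\bf q})$.

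Step one: Since $\overline{\beta}({\bf \Phi})>-\infty$, we have $\overline{\beta}(\Phi_i)>-\infty$ for each $i$, so ${\bf q}\cdot {\bf \Phi}$ is an asymptotically sub-additive potential with $\overline{\beta}({\bf q}\cdot{\bf \Phi})>-\infty$. By Theorem \ref{thm-3.3} combined with the finiteness of $\htop(T)$, there exists $\mu\in \M(X,T)$ with
\[
h_\mu(T)+{\bf q}\cdot {\bf \Phi}_*(\mu)\ge P_{{\bf \Phi}}({\bf q})-\epsilon/2.
\]
In particular the left-hand side is finite, so each $(\Phi_i)_*(\mu)$ is finite (as $q_i>0$).

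Step two: Write the ergodic decomposition $\mu=\int_{\E(X,T)}\theta\, dm(\theta)$. The classical identity $h_\mu(T)=\int h_\theta(T)\, dm(\theta)$ and Proposition \ref{pro-1195}(3) applied to each $\Phi_i$ give
\[
h_\mu(T)+{\bf q}\cdot {\bf \Phi}_*(\mu)=\int_{\E(X,T)}\bigl(h_\theta(T)+{\bf q}\cdot {\bf \Phi}_*(\theta)\bigr)\, dm(\theta).
\]
The integrand on the right takes values in $\R\cup\{-\infty\}$; since the integral is finite, the set of $\theta$ on which the integrand equals $-\infty$ is $m$-null. Off this set Theorem \ref{thm-3.3} applied to each ergodic measure gives the pointwise upper bound $h_\theta(T)+{\bf q}\cdot {\bf \Phi}_*(\theta)\le P_{{\bf \Phi}}({\bf q})$.

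Step three: Let $f(\theta)=P_{{\bf \Phi}}({\bf q})-(h_\theta(T)+{\bf q}\cdot {\bf \Phi}_*(\theta))\ge 0$ ($m$-a.e.). Then $\int f\, dm\le \epsilon/2$, so $m\{\theta:f(\theta)\le \epsilon\}\ge 1/2>0$ by Chebyshev's inequality. In particular there exists $\nu\in \E(X,T)$ with $f(\nu)\le \epsilon$, i.e.\ $h_\nu(T)+{\bf q}\cdot {\bf \Phi}_*(\nu)\ge P_{{\bf \Phi}}({\bf q})-\epsilon$, as required. The only potential subtlety is the presence of $-\infty$ values in the ergodic decomposition of $(\Phi_i)_*$, but this is controlled automatically because the overall integral is finite; beyond this technicality, the argument is purely a pigeonhole on the ergodic decomposition.
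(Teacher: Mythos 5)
Your proof is correct and takes essentially the same route as the paper: invoke Theorem~\ref{thm-3.3} to find $\mu\in\M(X,T)$ nearly realizing $P_{\bf \Phi}({\bf q})$, pass to the ergodic decomposition, and use Proposition~\ref{pro-1195}(3) together with the affineness of entropy to reduce to a pigeonhole on the ergodic components. The only cosmetic difference is that the paper finds $\mu$ within $\epsilon$ (rather than $\epsilon/2$) and concludes directly that some ergodic $\theta$ must satisfy $h_\theta(T)+{\bf q}\cdot{\bf\Phi}_*(\theta)\ge P_{\bf\Phi}({\bf q})-\epsilon$ (otherwise the integral would be strictly smaller), whereas you buy yourself an $\epsilon/2$ margin and apply Chebyshev; both steps are the same observation.
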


\begin{proof} Let $\epsilon>0$. By Theorem \ref{thm-3.3}, there
exists $\mu\in \M(X,T)$ such that $$h_\mu(T)+{\bf q}\cdot {\bf \Phi}_*(\mu)\geq P_{{\bf \Phi}}({\bf q})-\epsilon.$$ Let
$\mu=\int_{\E(X,T)}\theta~dm(\theta)$ be the ergodic decomposition of $\mu$.
Then by Proposition \ref{pro-1195}(3), we have
$$\int_{\E(X,T)}(h_\theta(T)+{\bf q}\cdot {\bf \Phi}_*(\theta))~ dm(\theta)=h_\mu(T)+{\bf q}\cdot {\bf \Phi}_*(\mu)\geq
P_{{\bf \Phi}}({\bf q})-\epsilon.$$
Hence there exists at least
one $\nu\in \E(X,T)$ such that $h_\nu(T)+{\bf q}\cdot {\bf \Phi}_*(\nu)\geq P_{{\bf \Phi}}({\bf q})-\epsilon$.
 \end{proof}

\bigskip

 The following result is important  in the proof of Theorem \ref{thm-1.2-h}.

\begin{lem}
\label{lem-mfsa-5} Assume   $\htop(T)<\infty$ and
$\overline{\beta}({\bf \Phi})>-\infty$. Let ${\bf t}\in
\mathbb{R}^k_+$. Assume $\a\in
\partial^e P_{{\bf \Phi}}({\bf t})$. Then for any
$\epsilon>0$, there exists $\nu\in \mathcal{E}(X,T)$ such that
$|{\bf \Phi}_*(\nu)-\a|<\epsilon$ and
$|h_\nu(T)-(P_{{\bf \Phi}}({\bf t})-\a\cdot {\bf t})|<\epsilon$.
\end{lem}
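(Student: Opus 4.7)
The plan is to approximate $\a$ by gradients of $P_{\bf \Phi}$ at nearby differentiability points and then extract $\nu$ as an ergodic approximate equilibrium state there. By Proposition \ref{pro-mfsa-1}, $P_{\bf \Phi}$ is a real continuous convex function on the open convex cone $\R^k_+$; since $\a\in \partial^e P_{\bf \Phi}(\bt)$, Proposition \ref{lem-key2.2} yields a sequence $(\bt_n)\subset \R^k_+$ with $\bt_n\to \bt$, $P_{\bf \Phi}$ differentiable at each $\bt_n$, and $P'_{\bf \Phi}(\bt_n)\to \a$. For fixed $\epsilon>0$ I would freeze one sufficiently late $\bt_n$ and use Lemma \ref{lem-3.9} to produce an ergodic approximate equilibrium state at that point; differentiability will force its potential averages to cluster near $P'_{\bf \Phi}(\bt_n)\approx \a$, and the entropy conclusion will follow from the two-sided variational bounds.

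The technical core is the following localization claim: if $P_{\bf \Phi}$ is differentiable at $\bq\in \R^k_+$, then for every $\eta>0$ there exists $\delta>0$ such that every $\mu\in \M(X,T)$ satisfying $h_\mu(T)+\bq\cdot {\bf \Phi}_*(\mu)\ge P_{\bf \Phi}(\bq)-\delta$ also satisfies $|{\bf \Phi}_*(\mu)-P'_{\bf \Phi}(\bq)|<\eta$. To prove it, subtract the variational inequality $h_\mu(T)+{\bf s}\cdot {\bf \Phi}_*(\mu)\le P_{\bf \Phi}({\bf s})$ (Theorem \ref{thm-3.3}) from the approximate equality at $\bq$ to get $({\bf s}-\bq)\cdot {\bf \Phi}_*(\mu)\le P_{\bf \Phi}({\bf s})-P_{\bf \Phi}(\bq)+\delta$ for all ${\bf s}\in \R^k_+$. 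Plug in ${\bf s}=\bq\pm r{\bf e}_i$ (valid for small $r>0$ since $\bq$ is interior), divide by $r$, and note that by differentiability the right-hand side converges to $\pm(P'_{\bf \Phi}(\bq))_i$ as $r\to 0^+$; by first fixing $r$ small and then $\delta\ll r\eta$ this gives $|(\Phi_i)_*(\mu)-(P'_{\bf \Phi}(\bq))_i|<\eta$ coordinate by coordinate.

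Given $\epsilon>0$, pick $n$ large enough that $|P'_{\bf \Phi}(\bt_n)-\a|$, $|\bt_n-\bt|$, and $|P_{\bf \Phi}(\bt_n)-P_{\bf \Phi}(\bt)|$ are all as small as needed (quantitative thresholds set below), apply the claim at $\bq=\bt_n$ with a sufficiently tiny $\eta$, and invoke Lemma \ref{lem-3.9} with tolerance at most the corresponding $\delta$ to obtain $\nu\in \E(X,T)$ with $h_\nu(T)+\bt_n\cdot {\bf \Phi}_*(\nu)\ge P_{\bf \Phi}(\bt_n)-\delta$. The claim plus the triangle inequality yield $|{\bf \Phi}_*(\nu)-\a|<\epsilon$ and a uniform bound on $|{\bf \Phi}_*(\nu)|$. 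The approximate equilibrium condition gives the lower bound $h_\nu(T)\ge P_{\bf \Phi}(\bt_n)-\bt_n\cdot {\bf \Phi}_*(\nu)-\delta$, while Theorem \ref{thm-3.3} applied at $\bt$ gives the upper bound $h_\nu(T)\le P_{\bf \Phi}(\bt)-\bt\cdot {\bf \Phi}_*(\nu)$. Writing $\bt_n\cdot {\bf \Phi}_*(\nu)-\bt\cdot \a=(\bt_n-\bt)\cdot {\bf \Phi}_*(\nu)+\bt\cdot({\bf \Phi}_*(\nu)-\a)$ and using continuity of $P_{\bf \Phi}$ at $\bt$, both bounds sandwich $h_\nu(T)$ within $O(\epsilon)$ of $P_{\bf \Phi}(\bt)-\a\cdot \bt$, which can be tightened to $<\epsilon$ by an initial rescaling of the thresholds.

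The main obstacle is that the tolerance $\delta$ delivered by the localization claim depends on the specific differentiability point $\bt_n$, through the (possibly slow) rate of first-order approximation of $P_{\bf \Phi}$ there. This causes no genuine difficulty because we only need one $\nu$ for each fixed $\epsilon$: we are free to freeze $\bt_n$ first and only then shrink $\delta$. It is worth noting that upper semi-continuity of the entropy map is nowhere used, which is consistent with Lemma \ref{lem-mfsa-5} serving as a building block for Theorem \ref{thm-1.2-h} rather than for Theorem \ref{thm-1.3}.
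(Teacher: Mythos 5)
Your proof is correct and follows essentially the same route as the paper's: both approximate $\a$ by gradients $P'_{\bf\Phi}(\bt_n)$ at nearby differentiability points via Proposition~\ref{lem-key2.2}, extract an ergodic approximate equilibrium state from Lemma~\ref{lem-3.9}, and localize ${\bf\Phi}_*(\nu)$ by plugging coordinate perturbations $\bt_n\pm r{\bf e}_i$ into the variational inequality from Theorem~\ref{thm-3.3}. The paper packages the argument as a case split — differentiable point first, then reduction of the general case to that one — whereas you merge everything into a single pass through a nearby differentiability point and a self-contained ``localization claim''; these are the same computation. The only bookkeeping divergence is how the entropy estimate is transferred: the paper propagates $|h_\nu(T)-(P_{\bf\Phi}(\bt_n)-P'_{\bf\Phi}(\bt_n)\cdot\bt_n)|<\epsilon/2$ together with closeness of $(\bt_n, P'_{\bf\Phi}(\bt_n))$ to $(\bt,\a)$, while you sandwich $h_\nu(T)$ between the approximate equality at $\bt_n$ and the variational upper bound at $\bt$ — both work, and your observation that upper semi-continuity of entropy is not needed is accurate.
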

\begin{proof} By Proposition \ref{pro-mfsa-1}, $P_{{\bf \Phi}}$ is a real  continuous
convex function on $\mathbb{R}^k_+$. Let ${\bf t}=(t_1,\ldots,t_k)\in \mathbb{R}^k_+$
and $\epsilon>0$. We first assume that
$P_{{\bf \Phi}}$ is differentiable at ${\bf t}$. Let $\a=P_{{\bf \Phi}}'({\bf t})$ and write $\a=(a_1,\ldots,a_k)$. Set
$\delta=\min\{\frac{\epsilon}{3k},\frac{\epsilon}{3k|{\bf t}|}\}$.
Choose $\gamma_0>0$ such that
\begin{equation}\label{e-mfsa-3}
\frac{\left|P_{{\bf \Phi}}({\bf t}+{\bf s})
-P_{{\bf \Phi}}({\bf t})-\a\cdot {\bf s} \right|}{|{\bf s}|}<\delta\quad \mbox{ for all }{\bf s}\in \R^k\mbox{ with } 0<|{\bf
s}|\leq \gamma_0.
\end{equation}
Pick $\eta$ such that $0<\eta\leq\min\{\epsilon/3,
\delta\gamma_0\}$. By Lemma \ref{lem-3.9}, there exists $\nu\in
\mathcal{E}(X,T)$ such that
\begin{equation}\label{e-mfsa-4}
h_\nu(T)+ {\bf t}\cdot {\bf \Phi}_*(\nu)\geq
P_{{\bf \Phi}}({\bf t})-\eta.
\end{equation}
Meanwhile by Theorem \ref{thm-3.3},
\begin{equation}\label{e-mfsa-5}
h_\nu(T)+({\bf t}+{\bf s})\cdot {\bf \Phi}_*(\nu)\leq
P_{{\bf \Phi}}({\bf t}+{\bf s}) \quad \mbox{ for all }{\bf s}\in \R^k\mbox{ with }{\bf t}+{\bf s}\in \R^k_+.
\end{equation}
Combining (\ref{e-mfsa-5}) and (\ref{e-mfsa-4}) yields
\begin{equation}\label{e-new1}
P_{{\bf \Phi}}({\bf t}+{\bf
s})-P_{{\bf \Phi}}({\bf t})\geq {\bf s}\cdot {\bf \Phi}_*(\nu)-\eta \quad \mbox{ for all }{\bf s}\in \R^k
\mbox{ with }{\bf t}+{\bf s}\in \R^k_+.
\end{equation}
Construct points ${\bf s}_i\in \R^k$ ($i=1,\ldots,k$) by ${\bf
s}_i=(s_{i,1},\ldots, s_{i,k})$, where
$$s_{i,j}=\begin{cases} 0 &\text{ if } i\neq j,\\ \gamma_0 &\text{ if }
i=j.\end{cases}$$
 Taking ${\bf s}=\pm {\bf s}_i$ in (\ref{e-new1}) yields
$$
\frac{P_{{\bf \Phi}}({\bf t}+{\bf
s}_i)-P_{{\bf \Phi}}({\bf t})}{\gamma_0}\geq
(\Phi_i)_*(\nu)-\frac{\eta}{\gamma_0}\quad \mbox{ and }\quad
\frac{P_{{\bf \Phi}}({\bf t}-{\bf
s}_i)-P_{{\bf \Phi}}({\bf t})}{-\gamma_0}\leq
(\Phi_i)_*(\nu)+\frac{\eta}{\gamma_0}.$$
Combining  the above  two inequalities  with
(\ref{e-mfsa-3}), we have
$$
|(\Phi_i)_*(\nu)-a_i|\leq
\delta+\frac{\eta}{\gamma_0}\leq 2\delta<\epsilon, \quad i=1,\ldots,k,
$$
which combining  with (\ref{e-mfsa-4}) and (\ref{e-mfsa-5})  yields
$$h_\nu(T)\geq P_{{\bf \Phi}}({\bf t})-\eta-\sum_{i=1}^k t_i(a_i+2\delta)
\geq P_{{\bf \Phi}}({\bf t})-\a\cdot {\bf t}-\eta-2k|{\bf t}|\delta
>P_{{\bf \Phi}}({\bf t})-\a\cdot {\bf t}-\epsilon.
$$
and
$$h_\nu(T)\leq P_{{\bf \Phi}}({\bf t})-{\bf t}\cdot {\bf \Phi}_*(\nu)
\leq  P_{{\bf \Phi}}({\bf t})-\sum_{i=1}^k t_i(a_i
-2\delta)
 < P_{{\bf \Phi}}({\bf t})-\a\cdot {\bf t}+\epsilon.
$$
This proves the lemma in the case that
$P_{{\bf \Phi}}$ is differentiable at ${\bf t}$.

Now assume that $P_{{\bf \Phi}}$ is not differentiable
at ${\bf t}$. Let $\a=(a_1,\ldots,a_k)\in
\partial^e P_{{\bf \Phi}}({\bf t})$. Since $P_{{\bf \Phi}}$ is a real continuous convex function on $\mathbb{R}_+^k$,  by Proposition \ref{lem-key2.2}, there
exists a sequence $({\bf t}_n)\subset \R^k_+$ converging to ${\bf t}$ such that
$P_{{\bf \Phi}}^\prime({\bf t}_n)$ exists for each $n$
and $\lim \limits_{n\to \infty}P_{\bf \Phi}^\prime({\bf t}_n)=\a$. Choose a large integer $n$ such that
\begin{equation}
\label{e-mfsa-6} \left|P_{{\bf \Phi}}^\prime({\bf t}_n)-\a\right|<\frac{\epsilon}{2} \quad \mbox{and} \quad
\left|\left(P_{{\bf \Phi}}({\bf t}_n)-P_{{\bf \Phi}}^\prime({\bf t}_n)\cdot {\bf t}_n\right)-\left(P_{{\bf \Phi}}({\bf
t})-\a\cdot {\bf
t}\right)\right|<\frac{\epsilon}{2}.
\end{equation} As  proved in the last paragraph, we can
choose $\nu\in \mathcal{E}(X,T)$ such that
\begin{equation}
\label{e-mfsa-7}
\left|{\bf \Phi}_*(\nu)-P_{\bf \Phi}^\prime({\bf t}_n)\right|<\frac{\epsilon}{2}\quad \mbox{and}\quad
\left|h_\nu(T)-\left(P_{{\bf \Phi}}({\bf t}_n)-P^\prime_{{\bf \Phi}}({\bf t}_n)\cdot {\bf t}_n\right)\right|
<\frac{\epsilon}{2}.
\end{equation}
Combining (\ref{e-mfsa-6}) with (\ref{e-mfsa-7}) yields
$|{\bf \Phi}_*(\nu)-\a|<\epsilon$ and
$|h_\nu(T)-(P_{{\bf \Phi}}({\bf
t})-\a\cdot {\bf t})|<\epsilon$. This finishes
the proof of the lemma.
\end{proof}

\begin{proof}[Proof of Theorem \ref{thm-1.2-h}]
By Proposition \ref{pro-mfsa-1}, $P_{{\bf \Phi}}$ is a real continuous convex function  on $\mathbb{R}_+^k$.

We first prove part (i) of the theorem. Let ${\bf t}=(t_1,\ldots, t_k)\in \mathbb{R}_+^k$ and  $\a=(a_1,\ldots, a_k)\in
\partial^e P_{{\bf \Phi}}({\bf t})$. Let
$\epsilon>0$. Then by Lemma \ref{lem-mfsa-5}, there exists $\nu\in
\E(X,T)$ such that
\begin{equation}\label{e-mfsa-8}
\left|{\bf \Phi}_*(\nu)-\a\right|<\epsilon\quad
\mbox{and} \quad \left|h_\nu(T)-(P_{{\bf \Phi}}({\bf
t})-\a\cdot {\bf t})\right|<\epsilon.
\end{equation}
Since $\nu\in \E(X,T)$, by Proposition \ref{pro-1195}(1),
$\lambda_{\Phi_i}(x)=(\Phi_i)_*(\nu)$ for $\nu$-a.e.\! $x\in X$, $i=1,\ldots,k$.
That is,
$\nu(E_{{\bf \Phi}}({\bf \Phi}_*(\nu)))=1$.
By Proposition \ref{pro-2.1}(3),
$h_{\text{top}}\left(T,E_{{\bf \Phi}}({\bf \Phi}_*(\nu))\right)\geq
 h_\nu(T)$. Since ${\bf \Phi}_*(\nu)$ and $h_\nu(T)$
satisfy (\ref{e-mfsa-8}), we have
$$h_{\text{top}}\left(T,\bigcup_{|\b-\a|<\epsilon}
E_{{\bf \Phi}}(\b)\right)\geq
h_{\text{top}}\left(T,
E_{{\bf \Phi}}({\bf \Phi}_*(\nu))\right)\geq
h_\nu(T)\geq P_{{\bf \Phi}}({\bf t})-\a\cdot {\bf t}-\epsilon.
$$
On the other hand by Corollary \ref{cor-3.8}, we have
$$h_{\text{top}}\left(T,\bigcup_{|\b-\a|<\epsilon}
E_{{\bf \Phi}}(\b)\right) \leq
P_{{\bf \Phi}}({\bf t})-\sum_{i=1}^k(a_i-\epsilon)
t_i.
$$
Combining the above two inequalities and letting $\epsilon\to 0$,
we obtain
$$\lim_{\epsilon\to 0}
h_{\text{top}}\left(T,\bigcup_{|\b-\a|<\epsilon}
E_{{\bf \Phi}}(\b)\right) =
P_{{\bf \Phi}}({\bf t})-\a\cdot {\bf t}=\inf_{{\bf q}\in
\mathbb{R}^k_+}\{P_{{\bf \Phi}}({\bf
q})-\a\cdot {\bf q}\}.$$

Now assume $\a\in \text{cl}_+(\partial^e
P_{{\bf \Phi}}(\R^k))$. Then there exist ${\bf t}_j\in
\mathbb{R}_+^k$ and $\b_j\in
\partial^e P_{{\bf \Phi}}({\bf t}_j)$, $j\in
\mathbb{N}$ such that $\a\geq\b_j$
and $\lim \limits_{j\rightarrow
\infty}\b_j=\a$. Let $\epsilon>0$. There exists a large $j_\epsilon$ such that
$|\a-\b_{j_\epsilon}|<\frac{\epsilon}{2}$.
Thus
\begin{eqnarray*}\htop\left(T,\bigcup_{|\b-\a|<\epsilon}
E_{{\bf \Phi}}(\b)\right)&\geq&
\htop\left(T,\bigcup_{|\b-\b_{j_\epsilon}|<\frac{\epsilon}{2}}
E_{{\bf \Phi}}(\b)\right)\\
&\ge&P_{{\bf \Phi}}({\bf t}_{j_\epsilon})
-\b_{j_\epsilon}\cdot {\bf t}_{j_\epsilon}\geq
P_{{\bf \Phi}}({\bf t}_{j_\epsilon})
-\a\cdot {\bf t}_{j_\epsilon}\\
&\geq& \inf_{{\bf q}\in
\mathbb{R}^k_+}\{P_{{\bf \Phi}}({\bf
q})-\a\cdot {\bf q}\},
\end{eqnarray*}
 and hence
$$\lim_{\epsilon\to 0} \htop\left(T,\bigcup_{|\b-\a|<\epsilon}
E_{{\bf \Phi}}(\b)\right)\geq \inf_{{\bf
q}\in \mathbb{R}^k_+}\{P_{{\bf \Phi}}({\bf
q})-\a\cdot {\bf q}\}.$$ Meanwhile, the upper bound follows
from Corollary \ref{cor-3.8}. This finishes the proof of part (i).

\medskip
To show (ii), we first prove the following upper bound:
\begin{equation}
\label{e-mfsa-9} \lim_{\epsilon\to 0}\; \sup\left\{h_\mu(T):\ \mu\in
\mathcal{M}(X,T),\;
\left|{\bf \Phi}_*(\mu)-\a\right|<\epsilon\right\}\leq
\inf_{{\bf q}\in \mathbb{R}^k_+}\{P_{{\bf \Phi}}({\bf
q})-\a\cdot {\bf q}\}
\end{equation}
for any $\a=(a_1,\ldots,a_k)\in \mathbb{R}^k$, where we take the
convention $\sup\emptyset =-\infty$. To see it, let ${\bf q}=(q_1,\ldots,q_k)\in
\mathbb{R}^k_+$ and $\epsilon>0$. Then by Theorem
\ref{thm-3.3}, for any $\mu\in \mathcal{M}(X,T)$ satisfying
$\left|{\bf \Phi}_*(\mu)-\a\right|<\epsilon$,
$$h_\mu(T)\leq P_{{\bf \Phi}}({\bf q})-{\bf q}\cdot {\bf \Phi}_*(\mu) \leq P_{{\bf \Phi}}({\bf q})-
\sum_{i=1}^k(a_i-\epsilon)q_i.$$ That is,
$\sup\left\{h_\mu(T):\ \mu\in \mathcal{M}(X,T),\;
\left|{\bf \Phi}_*(\mu)-\a\right|<\epsilon\right\}\leq
P_{{\bf \Phi}}({\bf q})-\sum_{i=1}^k(a_i-\epsilon)
q_i$. Letting $\epsilon\to 0$ yields (\ref{e-mfsa-9}).

Now we prove the lower bound. Assume $\a\in
\partial P_{{\bf \Phi}}({\bf t})$
for some ${\bf t}\in \mathbb{R}^k_+$. Let $\epsilon>0$. Then by Minkowski's Theorem (cf. Section \ref{S-2.3}), there exist  $\a_j\in \partial^e
P_{{\bf \Phi}}({\bf t})$ and  $\lambda_j\in [0,1]$, $j=1,\ldots,k+1$,
 such that $\sum_{j=1}^{k+1} \lambda_j=1$ and
\begin{equation}\label{eq-aaa1}
\a=\sum_{j=1}^{k+1} \lambda_j\a_j.
\end{equation}
By Lemma \ref{lem-mfsa-5}, there exist $\nu_j\in \E(X,T)$,
$j=1,\ldots,k+1$ such that
\begin{equation}
\label{e-mfsa-10}
\left|{\bf \Phi}_*(\nu_j)-\a_j\right|<\epsilon,
\quad\left|h_{\nu_i}(T)-(P_{{\bf \Phi}}({\bf
t})-\a_j\cdot {\bf t})\right|<\epsilon.
\end{equation}
Set $\nu=\sum_{j=1}^{k+1} \lambda_j \nu_j$. Then $\nu\in
\mathcal{M}(X,T)$ and
$$
{\bf \Phi}_*(\nu)=\sum_{j=1}^{k+1}\lambda_j{\bf \Phi}_*(\nu_j),\quad
h_\nu(T)=\sum_{j=1}^{k+1} \lambda_j h_{\nu_j}(T).
$$
Combining these  with \eqref{eq-aaa1} and (\ref{e-mfsa-10})  yields
$$
\left|{\bf \Phi}_*(\nu)-\a\right|<
\epsilon,\quad\left|h_{\nu}(T)-(P_{{\bf \Phi}}({\bf
t})-\a\cdot {\bf t})\right|<\epsilon.
$$
Thus $\sup\left\{h_\mu(T):\; \mu\in \mathcal{M}(X,T),\;
\left|{\bf \Phi}_*(\mu)-\a\right|<\epsilon\right\}
\geq h_\nu(T)\geq P_{{\bf \Phi}}({\bf
t})-\a\cdot {\bf t}-\epsilon$. Letting
$\epsilon\to 0$ yields the desired lower bound
\begin{align*}
& \lim_{\epsilon\to 0}\; \sup\left\{h_\mu(T)\; :\mu\in
\mathcal{M}(X,T),\;
\left|{\bf \Phi}_*(\mu)-\a\right|<\epsilon\right\}\\
&\quad \geq P_{{\bf \Phi}}({\bf t})-\a\cdot {\bf t}=\inf_{{\bf q}\in \mathbb{R}^k_+}\{P_{{\bf \Phi}}({\bf
q})-\a\cdot {\bf q}\}.
\end{align*}

In the end, we  assume $\a\in \text{\rm cl}_+(\partial
P_{{\bf \Phi}})$. Then there exist ${\bf t}_j\in
\mathbb{R}_+^k$ and $\b_j\in
\partial^e P_{{\bf \Phi}}({\bf t}_j)$, $j\in
\mathbb{N}$ such that $\a\geq\b_j$
 and $\lim \limits_{j\rightarrow
\infty}\b_j=\a$. Let $\epsilon>0$. There exists a large $j_\epsilon$ such that
$|\a-\b_{j_\epsilon}|<\frac{\epsilon}{2}$.
Thus
\begin{eqnarray*}
& &\sup\left\{h_\mu(T)\; :\mu\in \mathcal{M}(X,T),\;
\left|{\bf \Phi}_*(\mu)-\a\right|<\epsilon\right\}\\
&& \quad \geq\sup\left\{h_\mu(T)\; :\mu\in \mathcal{M}(X,T),\;
\left|{\bf \Phi}_*(\mu)-\b_{j_\epsilon}\right|
<\frac{\epsilon}{2}\right\}\\
&& \quad \geq  P_{{\bf \Phi}}({\bf t}_{j_\epsilon})
-\b_{j_\epsilon}\cdot {\bf t}_{j_\epsilon}\geq
P_{{\bf \Phi}}({\bf t}_{j_\epsilon})
-\a\cdot {\bf t}_{j_\epsilon}\\
&&\quad \geq  \inf_{{\bf q}\in
\mathbb{R}^k_+}\{P_{{\bf \Phi}}({\bf
q})-\a\cdot {\bf q}\},
\end{eqnarray*}
 and hence
$$\lim_{\epsilon\to 0}\; \sup\left\{h_\mu(T)\; :\mu\in
\mathcal{M}(X,T),\;
\left|{\bf \Phi}_*(\mu)-\a\right|<\epsilon\right\}\\
\geq \inf_{{\bf q}\in \mathbb{R}^k_+}\{P_{{\bf \Phi}}({\bf
q})-\a\cdot {\bf q}\}.$$
 This finishes the proof of part
(ii).

To show part (iii), let $A=\{\a\in \R^k:\; \a={\bf \Phi}_*(\mu) \mbox{ for some }\mu\in \M(X,T)\}$. Clearly, $A$ is non-empty and convex. Define $g:\; A\to \R$ by
$$
g(\a)=\sup\{h_\mu(T):\; \mu\in \M(X,T),\; {\bf \Phi}_*(\mu)=\a\}.
$$
Then $g$ is a real-valued concave function on $A$. Take
$$
W({\bf x} )=\sup\{g(\a)+\a\cdot {\bf x}:\; \a\in A\},\quad \forall \; {\bf x}\in \R^k.
$$
 Apply Corollary \ref{cor-0330} to obtain
\begin{equation}
\label{e-0330c}
\inf\{W({\bf x})-\a\cdot {\bf x}:\; {\bf x}\in \R^k\}=g(\a),\quad \forall \; \a\in {\rm ri}(A).
\end{equation}
However, by Theorem \ref{thm-3.3}, we have $P_{\bf \Phi}({\bf q})=W({\bf q})$ for all ${\bf q}\in \R^k_+$. Now assume that
$\a\in \partial P_{\bf \Phi}({\bf q})\cap {\rm ri}(A)$ for some ${\bf q}\in \R^k_+$. Then $\a\in \partial W({\bf q})\cap {\rm ri}(A)$. Hence
\begin{eqnarray*}
g(\a)&=&\inf\{W({\bf x})-\a\cdot {\bf x}:\; {\bf x}\in \R^k\}=W({\bf q})-\a\cdot {\bf q}\\
&=&P_
{\bf \Phi}({\bf q})-\a\cdot {\bf q}=\inf\{P_{\bf \Phi}({\bf x})-\a\cdot {\bf x}:\; {\bf x}\in \R_+^k\}.
\end{eqnarray*}
This finishes the proof of (iii) and hence the proof of Theorem \ref{thm-1.2-h}.
\end{proof}

\begin{proof}[Proof of Theorem \ref{thm-1.2}] Here we show that Theorem \ref{thm-1.2} is just the one-dimensional version of Theorem \ref{thm-1.2-h}.
To see it, let $(X,T)$ be a TDS with $\htop(T)<\infty$ and let $\Phi=\{\log \phi_n\}_{n=1}^\infty$
be an asymptotically sub-additive potential  on $X$  satisfying $\overline{\beta}(\Phi)
>-\infty$. Let  $t>0$.  It is clear that $\partial^e P_{\Phi}(t)=\{
P'_{\Phi}(t-), P'_{\Phi}(t+)\}$ and $\partial
P_{\Phi}(t)=[P'_{\Phi}(t-), P'_{\Phi}(t+)]$.
Thus
\begin{equation}
\label{e-0331a}
\partial P_{\Phi}(\R_+)=\bigcup_{t>0} [P'_{\Phi}(t-),
P'_{\Phi}(t+)]\quad\mbox{  and }\quad \partial^e P_{\Phi}(\R_+)=\bigcup_{t>0}
\{P'_{\Phi}(t-), P'_{\Phi}(t+)\}.
\end{equation}
 Moreover,
\begin{equation}
\label{e-0331b}
\text{\rm cl}_+(\partial
P_{\Phi}(\R_+))=\partial P_{\Phi}(\R_+)\cup \{
P'_{\Phi}(\infty)\} \text{ and }\text{\rm cl}_+(\partial^e
P_{\Phi}(\R_+))=\partial^e P_{\Phi}(\R_+)\cup \{
P'_{\Phi}(\infty)\}.
\end{equation}
Furthermore,  by Lemma \ref{lem-mf-sa}(4),
$q\overline{\beta}(\Phi)\leq P_{\Phi}(q)\leq \htop(T)+
q\overline{\beta}(\Phi)$, from which we obtain
$P'_{\Phi}(\infty):=\lim_{q\to\infty}P_{\Phi}(q)/q=\overline{\beta}(\Phi)$. By the way, applying Theorem
\ref{thm-1.2-h}(ii), for each  $a\in \partial P_{\Phi}(t)$ and any $\epsilon>0$, there exists $\mu\in \M(X,T)$ such that
$|\Phi_*(\mu)-a|<\epsilon$. It implies that
\begin{equation}
\label{e-0331c}
(\lim_{t\to 0+}P_\Phi'(t-), P_\Phi^\prime(\infty))\subseteq {\rm int}(A)={\rm ri}(A),
\end{equation}
where $A:=\{a\in \R: \; a=\Phi_*(\mu) \mbox{ for some }
\mu\in \M(X,T)\}$. According to (\ref{e-0331a})-(\ref{e-0331c}), Theorem \ref{thm-1.2} just follows from
 Theorem
\ref{thm-1.2-h}.
\end{proof}

\subsection{A high-dimensional version of  Theorem \ref{thm-1.3}}

Let $\Phi_i=\{\log \phi_{n,i}\}_{n=1}^\infty$ ($i=1,\ldots,k$) be
asymptotically sub-additive potentials on a TDS $(X,T)$. Let ${\bf
\Phi}=(\Phi_1,\ldots,\Phi_k)$. For $\delta>0$, we define
\begin{equation}
\label{e-newno}
\text{cl}^\delta_+(\partial P_{{\bf \Phi}}(\R_+^k)):=
\text{cl}_+(\bigcup_{\{{\bf t}\in \mathbb{R}^k_+: \;t_i\ge \delta,\;
i=1,2,\cdots,k\}}\partial P_{{\bf \Phi}}({\bf t})),
\end{equation}
where $\text{cl}_{+}(A)$ is defines as in (\ref{e-0330a}).

The following theorem is a high dimensional version of Theorem  \ref{thm-1.3}.
\begin{thm}
\label{thm-1.3-h} Assume $\htop(T)<\infty$, $\overline{\beta}({\bf \Phi})>-\infty$, and that  the entropy map $\mu\mapsto h_\mu(T)$ is upper
semi-continuous on $\M(X,T)$. Then
\begin{itemize}
\item[(i)]For any ${\bf t}\in \mathbb{R}_+^k$, if $\a\in \partial^e
P_{{\bf \Phi}}({\bf t})$, then
$E_{{\bf \Phi}}(\a)\neq \emptyset$ and
$$
\htop(T,E_{{\bf \Phi}}(\a)) =\inf_{{\bf q}\in
\mathbb{R}_+^k}\{P_{{\bf \Phi}}({\bf q})-\a\cdot {\bf q}\}=P_{{\bf \Phi}}({\bf t})-\a\cdot {\bf t}.
$$
%Moreover the first equality is valid  when $\a
%\in ext(\text{cl}_+(\partial P_{{\bf \Phi}}))$.

\item[(ii)] For $\a\in \bigcup_{\delta>0} \text{\rm cl}^\delta_+(\partial
P_{{\bf \Phi}}(\R_+^k))$,
 $$
\inf_{{\bf q}\in \mathbb{R}_+^k}\{P_{{\bf \Phi}}({\bf
q})-\a\cdot {\bf q}\}=\max\{h_\mu(T):\; \mu\in
\M(X,T),\;{\bf \Phi}_*(\mu)=\a\}.
  $$

\item[(iii)] If~ ${\bf t}\in \mathbb{R}_+^k$ such that
${\bf t}\cdot {\bf \Phi}$ has a unique equilibrium
state $\mu_{\bf t}\in \mathcal{M}(X,T)$, then $\mu_{\bf t}$ is
ergodic, $P^\prime_{{\bf \Phi}} ({\bf
t})={\bf \Phi}_*(\mu_{\bf t})$,
$E_{{\bf \Phi}}(P^\prime_{{\bf \Phi}} ({\bf
t}))\neq\emptyset$ and
$\htop(T,E_{{\bf \Phi}}(P^\prime_{{\bf \Phi}} ({\bf
t})))=h_{\mu_{\bf t}}(T)$.
\end{itemize}
\end{thm}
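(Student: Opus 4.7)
The plan is to exploit the upper semi-continuity of the entropy map through Theorem \ref{lem-mfsa1-1}, which guarantees that for every ${\bf t}\in\R^k_+$ the equilibrium-state set $\mathcal{I}({\bf \Phi},{\bf t})$ is a nonempty compact convex subset of $\M(X,T)$, whose extreme points are ergodic, and whose ${\bf \Phi}_*$-image is exactly $\partial P_{\bf \Phi}({\bf t})$.

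For part (i), fix ${\bf t}\in\R^k_+$ and $\a\in\partial^e P_{\bf \Phi}({\bf t})$. The preimage $\mathcal{I}({\bf \Phi},{\bf t})\cap{\bf \Phi}_*^{-1}(\a)$ is a nonempty compact convex subset of $\M(X,T)$, so by Krein--Milman it has an extreme point $\mu$. I claim $\mu$ is in fact extreme in $\mathcal{I}({\bf \Phi},{\bf t})$: if $\mu=p\mu_1+(1-p)\mu_2$ with $\mu_i\in\mathcal{I}({\bf \Phi},{\bf t})$ and $p\in(0,1)$, then $\a={\bf \Phi}_*(\mu)$ writes as a convex combination of ${\bf \Phi}_*(\mu_1),{\bf \Phi}_*(\mu_2)\in\partial P_{\bf \Phi}({\bf t})$, and extremality of $\a$ forces ${\bf \Phi}_*(\mu_1)={\bf \Phi}_*(\mu_2)=\a$; extremality of $\mu$ in the preimage then yields $\mu_1=\mu_2=\mu$. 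By Theorem \ref{lem-mfsa1-1}(i), $\mu$ is ergodic with ${\bf \Phi}_*(\mu)=\a$ and $h_\mu(T)=P_{\bf \Phi}({\bf t})-\a\cdot{\bf t}$. Proposition \ref{pro-1195}(1) gives $\mu(E_{\bf \Phi}(\a))=1$, hence $E_{\bf \Phi}(\a)\neq\emptyset$ and, by Proposition \ref{pro-2.1}(3), $\htop(T,E_{\bf \Phi}(\a))\ge h_\mu(T)$. The matching upper bound follows from Corollary \ref{cor-3.8} applied at $\bq={\bf t}$ on letting $\epsilon\to 0^+$, and the common value equals $\inf_{\bq\in\R^k_+}\{P_{\bf \Phi}(\bq)-\a\cdot\bq\}$ because this infimum is attained at ${\bf t}$ by the subdifferential condition $\a\in\partial P_{\bf \Phi}({\bf t})$.

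For part (ii), the inequality $\sup\{h_\mu(T):{\bf \Phi}_*(\mu)=\a\}\le\inf_{\bq\in\R^k_+}\{P_{\bf \Phi}(\bq)-\a\cdot\bq\}$ is immediate from Theorem \ref{thm-3.3}. For the reverse, I would first treat $\a\in\partial P_{\bf \Phi}({\bf t})$ with ${\bf t}\in\R^k_+$: by Minkowski's theorem $\a=\sum_{j=1}^{k+1}\lambda_j\a_j$ with $\a_j\in\partial^e P_{\bf \Phi}({\bf t})$; part (i) supplies ergodic $\nu_j\in\mathcal{I}({\bf \Phi},{\bf t})$ with ${\bf \Phi}_*(\nu_j)=\a_j$ and $h_{\nu_j}(T)=P_{\bf \Phi}({\bf t})-\a_j\cdot{\bf t}$, so $\nu:=\sum\lambda_j\nu_j$ satisfies ${\bf \Phi}_*(\nu)=\a$ and $h_\nu(T)=P_{\bf \Phi}({\bf t})-\a\cdot{\bf t}$. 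For $\a$ in the larger closure $\text{cl}^\delta_+(\partial P_{\bf \Phi}(\R^k_+))$, write $\a=\lim_j\b_j$ with $\b_j\in\partial P_{\bf \Phi}({\bf t}_j)$, $t_{j,i}\ge\delta$, and $\a\ge\b_j$; the first case supplies $\mu_j\in\M(X,T)$ with ${\bf \Phi}_*(\mu_j)=\b_j$ and $h_{\mu_j}(T)=P_{\bf \Phi}({\bf t}_j)-\b_j\cdot{\bf t}_j$. If $({\bf t}_j)$ is bounded, a weak${}^*$ subsequential limit $\mu_j\to\mu$ with ${\bf t}_j\to{\bf t}^*$ satisfies $t^*_i\ge\delta$; upper semi-continuity of ${\bf \Phi}_*$ (Proposition \ref{pro-1195}(2)) and of entropy yield ${\bf \Phi}_*(\mu)\ge\a$ and $h_\mu(T)\ge P_{\bf \Phi}({\bf t}^*)-{\bf t}^*\cdot\a$, while Theorem \ref{thm-3.3} at $\bq={\bf t}^*$ forces ${\bf t}^*\cdot{\bf \Phi}_*(\mu)\le{\bf t}^*\cdot\a$; since ${\bf t}^*>0$ componentwise, this upgrades ${\bf \Phi}_*(\mu)\ge\a$ to ${\bf \Phi}_*(\mu)=\a$, and then $h_\mu(T)=P_{\bf \Phi}({\bf t}^*)-{\bf t}^*\cdot\a\ge\inf_{\bq}\{P_{\bf \Phi}(\bq)-\a\cdot\bq\}$. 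The remaining case where some $t_{j,i}\to\infty$ is the main technical obstacle; it will be handled by passing to a subsequence along which each entropy $h_{\mu_j}(T)\in[0,\htop(T)]$ converges and combining upper semi-continuity with the Legendre duality $h_{\mu_j}(T)=\inf_\bq\{P_{\bf \Phi}(\bq)-\bq\cdot\b_j\}$, using the uniform bound $t_{j,i}\ge\delta$ and the componentwise inequality $\a\ge\b_j$ to pin down ${\bf \Phi}_*(\mu)=\a$ at the limit.

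For part (iii), uniqueness forces $\mathcal{I}({\bf \Phi},{\bf t})=\{\mu_{\bf t}\}$, so by Theorem \ref{lem-mfsa1-1}(i) the measure $\mu_{\bf t}$ (being the unique and hence extreme element of $\mathcal{I}({\bf \Phi},{\bf t})$) is ergodic, and $\partial P_{\bf \Phi}({\bf t})=\{{\bf \Phi}_*(\mu_{\bf t})\}$ is a singleton; consequently $P_{\bf \Phi}$ is differentiable at ${\bf t}$ with $P'_{\bf \Phi}({\bf t})={\bf \Phi}_*(\mu_{\bf t})$. The remaining claims now follow by invoking part (i) with $\a=P'_{\bf \Phi}({\bf t})$, the unique extreme point of $\partial P_{\bf \Phi}({\bf t})$, which yields $E_{\bf \Phi}(P'_{\bf \Phi}({\bf t}))\neq\emptyset$ and $\htop(T,E_{\bf \Phi}(P'_{\bf \Phi}({\bf t})))=P_{\bf \Phi}({\bf t})-P'_{\bf \Phi}({\bf t})\cdot{\bf t}=h_{\mu_{\bf t}}(T)$.
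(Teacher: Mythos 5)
Your part (i) and part (iii) follow the paper's argument closely (with a small reorganization in (i): you argue extremality in $\mathcal{I}({\bf \Phi},{\bf t})$ and then invoke Theorem \ref{lem-mfsa1-1}(i), rather than proving extremality in $\M(X,T)$ directly — both are fine, though you implicitly use that $\mathcal{I}({\bf \Phi},{\bf t})\cap{\bf \Phi}_*^{-1}(\a)$ is compact, which does need a short usc argument). Your first case of part (ii), decomposing $\a$ by Minkowski into extreme points and averaging the ergodic measures from part (i), is a pleasant alternative to the paper, which instead extracts $\nu_j$ from Theorem \ref{thm-1.2-h}(ii) and passes to a weak${}^*$ limit; both routes work.

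The real issue is in the closure case of part (ii). You split into bounded and unbounded $({\bf t}_j)$, handle the bounded case, and then explicitly defer the unbounded case as ``the main technical obstacle'' with only a sketched plan (``combining upper semi-continuity with the Legendre duality \ldots''). As written, the sketch gives the entropy lower bound $h_\mu(T)\ge\inf_{\bf q}\{P_{\bf \Phi}({\bf q})-\a\cdot{\bf q}\}$ on a weak${}^*$ limit $\mu$, but it does not establish ${\bf \Phi}_*(\mu)=\a$, which is what is actually needed to put $\mu$ into the class over which one is maximizing. The paper's argument makes the split unnecessary and closes this gap in one stroke: having obtained $\mu_j$ with ${\bf \Phi}_*(\mu_j)=\a_j$, $h_{\mu_j}(T)=P_{\bf \Phi}({\bf t}_j)-\a_j\cdot{\bf t}_j$ and $\mu_j\to\mu$, one writes
\begin{equation*}
h_\mu(T)\ge\limsup_j\bigl(P_{\bf \Phi}({\bf t}_j)-\a\cdot{\bf t}_j\bigr)\ge\limsup_j\bigl(h_\mu(T)+({\bf \Phi}_*(\mu)-\a)\cdot{\bf t}_j\bigr)\ge h_\mu(T)+\delta\sum_{i=1}^k\bigl((\Phi_i)_*(\mu)-a_i\bigr),
\end{equation*}
where the second inequality is the variational principle (Theorem \ref{thm-3.3}) applied at each ${\bf t}_j$ (not at a limit point, so no boundedness is used), and the third uses $t_{j,i}\ge\delta$ together with ${\bf \Phi}_*(\mu)\ge\a$ from upper semi-continuity. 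Since each summand is $\ge0$, this forces ${\bf \Phi}_*(\mu)=\a$, and the first inequality then gives $h_\mu(T)\ge\inf_{\bf q}\{P_{\bf \Phi}({\bf q})-\a\cdot{\bf q}\}$. The key insight you are missing is to apply Theorem \ref{thm-3.3} at each ${\bf t}_j$ rather than only at a limit ${\bf t}^*$; once you do, the bounded/unbounded dichotomy evaporates.
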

\begin{proof}
We first prove (i). Fix ${\bf t}\in \mathbb{R}^k_+$. By Theorem
\ref{lem-mfsa1-1}, $\mathcal{I}({\bf \Phi},{\bf t})$ is
a non-empty compact convex subset of $\mathcal{M}(X,T)$. We claim
that for any $\b\in \mathbb{R}^k$ the set
$$\mathcal{I}_{\b}({\bf \Phi},{\bf t}):=
\{\nu\in \mathcal{I}({\bf \Phi},{\bf t}):\
{\bf \Phi}_*(\nu)=\b\}$$ is compact and
convex. The convexity is clear. To show the compactness, assume that
$\{\nu_n\}\subset
\mathcal{I}_{\b}({\bf \Phi},{\bf t})$
and $\nu_n$ converges to $\nu$ in $\mathcal{M}(X,T)$. Then by the
upper semi-continuity of $h_{(\cdot)}(T)$ and $(\Phi_i)_*(\cdot)$,  $i=1,\ldots,k$, we have
$$h_\nu(T)+{\bf t}\cdot{\bf \Phi}_*(\nu)\geq \lim_{n\to
\infty}h_{\nu_n}(T)+{\bf t}\cdot {\bf \Phi}_*(\nu_n)=P_{{\bf \Phi}}({\bf t}).$$
By Theorem \ref{thm-3.3}, $\nu\in
\mathcal{I}({\bf \Phi},{\bf t})$ and furthermore,
$h_\nu(T)=h_{\nu_n}(T)=P_{{\bf \Phi}}({\bf
t})-{\bf t}\cdot \b$ and $
{\bf \Phi}_*(\nu)={\bf \Phi}_*(\nu_n)=\b$.
This is,
 $\nu\in \mathcal{I}_{\b}({\bf \Phi},{\bf t})$.
Hence $\mathcal{I}_{\b}({\bf \Phi},{\bf
t})$ is compact. This finishes the proof of the claim.

Now let $\a\in \partial^e P_{{\bf \Phi}}({\bf
t})$. By  Theorem \ref{lem-mfsa1-1}, the set
$\mathcal{I}_{\a}({\bf \Phi},{\bf t})$
is non-empty. We are going to show further that  $\mathcal{I}_{\a}({\bf \Phi},{\bf t})$ contains at least one  ergodic
measure. Since
$\mathcal{I}_{\a}({\bf \Phi},{\bf t})$
 is a non-empty
compact convex subset of $\mathcal{M}(X,T)$, by the Krein-Milman
theorem (c.f. \cite[p. 146]{Con-book}), it contains at least one
extreme point, denoted by $\nu$. Let $\nu=p\nu_1+(1-p)\nu_2$ for
some $0<p<1$ and $\nu_1,\nu_2\in \mathcal{M}(X,T)$. Then
\begin{align*}
P_{{\bf \Phi}}({\bf t})&=h_\nu(T)+{\bf t}\cdot {\bf \Phi}_*(\nu) \\
&=p(h_{\nu_1}(T)+{\bf t}\cdot {\bf \Phi}_*(\nu_1))+(1-p)(h_{\nu_2}(T)+{\bf t}\cdot {\bf \Phi}_*(\nu_2)).
\end{align*}
By Theorem \ref{thm-3.3}, $\nu_1,\nu_2\in
\mathcal{I}({\bf \Phi},{\bf t})$. By Theorem
\ref{lem-mfsa1-1},
${\bf \Phi}_*(\nu_1),{\bf \Phi}_*(\nu_2)\in
\partial P_{{\bf \Phi}}({\bf t})$. Moreover,
note that
$\a={\bf \Phi}_*(\nu)=p{\bf \Phi}_*(\nu_1)
+(1-p){\bf \Phi}_*(\nu_2)$, we have
${\bf \Phi}_*(\nu_1)={\bf \Phi}_*(\nu_2)=\a$ since
$\a\in \partial^e P_{{\bf \Phi}}({\bf t})$. That
is, $\nu_1,\nu_2\in
\mathcal{I}_{\a}({\bf \Phi},{\bf t})$.
Since $\nu$ is an extreme point of
$\mathcal{I}_{\a}({\bf \Phi},{\bf t})$,
we have $\nu_1=\nu_2=\nu$. It follows that $\nu$ is an extreme point
of $\mathcal{M}(X,T)$, i.e., $\nu$ is ergodic. By Proposition
\ref{pro-1195}(1), we have
$\nu(E_{{\bf \Phi}}(\a))=1$, and thus
by Proposition \ref{pro-2.1}(3),
$$h_{\text{top}}(T,E_{{\bf \Phi}}(\a))
\geq h_{\nu}(T)=P_{{\bf \Phi}}({\bf t})-{\bf t}\cdot {\bf a}
=\inf_{{\bf q}\in
\mathbb{R}^k_+}\{P_{{\bf \Phi}}({\bf q})-{\bf q}\cdot \a\}.$$ However by Corollary \ref{cor-3.8}, the upper bound
$h_{\text{top}}(T,E_{{\bf \Phi}}(\a))\leq
\inf_{{\bf q}\in \mathbb{R}^k_+}\{ P_{{\bf \Phi}}({\bf
q})-{\bf q}\cdot \a\}$ is generic. Thus we have the
equality
$$
h_{\text{top}}(T,E_{{\bf \Phi}}(\a))=h_{\nu}(T)
=P_{{\bf \Phi}}({\bf t})-{\bf t}\cdot \a=
\inf_{{\bf q}\in \mathbb{R}^k_+}\{ P_{{\bf \Phi}}({\bf
q})-{\bf q}\cdot \a\}.
$$
This finishes the  proof of  part (i).

\medskip
To show (ii), by Theorem \ref{thm-1.2-h}(ii) we need only to show
that for $\a\in \text{cl}_+^\delta(\partial
P_{{\bf \Phi}})$,
 $$
\inf_{{\bf q}\in \mathbb{R}_+^k}\{P_{{\bf \Phi}}({\bf
q})-{\bf a}\cdot {\bf q}\}\le \max\{h_\mu(T):\; \mu\in
\M(X,T),\;{\bf \Phi}_*(\mu)=\a\}.
$$
To see it, we first assume that $\a\in
\partial P_{{\bf \Phi}}({\bf t})$
for some ${\bf t}\in \mathbb{R}^k_+$.  By Theorem
\ref{thm-1.2-h}(ii), there exists $(\nu_j)\subset \mathcal{M}(X,T)$ such
that
$$\lim_{j\rightarrow \infty}
{\bf \Phi}_*(\nu_j)=\a \text{ and
}\limsup_{j\rightarrow \infty}h_{\nu_j}(T)\ge \inf_{{\bf q}\in
\mathbb{R}_+^k}\{P_{{\bf \Phi}}({\bf q})-{\bf a}\cdot {\bf q}\}.$$ Extract a subsequence if necessary so that
$\lim_{j\rightarrow \infty}\nu_j=\nu$ for some $\nu\in
\mathcal{M}(X,T)$. Then
$$h_\nu(T)\ge \limsup_{j\rightarrow
\infty}h_{\nu_j}(T)\ge \inf_{{\bf q}\in
\mathbb{R}_+^k}\{P_{{\bf \Phi}}({\bf q})-{\bf a}
\cdot {\bf q}\}=P_{{\bf \Phi}}({\bf t})-\a \cdot {\bf t}$$
and ${\bf \Phi}_*(\nu)\ge \limsup_{j\rightarrow
\infty}{\bf \Phi}_*(\nu_j)=\a$
by the
upper-semi continuity of $h_{(\cdot)}(T)$ and $(\Phi_i)_*(\cdot)$. Hence
$$h_\nu(T)\ge P_{{\bf \Phi}}({\bf t})-\a \cdot {\bf t}
\ge P_{{\bf \Phi}}({\bf t})-{\bf \Phi}_*(\nu)\cdot {\bf t}\ge h_\nu(T),$$ which implies
${\bf \Phi}_*(\nu)=\a$ and $h_\nu(T)=
P_{{\bf \Phi}}({\bf t})-\a\cdot {\bf t}$.

Next we assume that $\a\in \text{cl}^\delta_+(\partial
P_{\bf \Phi}(\R_+^k))$ for some $\delta>0$. Then there exists a sequence
$({\bf t}_j)\in \mathbb{R}_+^k$ such that each entry of ${\bf t}_j$ is greater than $\delta$, and there exists
$\a_j\in
\partial^e P_{{\bf \Phi}}({\bf t}_j)$ for each $j$  such that $\a\geq\a_j$
 and $\lim \limits_{j\rightarrow
\infty}\a_j=\a$. By the above
discussion, for each $j\in \mathbb{N}$ there exists $\mu_j\in
\mathcal{M}(X,T)$ such that
${\bf \Phi}_*(\mu_j)=\a_j$  and $h_{\mu_j}(T)=
P_{{\bf \Phi}}({\bf t}_j)-\a_j\cdot {\bf  t}_j$.
Extract a subsequence if necessary so that  $\lim_{j\rightarrow
\infty}\mu_j=\mu$ for some $\mu\in \mathcal{M}(X,T)$. Thus
${\bf \Phi}_*(\mu)\geq \lim_{j\to \infty}{\bf \Phi}_*(\mu_j)=\a$ and
\begin{align*}
h_\mu(T)&\ge \limsup_{j\rightarrow \infty} h_{\mu_j}(T)=
\limsup_{j\rightarrow \infty} (P_{{\bf \Phi}}({\bf
t}_j)-\a_j\cdot {\bf  t}_j)\\
&\ge \limsup_{j\rightarrow \infty} (P_{{\bf \Phi}}({\bf
t}_j)-\a\cdot {\bf  t}_j)\\
&\ge \limsup_{j\rightarrow
\infty} ( h_\mu(T)+({\bf \Phi}_*(\mu)-\a)\cdot {\bf t_j} )\\
&\ge h_\mu(T)+\sum_{i=1}^k((\Phi_i)_*(\mu)-a_i)\delta.
\end{align*}
This implies that ${\bf \Phi}_*(\mu)=\a$ and
$$h_\mu(T)
\ge \limsup_{j\rightarrow \infty} (P_{{\bf \Phi}}({\bf
t}_j)-\a\cdot {\bf  t}_j)
\ge
\inf_{{\bf q}\in \mathbb{R}^k_+}\{ P_{{\bf \Phi}}({\bf
q})-\a\cdot {\bf q}\}.$$
This finishes the proof of part (ii).

\medskip
Now we turn to prove (iii). We assume ${\bf t}\in
\mathbb{R}^k_+$ such that ${\bf t}\cdot {\bf \Phi}$ has
a unique equilibrium state $\mu_{\bf t}$. By Theorem
\ref{lem-mfsa1-1}, $\partial P_{{\bf \Phi}}({\bf t})=\{
{\bf \Phi}_*(\mu_t)\}$. Now (iii) comes from  parts (i)
and (ii) of the theorem.
\end{proof}

\begin{proof}[Proof of Theorem \ref{thm-1.3}]
It follows directly from Theorem \ref{thm-1.3-h},  using the fact that in the one-dimensional case, 
$\partial^eP_{\Phi}(t)=\{P_\Phi'(t+), P_\Phi'(t-)\}$ for $t>0$  and 
$$\bigcup_{\delta>0} \text{\rm cl}^\delta_+(\partial
P_{{ \Phi}}(\R_+))=\bigcup_{t>0}[P'_\Phi(t-), P_\Phi'(\infty)].$$
\end{proof}

\begin{re}\label{re-4.10}  Theorem \ref{thm-1.3}(i) has a nice application in the multifractal analysis of measures on symbolic spaces.
 Let $\mu$ be a fully supported
Borel probability measure
on the one-sided  full shift space $(\Sigma,\sigma)$ over a finite alphabet. Assume in addition that
\begin{equation}
\label{e-claim}
\mu(I_{n+m}(x))\leq C\mu(I_n(x))\mu(I_m(\sigma^n x)),\qquad \forall\; x\in \Sigma,\; n,m\in \N,
\end{equation}
where $C>0$ is a constant and $I_n(y)$ denotes the $n$-th cylinder in $\Sigma$ containing $y$.
Let $\Phi$ be a potential on $\Sigma$ given by $\Phi=\{\log \mu(I_n(x))\}_{n=1}^\infty$. By applying a general multifractal
result of Ben Nasr \cite{Ben94}, Testud \cite{Tes06} obtained that (formulated in our terminologies as)
$$\htop(\sigma, E_\Phi(\alpha))=\inf\{P_\Phi(q)-\alpha q:\; q>0\} \quad \mbox{whenever $\alpha=P'(t)$ for some $t>0$ },$$
provided that $P_\Phi'(t)$ exists at $t$.
However by Proposition \ref{pro-le}(i),  $\Phi$ is asymptotically sub-additive, hence by Theorem
\ref{thm-1.3}(i),
the above variational relation actually holds for any $\alpha=P_\Phi'(t+)$ and $\alpha=P'_\Phi(t-)$ for each $t>0$. Furthermore, the constant $C$ in \eqref{e-claim}
can be replaced by $C_n$, where $(C_n)$ is a sequence of positive numbers satisfying $\lim_{n\to \infty}(1/n)\log C_n=0$ (cf. Remark \ref{re-last}(iii)).

\end{re}

\subsection{Lyapunov spectrum for certain sub-additive potentials on symbolic spaces}
In this subsection, we assume that $(X,T)$ is the one-sided full shift over an finite set  $\{1,2,\ldots,m\}$. That is,
$X=\{1,\ldots,m\}^\N$ endowed with the standard metric $d(x,y)=2^{-n}$ for $x=(x_i)_{i=1}^\infty$ and $(y_i)_{i=1}^\infty$, where $n$
is the largest integer so that $x_j=y_j$ for $1\leq j\leq n$, and  $T$ is the shift map given by
$(x_i)_{i=1}^\infty\mapsto (x_{i+1})_{i=1}^\infty$.

Let $X^*$ denote the collection of finite words over $\{1,\ldots,m\}$, i.e.,
$X^*=\bigcup_{i=1}^\infty \{1,\ldots,m\}^i$.
Assume that $\phi:\; X^*\to [0,\infty)$ is a map (not identically equal to $0$) satisfying the following two assumptions:
\begin{itemize}
\item[(H1)] $\phi(IJ)\leq \phi(I)\phi(J)$ for any $I,J\in X^*$;
\item[(H2)] There exist a sequence of positive integers $(t_n)$ and a sequence of positive numbers $(c_n)$ with
$\lim_{n\to \infty} t_n/n=0=\lim_{n\to \infty} (1/n)\log c_n$, such that for each $I, J\in X^*$ with lengths
$|I|\geq n$, $|J|\geq n$,  there exists $K\in X^*$ with $|K|\leq t_n$ so that
$$
\phi(IKJ)\geq c_n\phi(I)\phi(J).
$$
\end{itemize}

Let $\Phi=(\log \phi_n)$ be a potential on $X$ given by
$\phi_n(x)=\phi(x_1\ldots x_n)$ for $x=(x_i)_{i=1}^\infty$. It is clear that
$\Phi$ is sub-additive.
Denote
$$
P(q):=\limsup_{n\to \infty}\frac{1}{n}\log \sum \phi(I)^q,
$$
where the sum is taken over the set of  $I\in \{1,\ldots,m\}^n$ with $\phi(I)>0$. It is clear that
$P(q)=P(T,q\Phi)$ for $q>0$. Although $\Phi$ is not necessary to be asymptotically additive, we still have the following
rather complete result, as an analogue of our recent work \cite{Fen08} on the norm of
 matrix products.

\begin{thm}
\label{thm-new1}
Let $\Phi$ be given as above. Assume that $P(q)\in \R$ for each $q<0$. Then $\{\alpha\in \R:\; E_\Phi(\alpha)\neq \emptyset\}=\R\cap [a,b]$, where
$a=\lim_{n\to -\infty}P(q)/q$ and $b=\lim_{n\to \infty}P(q)/q$. Furthermore, for $\alpha\in \R\cap [a,b]$,
$$
\htop(T, E(\alpha))=\inf\{P(q)-aq:\; q\in \R\}.
$$
\end{thm}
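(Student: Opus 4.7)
The plan is to establish the full multifractal formalism by constructing a family of ergodic ``Gibbs-like'' measures $\{\mu_q\}_{q\in\R}$ that parameterize the Lyapunov spectrum, mirroring the strategy of \cite{Fen08} for matrix-norm potentials. The easy inclusion $\{\alpha:E_\Phi(\alpha)\neq\emptyset\}\subseteq[a,b]$ follows from Lemma \ref{lem-3.2}: any weak-$*$ limit of $\frac{1}{n}\sum_{j=0}^{n-1}\delta_{T^jx}$ produces an invariant $\mu$ with $\Phi_*(\mu)\geq\lambda_\Phi(x)$, so every finite $\lambda_\Phi(x)$ lies in the closed interval of values taken by $\Phi_*$ on $\M(X,T)$, which coincides with $[a,b]$ by convex duality and the finiteness of $P$ on $\R$.

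For the substantive direction, I would first verify that $P(q)$ is real-valued and convex on $\R$: H\"older applied to $Z_n(q):=\sum_{|I|=n,\,\phi(I)>0} \phi(I)^q$ yields convexity, realness is automatic on $[0,\infty)$ from (H1) together with $\htop(T)=\log m<\infty$, and is the standing hypothesis on $(-\infty,0)$. Using (H2), concatenating two blocks through a connecting word $K$ of length $\leq t_n$ gives an approximate super-multiplicativity of the form $Z_{n+|K|+m}(q)\geq c_n^{|q|}Z_n(q)Z_m(q)$ up to bounded combinatorial factors, and a Fekete-type argument leveraging $t_n/n\to 0$ and $(1/n)\log c_n\to 0$ then promotes $\limsup$ to an honest limit. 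Next, for each $q\in\R$ I would construct an ergodic measure $\mu_q$ satisfying $h_{\mu_q}(T)+q\Phi_*(\mu_q)=P(q)$ by the Bernoulli-approximant scheme of \cite{Fen08}: weight the $n$-cylinders by $\phi(I)^q/Z_n(q)$, Ces\`aro-average under $T$ to restore invariance, extract a weak-$*$ limit $\nu_q$, use (H2) to prove that $\nu_q$ is weakly Gibbs for $q\Phi$ and hence realizes $P(q)$ as its free energy, and ergodic-decompose to obtain the required $\mu_q$.

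Since $\Phi_*(\mu_q)\in\partial P(q)$, convexity of $P$ makes the map $q\mapsto\Phi_*(\mu_q)$ sweep out a set dense in $(a,b)$. For every $\alpha\in(a,b)$ one then derives $\htop(T,E_\Phi(\alpha))=\inf_{q\in\R}\{P(q)-\alpha q\}$ as follows: the upper bound comes from a separated-set count in the spirit of Lemma \ref{lem-3.3} and Corollary \ref{cor-3.8}, which extends to $q<0$ thanks to the Gibbs-type lower bound furnished by (H2); the lower bound follows from $\mu_q(E_\Phi(\Phi_*(\mu_q)))=1$ (Kingman's theorem plus ergodicity) combined with Proposition \ref{pro-2.1}(3), giving $\htop(T,E_\Phi(\alpha))\geq h_{\mu_q}(T)=P(q)-q\alpha$, while $\inf_{q'\in\R}\{P(q')-\alpha q'\}=P(q)-q\alpha$ follows from $\alpha\in\partial P(q)$. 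The endpoints $\alpha=a,b$ are handled by passing to the limit $q\to\pm\infty$ in the manner of Theorem \ref{thm-1.1}. The main obstacle is the construction of the equilibrium state $\mu_q$ for $q<0$: the potential $q\Phi$ is no longer sub-additive and $\phi$ may vanish on many cylinders, so the thermodynamic formalism of Section \ref{S-3} cannot be invoked off the shelf. One must exploit (H2) directly to propagate the sub-exponential gap $t_n$ and error factor $c_n$ through the Bernoulli-approximant construction without contaminating the Gibbs-like estimate that underlies the variational equality---this is precisely the technical heart of the argument of \cite{Fen08}.
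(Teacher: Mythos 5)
Your outline correctly identifies the high-level strategy and the easy inclusion, and it puts its finger on the right technical hurdle (making sense of the thermodynamic formalism for $q<0$, where $q\Phi$ is no longer sub-additive and $\phi$ vanishes on many cylinders). The paper's own proof is a single-sentence reference to \cite{Fen08}, so at that level of granularity your sketch and the paper's proof are aiming at the same target. However, the lower bound in your argument has a genuine gap.

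Your lower bound for $\htop(T,E_\Phi(\alpha))$ goes through an ergodic equilibrium state $\mu_q$ with $\mu_q(E_\Phi(\Phi_*(\mu_q)))=1$, then invokes Proposition~\ref{pro-2.1}(3). This chain only bounds $\htop(T,E_\Phi(\Phi_*(\mu_q)))$, not $\htop(T,E_\Phi(\alpha))$, and the two coincide only if $\Phi_*(\mu_q)=\alpha$. You only establish that the image of $q\mapsto\Phi_*(\mu_q)$ is \emph{dense} in $(a,b)$; when $P$ fails to be differentiable at some $q$ (which (H1)--(H2) do not rule out), the interior of $\partial P(q)$ is missed entirely by ergodic equilibrium states, and your Bernoulli-approximant/ergodic-decomposition step offers no control over which extreme point of $\partial P(q)$ the resulting $\mu_q$ selects. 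This is exactly the failure mode that Example~1.1 of the paper exhibits in the absence of irreducibility, and the whole point of Theorem~\ref{thm-new1} is that (H2) removes the degeneracy \emph{even without any differentiability or unique-equilibrium assumption}. The argument in \cite{Fen08} closes this gap by a mechanism you do not invoke: it uses the quasi-multiplicativity (H2) to concatenate, through connecting words $K$ of sub-linear length $t_n$ and error $c_n$, blocks of cylinders with different approximate Lyapunov values, and directly constructs a Moran-type Cantor subset of $E_\Phi(\alpha)$ of prescribed topological entropy for \emph{every} $\alpha$ in $[a,b]$ -- not only those realized by ergodic equilibrium states. Without this direct set construction (or an equivalent device), your proof establishes the formula only for the $\alpha$'s of the form $\Phi_*(\mu_q)$, which is strictly weaker than what the theorem asserts.
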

\begin{proof}[Proof of Theorem \ref{thm-new1}] Take a slight modification of the proof of Theorem 1.1 in \cite{Fen08}.
\end{proof}
We remark that under the condition of the above theorem, we always have $b\in \R$.
However it is possible that $a=-\infty$.

There are some natural maps  $\phi:\; X^*\to [0,\infty)$ which satisfy the assumptions (H1)-(H2).
For example, if $\{M_i\}_{i=1}^m$ is a family of $d\times d$ real matrices so that  there is no trivial proper linear subspace
$V\subset {\Bbb R}^d$ with $M_iV\subseteq V$ for all $i$, then the map $\phi$ defined by $\phi(x_1\ldots x_n)=
\|M_{x_1}\ldots M_{x_n}\|$ satisfies (H1)-(H2) (see \cite{Fen08, FaSl09}).   More generally, the singular value functions
for $M_{x_1}\ldots M_{x_n}$ also satisfy (H1)-(H2) when $\{M_i\}_{i=1}^m$ satisfies further mild irreducibility conditions
(see \cite{FaSl09}).

\section{The multifractal formalism for asymptotically additive potentials}
\label{S-5}
\setcounter{equation}{0}

Let $(X,T)$ be a TDS. Let $k\in \N$ and let $\Phi_i=\{\log \phi_{n,i}\}_{n=1}^\infty$, $\Psi_i=\{\log \psi_{n,i}\}_{n=1}^\infty$
 ($i=1,2,\cdots,k$) be asymptotically additive
potentials on $(X,T)$. Furthermore assume
\begin{equation}\label{assump}
\psi_{n,i}(x)\geq C(1+\delta)^n\qquad(i\in \{1,2,\cdots,k\},\; n\in
\N,\; x\in X)
 \end{equation}
 for some constants $C,\delta>0$. This assumption guarantees that
 $(\Psi_i)_*(\mu)\neq 0$ ($i=1,2,\cdots,k$) for each $\mu\in \M(X,T)$.

For $\a=(a_1,\ldots,a_k)\in \R^k$, denote
\begin{equation}
\label{e-ebfa}
E(\a):=\left\{x\in X:\;\lim_{n\to \infty}\frac{\log \phi_{n,i}(x)}
{\log \psi_{n,i}(x)}=a_i \mbox{ for }1\leq i\leq k\right\}.
\end{equation}
In this section, we shall study the multifractal structure of $E(\a)$.

For $\mu\in \M(X,T)$, the set $G_\mu$ of {\it $\mu$-generic points} is defined by
$$
G_\mu:=\left\{x\in X:\; \frac{1}{n}\sum_{j=0}^{n-1}\delta_{T^jx}\to \mu \mbox{ in the weak* topology as $n\to \infty$}
\right\},
$$
where $\delta_y$ denotes the probability measure whose support is the single point $y$.
Bowen \cite{Bow73} showed that $\htop(T, G_\mu)\leq h_\mu(T)$ for any $\mu\in \M(X,T)$.
\begin{de}
A TDS $(X,T)$ is called to be {\it saturated} if for any $\mu\in \M(X,T)$, we have $G_\mu\neq \emptyset$ and $\htop(T, G_\mu)=h_\mu(T)$.
\end{de}

It was shown independently in \cite{FLP08,PfSu07} that if a TDS satisfies the specification property
(or a weaker form), then it is saturated. The main result in this section is the following.

\begin{thm}
\label{thm-6.1} Let  $(X,T)$ be a TDS and let $\Phi_i, \Psi_i$
($i=1,2,\cdots,k$) be asymptotically additive potentials on $X$
satisfying the assumption (\ref{assump}). Let $\Omega\subset \R^k$
be the range of the following map from $\M(X,T)$ to $\R^k$:
$$\mu\to \left(\frac{(\Phi_1)_*(\mu)}{(\Psi_1)_*(\mu)}, \;\frac{(\Phi_2)_*(\mu)}{(\Psi_2)_*(\mu)}, \cdots,
\;\frac{(\Phi_k)_*(\mu)}{(\Psi_k)_*(\mu)}\right).$$ For
$\a\in \Omega$, write
\begin{equation}\label{e-tt22}
H(\a)=\sup\{h_\mu(T): \; \mu\in \M(X,T),\;
(\Phi_i)_*(\mu)=a_i(\Psi_i)_*(\mu) \mbox{ for } i=1,2,\cdots,k\}.
\end{equation}
Then we have the following properties:
\begin{itemize}
\item[(i)] $\{\a\in\R^k:\; E(\a)\neq \emptyset\}\subseteq \Omega$.
\item[(ii)] If $\htop(T)<\infty$, then we have
$$
\a\in \Omega \Longleftrightarrow \inf\{P_\a({\bf q}):\; {\bf q}\in \R^k\}\neq -\infty\Longleftrightarrow \inf\{P_\a({\bf q}):\; {\bf q}\in \R^k\}\geq 0,
$$
where $P_{\a}({\bf q}):=P\left(T, \sum
\limits_{i=1}^k q_i(\Phi_i-a_i\Psi_i)\right)$.

\item[(iii)] Assume  that $\htop(T)<\infty$ and the entropy map is upper semi-continuous. Then for any
$\a\in \Omega$,
$$H(\a)=\inf_{{\bf q}\in \R^k}P_{\a}({\bf q}),
$$

\item[(iv)] Assume that $(X,T)$ is saturated. Then
$E(\a)\neq \emptyset$ if and only if
$\a\in \Omega$. Furthermore $$\htop
(T,E(\a))=H(\a),\qquad \forall\;
\a\in \Omega.$$

\end{itemize}
 \end{thm}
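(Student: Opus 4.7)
The plan is to package all four statements through the single auxiliary affine map $f\colon \M(X,T)\to \R^k$ defined by
$$f(\mu) = \bigl((\Phi_1)_*(\mu)-a_1(\Psi_1)_*(\mu),\ldots,(\Phi_k)_*(\mu)-a_k(\Psi_k)_*(\mu)\bigr).$$
Because each $\Phi_i,\Psi_i$ is asymptotically additive, the functionals $\mu\mapsto(\Phi_i)_*(\mu)$ and $\mu\mapsto(\Psi_i)_*(\mu)$ are affine and continuous on the compact convex set $\M(X,T)$, so $A:=f(\M(X,T))$ is a compact convex subset of $\R^k$; moreover hypothesis \eqref{assump} gives $(\Psi_i)_*(\mu)\geq\log(1+\delta)>0$ for every $\mu$. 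Writing $g(\a'):=\sup\{h_\mu(T):f(\mu)=\a'\}$ for $\a'\in A$, an application of Theorem~\ref{thm-3.3} to the asymptotically additive potential $\sum_{i=1}^{k}q_i(\Phi_i-a_i\Psi_i)$ yields
$$P_\a(\bq) \;=\; \sup_{\mu\in \M(X,T)}\bigl\{h_\mu(T)+\bq\cdot f(\mu)\bigr\} \;=\; \sup_{\a'\in A}\bigl\{g(\a')+\bq\cdot\a'\bigr\}.$$
Note that $\a\in\Omega\iff 0\in A$ and $H(\a)=g(0)$.

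For (i), given $x\in E(\a)$ and $\mu\in V(x)$ along a subsequence $n_j\to\infty$ with $\mu_{x,n_j}\to\mu$, Lemma~\ref{lem-3.2} applied to both $\pm\Phi_i$ and $\pm\Psi_i$ (all asymptotically sub-additive) yields $n_j^{-1}\log\phi_{n_j,i}(x)\to(\Phi_i)_*(\mu)$ and $n_j^{-1}\log\psi_{n_j,i}(x)\to(\Psi_i)_*(\mu)$; dividing the two and using $(\Psi_i)_*(\mu)>0$ gives $(\Phi_i)_*(\mu)=a_i(\Psi_i)_*(\mu)$, so $f(\mu)=0$ and $\a\in\Omega$. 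For (ii), the implication $\a\in\Omega\Rightarrow\inf_\bq P_\a(\bq)\geq 0$ is immediate from the variational identity applied to any $\mu$ with $f(\mu)=0$. Conversely, if $0\notin A$, the compactness and convexity of $A$ produce via the finite-dimensional separating hyperplane theorem a $\bq_0\in\R^k$ and $\epsilon>0$ with $\bq_0\cdot\a'\leq-\epsilon$ for every $\a'\in A$; then $P_\a(t\bq_0)\leq\htop(T)-t\epsilon\to-\infty$ as $t\to\infty$.

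For (iii), the direction $H(\a)\leq\inf_\bq P_\a(\bq)$ follows by taking $\a'=0$ in the variational identity. For the reverse inequality I would invoke Corollary~\ref{cor-0330}(ii); since $A$ is closed, it suffices to verify upper semi-continuity of the concave function $g$ on $A$ (at least at $0$). This is where the hypothesis on the entropy map enters: given $\a'_n\to\a'$ in $A$, one picks $\mu_n$ attaining $g(\a'_n)$ (the sup is attained because the fiber $f^{-1}(\a'_n)\subset\M(X,T)$ is compact and $h_{(\cdot)}(T)$ is u.s.c.), and any weak-$*$ limit $\mu$ of $(\mu_n)$ satisfies $f(\mu)=\a'$ by continuity of $f$ and $h_\mu(T)\geq\limsup h_{\mu_n}(T)=\limsup g(\a'_n)$ by u.s.c.\ of entropy, so $g(\a')\geq\limsup g(\a'_n)$. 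Corollary~\ref{cor-0330}(ii) then gives $\inf_\bq P_\a(\bq)=g(0)=H(\a)$.

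For (iv), if $\a\in\Omega$ pick any $\mu$ with $f(\mu)=0$; saturation gives $G_\mu\neq\emptyset$ with $\htop(T,G_\mu)=h_\mu(T)$, and for each $x\in G_\mu$ Lemma~\ref{lem-3.2} applied to $\pm\Phi_i,\pm\Psi_i$ yields $n^{-1}\log\phi_{n,i}(x)\to(\Phi_i)_*(\mu)$ and $n^{-1}\log\psi_{n,i}(x)\to(\Psi_i)_*(\mu)$, so dividing gives $x\in E(\a)$; thus $G_\mu\subseteq E(\a)$ and $\htop(T,E(\a))\geq h_\mu(T)$, and taking supremum over admissible $\mu$ yields $\htop(T,E(\a))\geq H(\a)$. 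Conversely, by the argument in (i) every $\mu\in V(x)$ with $x\in E(\a)$ satisfies $f(\mu)=0$, hence $h_\mu(T)\leq H(\a)$, so $E(\a)\subseteq R(H(\a))$ in the notation of Lemma~\ref{Bow}, and Bowen's bound yields $\htop(T,E(\a))\leq H(\a)$. The ``iff'' for nonemptiness then combines (i) with the inclusion $G_\mu\subseteq E(\a)$. The main obstacle I anticipate is the upper semi-continuity of $g$ on $A$ in (iii): this is the decisive ingredient permitting the Legendre duality at the boundary point $0\in A$, which in general need not lie in $\mathrm{ri}(A)$, and it is precisely here that the hypothesis on the entropy map becomes essential.
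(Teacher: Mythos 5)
Your proposal is correct and follows essentially the same route as the paper: both use the affine map $\tau_\a(\mu)=\bigl((\Phi_i)_*(\mu)-a_i(\Psi_i)_*(\mu)\bigr)_i$ to reduce everything to the compact convex image $A$, prove (ii) via hyperplane separation together with the variational principle (Theorem~\ref{thm-3.3}), prove (iii) via Corollary~\ref{cor-0330}(ii) applied to $g(\a')=\sup\{h_\mu(T):\tau_\a(\mu)=\a'\}$, and prove (iv) via Bowen's Lemma~\ref{Bow} together with $G_\mu\subseteq E(\a)$ on saturated systems. The only cosmetic differences are that you apply Lemma~\ref{lem-3.2} to $\pm\Phi_i,\pm\Psi_i$ rather than invoking the packaged Lemma~\ref{lem-3.4}(ii), and you spell out the upper semi-continuity of $g$ at the boundary point $0\in A$ (which the paper asserts but does not detail); both are correct and in the spirit of the original argument.
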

\bigskip

We emphasize that in parts (i)-(iii) of the above theorem, we do not need to assume that $(X,T)$ is saturated.

\begin{proof}We first prove (i). Assume that $E(\a)\neq \emptyset$ for some $\a=(a_1,\ldots,a_k)\in \R^k$. Take $x\in E(\a)$. Denote $\mu_{x,n}=(1/n)
\sum_{j=1}^{n-1}\delta_{T^jx}$. Then there exists $n_j\uparrow \infty$ so that $\mu_{x,n_j}\to \mu$ for some $\mu\in \M(X,T)$.
Apply Lemma \ref{lem-3.4}(ii) (in which we take $\nu_n=\delta_x$) to obtain
$$
\frac{(\Phi_i)_*(\mu)}{(\Psi_i)_*(\mu)}=\lim_{j\to \infty}\frac{\log \phi_{n_j,i}(x)}{\log \psi_{n_j,i}(x)}= a_i,\quad i=1,\ldots,k.
$$
Hence $\a\in \Omega$. This proves (i).

To show (ii), assume $\htop(T)<\infty$. For $\a=(a_1,\ldots,a_k)\in \R^k$ and $\mu\in \M(X,T)$, we denote
$$
\tau_\a(\mu)=\left( (\Phi_1)_*(\mu)-a_1 (\Psi_1)_*(\mu),\;
\ldots,\;
(\Phi_k)_*(\mu)-a_k (\Psi_k)_*(\mu)\right).
$$
Clearly, $\tau_\a(\mu)\in \R^k$, and
$$\a\in \Omega \Longleftrightarrow \tau_\a(\mu_0)={\bf 0} \mbox{ for some }\mu_0\in \M(X,T).$$

Now  assume $\a\in \Omega$. Then there exists $\mu_0\in \M(X,T)$ such that $\tau_\a(\mu_0)={\bf 0}$.
  Apply Theorem \ref{thm-3.3} to obtain that
  \begin{eqnarray*}
  P_\a({\bf q})&=&P\left(T, \sum
\limits_{i=1}^k q_i(\Phi_i-a_i\Psi_i)\right)\\
&=&\sup\{h_\mu(T)+ \bq\cdot \tau_\a(\mu):\; \mu\in \M(X,T)\}
 \geq h_{\mu_0}(T)\geq 0
 \end{eqnarray*}
  for each ${\bf q}=(q_1,\ldots,q_k)\in \R^k$.

 Conversely,  assume $\a\not\in \Omega$.
write
\begin{equation}
\label{tt11}
A=\{\tau_\a(\mu):\;  \mu\in \M(X,T) \}.
\end{equation}
By Lemma \ref{lem-3.4}(i), $\tau_\a$ is a continuous affine function  on $\M(X,T)$, hence $A$ is a compact convex set in $\R^k$.  $\a\not\in \Omega$ implies ${\bf 0}\not\in A$. Hence there exists a unit vector ${\bf v}\in \R^k$ and $c>0$
such that
$$
{\bf v}\cdot {\bf b}<-c \quad \mbox{ for any }{\bf b}\in A.
$$
By Theorem \ref{thm-3.3},  we have for $t>0$,
\begin{eqnarray*}
P_\a(t{\bf v}) &=& \sup \{ h_\mu(T) +t{\bf v}\cdot \tau_\a(\mu):\; \mu\in \M(X,T)\}\\
&\leq &\sup\{h_\mu(T)-tc:\;\mu\in \M(X,T)\}=\htop(T)-tc.
\end{eqnarray*}
Letting $t\to +\infty$, we obtain   $\inf\{P_\a({\bf q}):\; {\bf q}\in \R^k\}= -\infty$. This finishes the proof of (ii).

Next we prove (iii).
Fix $\a=(a_1,\ldots, a_k)\in \Omega$. Define  $A$ as in \eqref{tt11}.
 Since $\a\in \Omega$, we have ${\bf 0}\in A$. Define $g:\ A\to \R$ by
$$
g({\bf t})=\sup\{h_\mu(T):\; \mu\in \M(X,T),\; \tau_\a(\mu)={\bf t}\}.
$$
It is direct to check that $g$ is concave and upper semi-continuous on $A$. By the definition of $H$ (see \eqref{e-tt22}), we have $H(\a)=g({\bf 0})$.
Define
$$
W({\bf q})=\sup\{g({\bf t})+{\bf q}\cdot {\bf t}:\; {\bf t}\in A\},\qquad \forall \; {\bf q}\in \R^k.
$$
Then by Corollary \ref{cor-0330}(ii), we have $g({\bf t})=\inf\{W({\bf q})-{\bf q}\cdot {\bf t}:\; {\bf q}\in \R^k\}$ for
all ${\bf t}\in A$. In particular,
\begin{equation}
\label{e-0331}
H(\a)=g({\bf 0})=\inf\{W({\bf q}):\; {\bf q}\in \R^k\}.
\end{equation}
However, by Theorem \ref{thm-3.3} and the definition of $W$, we have $$W({\bf q})=P\left(T, \sum
\limits_{i=1}^k q_i(\Phi_i-a_i\Psi_i)\right)=:P_\a({\bf q}).$$ Hence (\ref{e-0331}) implies
$H(\a)=\inf_{{\bf q}\in \R^k} P_\a({\bf q})$. This finishes the proof (iii).

In the end we prove (iv).  We divide this proof into four steps.

\noindent{\sl Step 1.}  {\em For $\a\in \Omega$, we
have $E(\a)\supseteq G_\mu\neq \emptyset$ for each
$\mu\in \M(X,T)$ with  $(\Phi_i)_*(\mu)=a_i(\Psi_i)_*(\mu)$
($i=1,2,\cdots,k$)}. To see this, let $x\in G_\mu$. By Lemma
\ref{lem-3.4}(ii) (in which we take  $\nu_n=\delta_x$), we have
$\lim_{n\to \infty} (1/n)\log \phi_{n,i}(x)=(\Phi_i)_*(\mu)$ and
$\lim_{n\to \infty} (1/n)\log \psi_{n,i}(x)=(\Psi_i)_*(\mu)$. It follows
that   $x\in E(\a)$. Hence
$E(\a)\supseteq G_\mu$.

\medskip
\noindent{\sl Step 2.} {\em Let $\a\in \R^k$ so that
$E(\a)\neq \emptyset$. Then for each $x\in
E(\a)$ and $\mu\in V(x)$ (here $V(x)$ denotes the set of limit points of $\mu_{x,n}=(1/n)\sum_{j=0}^{n-1}\delta_{T^jx}$), we have
$(\Phi_i)_*(\mu)/(\Psi_i)_*(\mu)=a_i$ for $i=1,2,\cdots,k$.} To
show this, take such  $x$ and $\mu$.  Then there exists a
subsequence sequence $n_\ell$ of natural numbers such that
$\lim_{\ell\to \infty} \mu_{n_\ell,x}=\mu$. By Lemma
\ref{lem-3.4}(ii) again (in which we take  $\nu_n=\delta_x$), we
have
$$
\lim_{\ell\to \infty}\frac{1}{n_\ell} \log \phi_{n_\ell,i}(x)
=(\Phi_i)_*(\mu) \mbox { and } \lim_{\ell\to \infty}\frac{1}{n_\ell}
 \log \psi_{n_\ell,i}(x)=(\Psi_i)_*(\mu)\qquad (i=1,2,\cdots,k).
$$
Since $x\in E(\a)$, we have
$$
\lim_{n\to \infty}\frac{ \log \phi_{n,i}(x)}{\log \psi_{n,i}(x)} =a_i
\quad (i=1,2,\cdots,k).
$$
It follows that $(\Phi_i)_*(\mu)/(\Psi_i)_*(\mu)=a_i$ for
$i=1,2,\cdots,k$.

\medskip
\noindent{\sl Step 3.} {\em For $\a\in \Omega$, we
have $\htop (T, E(\a))\geq H(\a)$}.  To
see it, let $\mu\in \M(X,T)$ so that
$(\Phi_i)_*(\mu)=a_i(\Psi_i)*(\mu)$ ($i=1,2,\cdots,k$). By step 1,
$E(\a)\supseteq G_\mu$ and hence $\htop (T,
E(\a))\geq \htop(T, G_\mu)=h_\mu(T)$. This proves the
inequality $\htop (T, E(\a))\geq
H(\a)$.

\medskip
\noindent{\sl Step 4.} {\em For $\a\in \Omega$, we
have $\htop (T, E(\a))\leq H(\a)$}. By
step 2, for each $x\in E(\a)$ and $\mu\in V(x)$, we
have $(\Phi_i)_*(\mu)/(\Psi_i)_*(\mu)=a_i$ for $i=1,2,\cdots,k$ and
hence $h_\mu(T)\leq H(\a)$. It follows that $$
E(\a)\subseteq \{x\in X:\; \exists \; \mu\in V(x)
\mbox{ with  }h_\mu(T)\leq H(\a)\}.
$$
By Lemma \ref{Bow}, we have $\htop(T, E(\a))\leq
H(\a)$. This finishes the proof of (iv).
\end{proof}

We remark that (iii) of Theorem \ref{thm-6.1} can be proved alternatively  by applying Proposition 3.15 in \cite{FeSh08}.
\medskip
\begin{proof}[Proof of Theorem \ref{thm-1.4}]
Except the second part in (iii), all the statements listed in
Theorem \ref{thm-1.4} follow from Theorem \ref{thm-6.1} (in which we
take $k=1$ and $\psi_n(x)\equiv 1$). In the following,  we prove the
second part in Theorem \ref{thm-1.4}(iii): under the assumptions
that $\htop(T)<\infty$ and the entropy map is upper semi-continuous,
for any  $\alpha\in \Gamma:=\bigcup_{t\in
\R}\{P^\prime_\Phi(t-),P^\prime_\Phi(t+)\}\cup
\{P_\Phi^\prime(\pm\infty)\}$,
 we have $E_\Phi(\alpha)\neq \emptyset$ and $$
\htop(T,E_\Phi(\alpha))=\inf\{P_\Phi(q)-\alpha q:\; q\in \R\}.
$$
According to Corollary \ref{cor-3.8}, it suffices to show that if
$\alpha\in \Gamma$, then $E_\Phi(\alpha)\neq \emptyset$ and $
\htop(T,E_\Phi(\alpha))\geq \inf\{P_\Phi(q)-\alpha q:\; q\in \R\}. $
For this purpose,  we will show the following claim:

\noindent {\bf Claim.} {\it For each $\alpha\in \Gamma$, there exists
an ergodic measure $\nu$ such that $\Phi_*(\nu)=\alpha$ and
$h_\nu(T)\geq \inf\{P_\Phi(q)-\alpha q:\; q\in \R\}$}.

The claim will imply   that $\nu(E_\Phi(\alpha))=1$ (by  Kingman's
sub-additive ergodic theorem), and
 by Proposition \ref{pro-2.1}(3),   $\htop(T, E_\Phi(\alpha))\geq h_\nu(T)\geq \inf\{P_\Phi(q)-\alpha q:\; q\in \R\}$. In the following we prove the claim
 in a way similar to the proof
of Theorem \ref{thm-1.3-h}.

First we consider the case  $\alpha\in
\{P^\prime_\Phi(t-),P^\prime_\Phi(t+)\}$ for some $t\in \R$.  Fix
$t$ and denote
$$\mathcal{I}({ \Phi},{ t})=\{\mu\in \M(X,T):\; P_\Phi(t)=h_\mu(T)+t\Phi_*(\mu)\}.$$
 By Theorem
\ref{lem-mfsa1-1}(ii), $\mathcal{I}({ \Phi},{ t})$ is a non-empty
compact convex subset of $\mathcal{M}(X,T)$. Furthermore, for any
$b\in \R$ the set
$$\mathcal{I}_{b}({ \Phi},{ t}):=
\{\nu\in \mathcal{I}({ \Phi},{ t}):\ { \Phi}_*(\nu)=b\}$$ is compact
and convex (may be empty). The convexity of $\mathcal{I}_{b}({
\Phi},{ t})$ is clear. To show the compactness, assume that
$\{\nu_n\}\subset \mathcal{I}_{b}({ \Phi},{ t})$ and $\nu_n$
converges to $\nu$ in $\mathcal{M}(X,T)$. Then by the upper
semi-continuity of $h_{(\cdot)}(T)$ and the continuity of
$\Phi_*(\cdot)$, we have
$$h_\nu(T)+{ t}{ \Phi}_*(\nu)\geq \lim_{n\to
\infty}h_{\nu_n}(T)+{t} { \Phi}_*(\nu_n)=P_{{ \Phi}}({ t}).$$ By
Theorem \ref{thm-3.3}, $\nu\in \mathcal{I}({ \Phi},{ t})$ and
furthermore, $h_\nu(T)=h_{\nu_n}(T)=P_{{ \Phi}}({t})-{ t} b$ and
$ { \Phi}_*(\nu)={ \Phi}_*(\nu_n)=b$. This is,
 $\nu\in \mathcal{I}_{b}({ \Phi},{ t})$.
Hence $\mathcal{I}_{b}({ \Phi},{ t})$ is compact.

Since $\alpha\in \{ P'_{\Phi}(t-),P'_{\Phi}(t+)\}\subset \partial
P_\Phi(t)$, by  Theorem \ref{lem-mfsa1-1}, $\mathcal{I}_{\alpha}({
\Phi},{ t})$ is non-empty. We are going to show further that
$\mathcal{I}_{\alpha}({\Phi},{t})$ contains at least one  ergodic
measure. Since $\mathcal{I}_{\alpha}({\Phi},{ t})$
 is a non-empty
compact convex subset of $\mathcal{M}(X,T)$, by the Krein-Milman
theorem, it contains at least one extreme point, denoted by $\nu$.
Let $\nu=p\nu_1+(1-p)\nu_2$ for some $0<p<1$ and $\nu_1,\nu_2\in
\mathcal{M}(X,T)$. We will show that $\nu_1=\nu_2$, which implies
that $\nu$ is ergodic. To see that $\nu_1=\nu_2$, note that
\begin{align*}
P_{{ \Phi}}({ t})&=h_\nu(T)+{ t} { \Phi}_*(\nu) \\
&=p(h_{\nu_1}(T)+{ t} { \Phi}_*(\nu_1))+(1-p)(h_{\nu_2}(T)+{ t} {
\Phi}_*(\nu_2)).
\end{align*}
By Theorem \ref{thm-3.3}, $\nu_1,\nu_2\in \mathcal{I}({ \Phi},{
t})$. By Theorem \ref{lem-mfsa1-1}, ${ \Phi}_*(\nu_1),{
\Phi}_*(\nu_2)\in
\partial P_{{ \Phi}}({ t})$. Moreover,
note that $\alpha={ \Phi}_*(\nu)=p{ \Phi}_*(\nu_1) +(1-p){
\Phi}_*(\nu_2)$, we have ${ \Phi}_*(\nu_1)={ \Phi}_*(\nu_2)=\alpha$
since $\alpha$ is an extreme point of $\partial P_{{ \Phi}}({ t})$
(noting  that $\partial P_{{ \Phi}}({
t})=[P'_{\Phi}(t-),P'_{\Phi}(t+)]$). That is, $\nu_1,\nu_2\in
\mathcal{I}_{\alpha}({ \Phi},{ t})$. Since $\nu$ is an extreme point
of $\mathcal{I}_{\alpha}({ \Phi},{ t})$, we have $\nu_1=\nu_2=\nu$.
Therefore,  $\nu$ is ergodic. Since $\nu\in
\mathcal{I}_{\alpha}({\Phi},{ t})$, we have $\Phi_*(\nu)=\alpha$,
and $h_{\nu}(T)=P_\Phi(t)-\alpha t\geq \inf \{P_\Phi(q)-\alpha q:\;
q\in \R\}$. This proves the claim in the case that $\alpha\in \{
P'_{\Phi}(t-),P'_{\Phi}(t+)\}\subset \partial P_\Phi(t)$ for some
$t\in \R$.

Next, we consider the case  $\alpha\in \{P'_\Phi(\pm \infty)\}$.
First assume that $\alpha=P'_\Phi(+\infty)$. By Theorems
\ref{thm-1.1} and \ref{thm-1.2},
$\alpha=\overline{\beta}(\Phi)=\max\{\Phi_*(\mu):\; \mu\in
\M(X,T)\}$. By the convexity of $P_\Phi(\cdot)$, there exists a
sequence $(t_j)\uparrow +\infty$, such that $P'_\Phi(t_j):=\alpha_j$
exists and $\alpha_j\uparrow\alpha$ when $j\to \infty$.  As  proved
in last paragraph,
 for each $j\in \mathbb{N}$, there exists $\mu_j\in
\mathcal{M}(X,T)$ such that ${ \Phi}_*(\mu_j)=\alpha_j$  and
$h_{\mu_j}(T)= P_{{ \Phi}}({ t}_j)-\alpha_j { t}_j$. Extract a
subsequence if necessary so that  $\lim_{j\rightarrow
\infty}\mu_j=\mu$ for some $\mu\in \mathcal{M}(X,T)$. Then
${\Phi}_*(\mu)=\lim_{j\to \infty}{\Phi}_*(\mu_j)=\alpha$ and
\begin{align*}
h_\mu(T)&\ge \limsup_{j\rightarrow \infty} h_{\mu_j}(T)=
\limsup_{j\rightarrow \infty} (P_{{\Phi}}({
t}_j)-\alpha_j {  t}_j)\\
&\ge \limsup_{j\rightarrow \infty} (P_{{ \Phi}}({ t}_j)-\alpha {
t}_j)\geq \inf_{{ q}\in \mathbb{R}}\{ P_{{ \Phi}}({ q})-\alpha
{q}\},
\end{align*}
where we use the facts $\alpha_j\leq \alpha$ and $t_j>0$ for the
second inequality. Now let $\mu=\int \theta \; dm(\theta)$ be the
ergodic decomposition of $\mu\in \M(X,T)$.   By Proposition
\ref{pro-1195}(3), $\int \Phi_*(\theta)\;
dm(\theta)=\Phi_*(\mu)=\alpha$.  By the way, we also have  $\int
h_\theta(T)\; dm(\theta)=h_{\mu}(T)$ (cf. \cite{Wal-book}).  Note
that  $\Phi_*(\theta)\leq \alpha$ for any $\theta$. Hence there
exists an ergodic measure $\nu$ such that $\Phi_*(\nu)=\alpha$ and
$h_\nu(T)\geq h_\mu(T)\geq \inf_{{ q}\in \mathbb{R}}\{ P_{{ \Phi}}({
q})-\alpha {q}\}$, as desired. In the end, assume
$\alpha=P'_\Phi(-\infty)$. Then $-\alpha=P'_{-\Phi}(+\infty)$. Since
$-\Phi$ is also asymptotically additive, there exists an ergodic
measure $\eta$ such that $(-\Phi)_*(\eta)=-\alpha$, i.e.,
$\Phi_*(\eta)=\alpha$,  and  $$ h_\eta(T)\geq \inf_{ q\in \R}\{
P_{{- \Phi}}({ q})-(-\alpha) {q}\}=\inf_{q\in \R}\{ P_{{ \Phi}}({
-q})+\alpha {q}\}=\inf_{q\in \R}\{ P_{{ \Phi}}({ q})-\alpha {q}\}.$$
This finishes the proof of the claim, and also the proof of Theorem
\ref{thm-1.4}.
\end{proof}

\section{Examples}
\label{S-6}
\setcounter{equation}{0}
In this section, we give some  examples regarding  Lyapunov spectra on TDS's on which the entropy map is not
upper semi-continuous. The multifractal behaviors in this case are rather irregular and complicated.
These examples also show that the conditions and results in Theorems \ref{thm-1.1}, \ref{thm-1.2} and \ref{thm-1.2-h} are optimal.

\begin{ex}\label{re-1.1}
There exist a TDS $(X,T)$ with $\htop(T)<\infty$ and an additive
potential $\Phi=\{\log \phi_n\}_{n=1}^\infty$ on $X$ such that $\left(\lim_{t\to
0+}P^\prime_\Phi(t-),P^\prime_\Phi(\infty)\right)=\emptyset$ and for
$\alpha=P'_{\Phi}(\infty)$,
$$\htop(T,E_{\Phi}(\alpha))=\sup\{h_\mu(T): \mu\in
\mathcal{M}(X,T),\Phi_*(\mu)=\alpha\}<\inf_{q>0}\{P_{\Phi}(q)-\alpha
q\}.$$
\end{ex}
\begin{proof}[Construction]
 According to Krieger \cite{Kre72}, for each $i\in \N$, we can  construct a Cantor set  $X_i\subseteq [0,\frac{1}{i}]\times
\{\frac{1}{i}\}$ and a continuous transformation  $T_i:\; X_i\rightarrow X_i$ such that   $(X_i,T_i)$ is
uniquely ergodic (i.e., $\M(X_i, T_i)$ consists of a singleton) and
$h_{\text{top}}(T_i)=1$. Then we let  $X=\bigcup_{i=1}^\infty X_i\cup
\{(0,0)\}$ and define  $T:\; X\rightarrow X$ by
$$T(x)=\begin{cases} T_i(x) \, &\text{ if }x\in X_i,\\
x \, &\text{ if }x=(0,0).\end{cases}$$
It is easy to check that $(X,T)$ is a TDS. Define a function $g:\; X\to \R$ by
$$g(x)=\begin{cases} 1-1/i \, &\text{ if }x\in X_i,\\
1 \, &\text{ if }x=(0,0).\end{cases}$$ Let
$\phi_n(x)=\exp \left( \sum \limits_{j=0}^{n-1}g(T^jx) \right)$ for $n\in \mathbb{N}$ and $x\in
X$. Then $\Phi=\{\log \phi_n\}_{n=1}^\infty$ is an additive potential on $X$.

For $i\in \N$, let $\mu_i$ denote the unique element in $\M(X_i,T_i)$. Let $\mu_0$ be
the Dirac measure $\delta_{(0,0)}$ at the point $(0,0)$. Then
$\mathcal{E}(X,T)=\{ \mu_i:\, i=0,1,\cdots\}$ and thus
$$\mathcal{M}(X,T)=\left\{\sum_{i=0}^\infty \lambda_i\mu_i:\, \lambda_i\ge 0 \text{ and }
\sum_{i=0}^\infty \lambda_i=1\right\}.$$
By Theorem \ref{thm-3.3},  we have
\begin{align*}
P_{\Phi}(q)&=\sup\{ h_\mu(T)+q\Phi_*(\mu):\mu\in \mathcal{M}(X,T)\}\\
&=\sup\left\{ h_{\sum_{i=0}^\infty \lambda_i \mu_i}(T)+q\sum_{i=0}^\infty \lambda_i \Phi_*(\mu_i):\lambda_i\ge 0
\text{ and }
\sum_{i=0}^\infty \lambda_i=1 \right\}\\
&=\sup\left\{ \sum_{i=0}^\infty \lambda_i \left(  h_{\mu_i}(T)+q\Phi_*(\mu_i)\right) :\lambda_i\ge 0
 \text{ and }
\sum_{i=0}^\infty \lambda_i=1 \right \}\\
&=\sup\left\{ q\lambda_0+\sum_{i=1}^\infty \lambda_i \left(1+q(1-1/i)) \right) :\lambda_i\ge 0
 \text{ and }
\sum_{i=0}^\infty \lambda_i=1  \right\}\\
&=\max\left\{ q, \; \sup_{i\in \mathbb{N}} \{1+q(1-{1}/{i})
\}\right\}=q+1 \text{ for }q> 0.
\end{align*}
Hence $P'_{\Phi}(q)=1\, \text{ for }q>0 \text{ and }
P'_{\Phi}(\infty)=1$. Thus $\left(\lim_{t\to
0+}P^\prime_\Phi(t-),P^\prime_\Phi(\infty)\right)=\emptyset$.  For
$\alpha=P'_{\Phi}(\infty)=1$, one has $E_{\Phi}(\alpha)=\{(0,0)\}$. Hence
$$0=h_{\text{top}}(T,E_{\Phi}(\alpha))=\sup\{ h_\mu(T): \mu\in
\M(X,T), \Phi_*(\mu)=\alpha\}<\inf_{q>0}\{P_{\Phi}(q)-\alpha q\}
=1,$$ as desired.
\end{proof}

\begin{ex}\label{re-1.2}
There exist a TDS $(X,T)$ with $\htop(T)<\infty$, an additive
potential $\Phi=\{\log \phi_n\}_{n=1}^\infty$ on $X$ such that for each $\alpha\in
[\underline{\beta}(\Phi),\overline{\beta}(\Phi)]$,
$$\htop(T,E_{\Phi}(\alpha))< \inf_{q\in \mathbb{R}}\{P_{\Phi}(q)-\alpha
q\},$$
where $\underline{\beta}(\Phi):=\lim_{n\to \infty}\frac{1}{n} \inf_{x\in X} \log \phi_n(x)$.
\end{ex}

\begin{proof}
[Construction]
Similar to the construction  in Example \ref{re-1.1}, we construct
Cantor sets  $X_i\subseteq [0,\frac{1}{|i|+1}]\times
\{\frac{i}{|i|+1}\}$ ($i\in \mathbb{Z}$) and  continuous
transformations $T_i:X_i\rightarrow X_i$ such that $(X_i,T_i)$ is
uniquely ergodic and $h_{\text{top}}(T_i)=\frac{|i|}{|i|+1}$.
 Then let
$X=\bigcup_{i\in \mathbb{Z}} X_i\cup \{(0,1)\}\cup\{(0,-1)\}$ and
define $T:X\rightarrow X$ by
$$T(x)=\begin{cases} T_i(x) \, &\text{ if }x\in X_i,\\
x \, &\text{ if }x=(0,1)\text{ or }(0,-1).\end{cases}$$
It is clear that  $(X,T)$ is a TDS. Define a continuous function $h$
on $X$ by
$$g(x)=\begin{cases} \frac{i}{|i|+1} \, &\text{ if }x\in X_{i},\\
1 \, &\text{ if }x=(0,1),\\
-1 \, &\text{ if }x=(0,-1).
\end{cases}$$
Let $\phi_n(x)=\exp \left(\sum \limits_{j=0}^{n-1}g(T^jx)\right)$
for $n\in \mathbb{N}$ and $x\in X$. Then $\Phi=\{\log \phi_n\}$ is
an additive potential on $X$ with
$[\underline{\beta}(\Phi),\overline{\beta}(\Phi)]=[-1,1]$.
Similarly, it is not hard to verify that
$$P_{\Phi}(q)=\max\left\{ q,-q,\;\sup_{i\in {\Bbb Z}}\left\{\frac{|i|}{|i|+1}+\frac{i}{|i|+1}q\right\}\right\}
=1+|q|.$$
Hence $P'_{\Phi}(\infty)=P_{\Phi}'(0+)=1,\,
P_{\Phi}'(0-)=P_{\Phi}'(-\infty)=-1 $ and
$$P'_{\Phi}(q)=\begin{cases} 1 \, &\text{ if }q> 0,\\
-1 \, &\text{ if }q<0.
\end{cases}
$$
It is easy to see that $E_\Phi(\alpha)\neq \emptyset$
if and only if $\alpha\in \{\frac{i}{|i|+1}:\; i\in {\Bbb Z}\}\cup\{1,-1\}$.
Furthermore
$$
E_\Phi(\alpha)=\left\{\begin{array}{ll}
X_i &\mbox{ if } \alpha=\frac{i}{|i|+1} \mbox{ for some }i\in {\Bbb Z},\\
\{(0,1)\} &\mbox{ if }\alpha=1,\\
\{(0,-1)\} &\mbox{ if }\alpha=-1.
\end{array}
\right.
$$
 Hence for $\alpha\in
[\underline{\beta}(\Phi),\overline{\beta}(\Phi)]=[-1,1]$,
$$h_{\text{top}}(T,E_{\Phi}(\alpha))<1=\inf_{q\in \mathbb{R}}\{P_{\Phi}(q)-\alpha
q\},$$
as desired. Keep in mind that  $\{\Phi_*(\mu):\; \mu\in \M(X,T)\}=[-1,1]$ by Lemma \ref{lem-mf-sa}.
\end{proof}
\begin{ex}\label{re-1.3}
There exist a TDS $(X,T)$ with $\htop(T)<\infty$ and two additive
potential $\Phi_i=\{\log \phi_{n,i}\}_{n=1}^\infty$ ($i=1,2)$ on $X$
 such that $\partial P_{\bf
\Phi}(\mathbb{R}^2_+)$ is one-dimensional set and for any ${\bf
a}\in
\partial P_{\bf \Phi}(\mathbb{R}^2_+)$, where ${\bf
\Phi}=(\Phi_1,\Phi_2)$,
$$\sup\{h_\mu(T): \mu\in
\mathcal{M}(X,T),{\bf \Phi}_*(\mu)={\bf a}\}< \inf_{{\bf q}\in
\mathbb{R}_+^2}\{P_{\bf \Phi}({\bf q})-{\bf a}\cdot {\bf q}\}.$$
\end{ex}
\begin{proof}[Construction]
Similar to the previous two examples, we construct a Cantor
set $X_i\subseteq [0,\frac{1}{|i|+1}]\times \{\frac{i}{|i|+1}\}$ and
a continuous transformation  $T_i:\; X_i\rightarrow X_i$ such that
 $(X_i,T_i)$ is uniquely ergodic  and $h_{\text{top}}(T_i)=1$.  Then let
$X=\bigcup_{i\in \mathbb{Z}} X_i\cup \{(0,1)\}\cup\{(0,-1)\}$ and
define $T:X\rightarrow X$ by
$$T(x)=\begin{cases} T_i(x) \, &\text{ if }x\in X_i,\\
x \, &\text{ if }x=(0,1)\text{ or }(0,-1).\end{cases}$$ It is clear
that  $(X,T)$ is a TDS. Define two continuous function $g_1,g_2$ on
$X$ by
$$g_1(x)=\begin{cases} \frac{i}{i+1} \, &\text{ if }x\in X_{i},\; i\ge 0,\\
1 \, &\text{ if }x=(0,1),\\
\frac{2|i|}{|i|+1} \, &\text{ if }x\in X_i,\;i<0,\\
2 \, &\text{ if }x=(0,-1).
\end{cases}
\ \ \ \text{ and } \ \ \
g_2(x)=\begin{cases} 0 \, &\text{ if }x\in X_{i},\; i\ge 0,\\
0 \, &\text{ if }x=(0,1),\\
\frac{-|i|}{|i|+1} \, &\text{ if }x\in X_i,\;i<0,\\
-1 \, &\text{ if }x=(0,-1).
\end{cases}
$$
Set $\phi_{n,i}(x)=\exp \left(\sum
\limits_{j=0}^{n-1}g_i(T^jx)\right)$ for $i=1,2$, $n\in \mathbb{N}$
and $x\in X$. Then $\Phi_i=\{\log \phi_{n,i}\}_{n=1}^\infty$, $i=1,2$, are two
additive potentials on $X$ with $\overline{\beta}(\Phi_1)=2$,
$\overline{\beta}(\Phi_2)=0$.

For $i\in \mathbb{Z}$, let $\mu_i$ denote the unique element in
$\M(X_i,T_i)$. Let $\mu_{\infty}$ be the Dirac measure
$\delta_{(0,1)}$ at the point $(0,1)$.  Let $\mu_{-\infty}$ be the
Dirac measure $\delta_{(0,-1)}$ at the point $(0,-1)$. For simplify,
write $\overline{\mathbb{Z}}=\mathbb{Z}\cup \{\pm \infty\}$. Then $\mathcal{E}(X,T)=\{ \mu_i:\, i\in
\overline{\mathbb{Z}}\}$.
A direct calculation by applying  Theorem
\ref{thm-3.3} yields that  for ${\bf q}=(q_1,q_2)\in \mathbb{R}_+^2$,
\begin{align*}
P_{\bf \Phi}({\bf q})=\max\{ 1+q_1,1+2q_1-q_2\}.
\end{align*}
Hence $$P'_{\bf \Phi}({\bf q})=\begin{cases} (1,0) \, &\text{ if }{\bf
q}\in \mathbb{R}_+^2 \text{ with } q_1<q_2\\
(2,-1) \, &\text{ if }{\bf q}\in \mathbb{R}_+^2 \text{ with }
q_1>q_2 \end{cases}   $$
and $\partial P_{\bf
\Phi}((q,q))=\text{conv}(\{(1,0),(2,-1)\})$ for $q>0$. Thus
$\partial P_{\bf
\Phi}(\mathbb{R}^2_+)=\text{conv}(\{(1,0),(2,-1)\})$ is one
dimensional.

Recall that $A:=\{ {\bf \Phi}_*(\mu):\mu\in \mathcal{M}(X,T)\}$.
Clearly, $A=\text{conv}(\{ (0,0), (1,0), (2,-1)\})$ is
a two-dimensional set and $\partial P_{\bf \Phi}(\mathbb{R}^2_+)$ is just one edge in the convex set $A$.

For ${\bf a}\in \partial P_{\bf \Phi}(\mathbb{R}^2_+)$, there exists
unique $t\in [0,1]$ with ${\bf a}=t(1,0)+(1-t)(2,-1)$. It is not
hard to see that for $\mu\in \mathcal{M}(X,T)$, ${\bf
\Phi}_*(\mu)={\bf a}$ if and only if
$\mu=t\mu_{\infty}+(1-t)\mu_{-\infty}$.  Hence
\begin{align*}
&\sup\{ h_\mu(T): \mu\in
\M(X,T),{\bf \Phi}_*(\mu)={\bf a}\}=h_{t\mu_\infty+(1-t)\mu_{-\infty}}(T)=0\\
&\hskip0.5cm <1=\inf_{{\bf q}\in \mathbb{R}^2_+}\{\max\{1+q_1,1+2q_1-q_2\}-\left( tq_1+(1-t)(2q_1-q_2) \right)\}\\
&\hskip0.5cm=\inf_{{\bf q}\in \mathbb{R}^2_+}\{P_{\bf \Phi}({\bf q})-{\bf a}\cdot {\bf q}\},
\end{align*}
as desired.
\end{proof}

\appendix
\section{Properties and examples of Asymptotical sub-additive potentials}
\label{A}
\setcounter{equation}{0}

In this appendix, we give some properties and examples of
asymptotically sub-additive (resp. asymptotically additive)
potentials. Let $(X,T)$ be a TDS and let $\Phi=\{\log
\phi_n\}_{n=1}^\infty$ be an asymptotically sub-additive potential
on a TDS $(X,T)$. Let $\lambda_\Phi$ and $\Phi_*$ be defined as in
(\ref{e-1.1})-(\ref{e-1.2}).

\begin{pro}
\label{pro-1195}
 Let $\mu\in \mathcal{M}(X,T)$. Then we have the following properties.
\begin{enumerate}
\item   The limit $\Phi_*(\mu)=\lim\limits_{n\rightarrow \infty} \frac{1}{n} \int \log \phi_n(x)\ d
\mu(x)$  exists (which may take value $-\infty$). Furthermore
$\lambda_{\Phi}(x)$ exists  for $\mu$-a.e.~$x\in X$, and $\int
\lambda_{\Phi}(x) ~d \mu(x)=\Phi_*(\mu)$. In particular,  when
$\mu\in \E(X,T)$, $\lambda_{\Phi}(x)=\Phi_*(\mu)$ for $\mu$-a.e.
$x\in X$.

\item The map $\Phi_*:\mathcal{M}(X,T)\rightarrow \mathbb{R}\cup
\{-\infty\}$ is upper semi-continuous and there is $C\in \mathbb{R}$
such that  for all $\mu\in \mathcal{M}(X,T)$, $\lambda_\Phi(x)\le C$
$\mu$-a.e and $\Phi_*(\mu)\le C$.

\item Let $\mu=\int_{\Omega} \theta \ d m(\theta)$ be the ergodic
decomposition of $\mu\in \mathcal{M}(X,T)$. Then
$\Phi_*(\mu)=\int_{\Omega} \Phi_*(\theta) ~d m(\theta)$.

\end{enumerate}
\end{pro}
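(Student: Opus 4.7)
The plan is to reduce everything to the classical sub-additive setting via the defining approximation and Kingman's sub-additive ergodic theorem. For each $k \in \N$, fix a sub-additive potential $\Psi_k = \{\log \psi_{n,k}\}$ with $\eta_k := \limsup_n (1/n) \sup_x |\log \phi_n(x) - \log \psi_{n,k}(x)| \leq 1/k$. Sub-multiplicativity of $\sup_x \psi_{n,k}(x)$ combined with Fekete's lemma gives $\overline{\beta}(\Psi_k) < +\infty$, hence $C := \overline{\beta}(\Phi) < +\infty$. Kingman's sub-additive ergodic theorem applied to $\Psi_k$ (whose upper integrability is automatic since $\log \psi_{n,k}$ is bounded above on $X$) yields a $\mu$-a.e.\ limit $f_k(x) := \lim_n (1/n) \log \psi_{n,k}(x) \in [-\infty, +\infty)$, while Fekete applied to the sub-additive real sequence $\int \log \psi_{n,k} \, d\mu$ gives existence of $(\Psi_k)_*(\mu) := \lim_n (1/n) \int \log \psi_{n,k} \, d\mu = \int f_k \, d\mu$; for ergodic $\mu$, $f_k$ is constant a.e.

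For (1), the uniform approximation forces $|\overline{\lambda}_\Phi(x) - f_k(x)| \leq \eta_k$ and $|\underline{\lambda}_\Phi(x) - f_k(x)| \leq \eta_k$ $\mu$-a.e., so the oscillation satisfies $\overline{\lambda}_\Phi - \underline{\lambda}_\Phi \leq 2\eta_k$, and letting $k \to \infty$ gives a.e.\ existence of $\lambda_\Phi$. The same estimate at the level of integrals shows $(1/n) \int \log \phi_n \, d\mu$ has liminf and limsup within $\eta_k$ of $(\Psi_k)_*(\mu)$, hence converges to a limit which we define as $\Phi_*(\mu)$, with $|\Phi_*(\mu) - (\Psi_k)_*(\mu)| \leq \eta_k$. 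Then $\int \lambda_\Phi \, d\mu = \Phi_*(\mu)$ follows from $|\int \lambda_\Phi \, d\mu - \int f_k \, d\mu| \leq \eta_k$ (all quantities lying in $[-\infty, C]$), and specialization to ergodic $\mu$ gives the last assertion. For (2), the pointwise bound $\overline{\lambda}_\Phi(x) \leq \overline{\beta}(\Phi) = C$ gives $\lambda_\Phi \leq C$ a.e.\ and $\Phi_*(\mu) \leq C$; upper semi-continuity follows because $\log \psi_{n,k}$ is upper semi-continuous and bounded above, so each $\nu \mapsto \int \log \psi_{n,k} \, d\nu$ is upper semi-continuous, hence $(\Psi_k)_* = \inf_n (1/n) \int \log \psi_{n,k} \, d\nu$ is upper semi-continuous as an infimum; the uniform bound $\Phi_* \leq (\Psi_k)_* + \eta_k$ then yields $\limsup_{\nu \to \mu} \Phi_*(\nu) \leq (\Psi_k)_*(\mu) + \eta_k \leq \Phi_*(\mu) + 2\eta_k$, and letting $k \to \infty$ concludes.

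For (3), by part (1) $\lambda_\Phi$ is well-defined $\mu$-a.e., and via $\mu = \int \theta \, dm(\theta)$ also $\theta$-a.e.\ for $m$-a.e.\ $\theta$. The Fubini--Tonelli identity for the ergodic decomposition (applied to the non-negative function $C - \lambda_\Phi$, which is legitimate since $\lambda_\Phi \leq C$) gives $\int \lambda_\Phi \, d\mu = \int \bigl(\int \lambda_\Phi \, d\theta\bigr) \, dm(\theta)$; applying (1) to each ergodic $\theta$ rewrites the inner integral as $\Phi_*(\theta)$, while applying (1) to $\mu$ rewrites the left side as $\Phi_*(\mu)$. The main obstacle throughout is managing the value $-\infty$: the uniform upper bound $C$ together with the paper's convention $\log 0 - \log 0 = 0$ makes all approximation inequalities meaningful (reducing trivially to $0 = 0$ whenever both sides would otherwise be $-\infty$), so the passages $k \to \infty$ in (1) and (2) go through without incident.
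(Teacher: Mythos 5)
Your proof of parts (1) and (2) follows essentially the same route as the paper: reduce to the sub-additive case via the defining $\epsilon$-approximation, invoke Kingman, and squeeze. The only cosmetic difference in (2) is that you invoke the standard fact that $\nu \mapsto \int f\,d\nu$ is upper semi-continuous when $f$ is USC and bounded above, while the paper unfolds that fact explicitly via the truncation $\max\{\log\psi_n, R\}$ and monotone convergence in $R$ — these are the same argument, and your bound $C = \overline\beta(\Phi)$ is a cleaner choice of upper constant than the paper's $\epsilon + (\log D)/n_0$.

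Part (3) is where you take a genuinely different route. The paper first handles the sub-additive case by introducing the dyadic quantities $h_k(\theta) = 2^{-k}\int\log\phi_{2^k}\,d\theta$, exploiting the monotonicity $h_1 \geq h_2 \geq\cdots\searrow \Phi_*(\theta)$ (which is where the subsequence $2^k$ is essential — the full sequence $(1/n)\int\log\phi_n\,d\theta$ need not be monotone) so that the monotone convergence theorem can be applied to pass the $k$-limit through $\int_\Omega\,dm(\theta)$; the ergodic-decomposition Fubini is invoked only at the level of a fixed $n$-step integrand $\log\phi_{2^k}$. Then the asymptotically sub-additive case is obtained by approximation. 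You instead apply the ergodic-decomposition Fubini directly to the Kingman-limit function $\lambda_\Phi$ (more precisely, to the nonnegative Borel function $C - \overline\lambda_\Phi$, after noting that $\overline\lambda_\Phi = \lambda_\Phi$ holds $\theta$-a.e.\ for $m$-a.e.\ $\theta$), and then use the identity $\Phi_*(\nu) = \int\lambda_\Phi\,d\nu$ from part (1) for $\nu = \mu$ and for $\nu = \theta$. This avoids the dyadic/MCT machinery entirely and also dispenses with the separate sub-additive/asymptotic reduction, at the cost of needing to observe that $\lambda_\Phi$ admits a Borel representative to which the disintegration Fubini applies. Both are correct; yours is a bit shorter because it leverages the conclusion of part (1) rather than redoing the limit interchange from scratch.
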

\begin{proof} In the case that $\Phi$ is  sub-additive, statement (1) comes exactly  from Kingman's sub-additive ergodic theorem (cf. \cite{Wal-book}, p. 231).  We shall show that it remains valid when $\Phi$ is asymptotically sub-additive. Fix such a $\Phi$. For $\epsilon>0$, by definition, there exist a sub-additive
potential $\Psi=\{\log \psi_n\}_{n=1}^\infty$ and an integer $n_0$
such that $|\log \phi_n(x)-\log \psi_n(x)|\leq n\epsilon$ for any
$n\geq n_0$ and $x\in X$. Hence
\begin{eqnarray*}
\limsup_{n\rightarrow \infty} \frac{1}{n}\int \log \phi_n(x) ~d
\mu(x)&\le & \lim_{n\rightarrow \infty}\frac{1}{n}\int \log
\psi_n(x) ~d
\mu(x)+\epsilon\\
&\le & \liminf_{n\rightarrow \infty}\frac{1}{n}\int \log \phi_n(x)
~d \mu(x)+2\epsilon.
\end{eqnarray*}
Since the above inequalities hold for any $\epsilon>0$, the limit
for defining $\Phi_*(\mu)$ exists. Similarly, we have the
inequalities
$$\limsup_{n\rightarrow \infty} \frac{1}{n} \log \phi_n(x) \le \lim_{n\rightarrow \infty}\frac{1}{n} \log \psi_n(x) +\epsilon \le \liminf_{n\rightarrow
\infty}\frac{1}{n} \log \phi_n(x) +2\epsilon
$$
 for $\mu$-a.e. $x$, from which we derive that $\lambda_\Phi(x)$ exists $\mu$-a.e and  $\int \lambda_\Phi(x)\;d\mu(x)=\Phi_*(\mu)$. Furthermore, $\lambda_\Phi(x)=\Phi_*(\mu)$ $\mu$-a.e.\! when $\mu$ is ergodic. This proves (1).

To see that $\Phi_*$ is upper semi-continuous, let $\epsilon>0$ and
$\Psi$ be given  as in the above paragraph.  Suppose that
$\{\mu_i\}$ is a sequence in $\mathcal{M}(X,T)$ which converges to
$\mu$ in the weak$^*$ topology.  Then for any $n\geq n_0$ and $R\in
\R$,
\begin{align*}
\limsup_{i\rightarrow \infty} \Phi_*(\mu_i)& \le
\limsup_{i\rightarrow \infty} \Psi_*(\mu_i)+\epsilon \le
\limsup_{i\rightarrow \infty} \frac{1}{n}\int \log \psi_n(x) ~d
\mu_i(x)+\epsilon\\
&\le\limsup_{i\rightarrow \infty}\frac{1}{n}\int \max\left\{\log
\psi_n(x), R\right\} ~d
\mu_i(x)+\epsilon\\
&=\frac{1}{n}\int \max\left\{\log \psi_n(x), R\right\} ~d
\mu(x)+\epsilon.\\
\end{align*}
Taking $R\to -\infty$ to obtain
\begin{align*}
\limsup_{i\rightarrow \infty} \Phi_*(\mu_i) &\leq \frac{1}{n}\int
\log  \psi_n(x) ~d \mu(x)+\epsilon\leq  \frac{1}{n}\int \log
\phi_n(x) ~d \mu(x)+2\epsilon.
\end{align*}
Letting $n\to \infty$, we have $\limsup_{i\to
\infty}\Phi_*(\mu_i)\leq \Phi_*(\mu)$. This proves the upper
semi-continuity of $\Phi_*$. To give an upper bound for
$\lambda_\Phi$ and $\Phi_*$, let $D=\max_{x\in X}\psi_{n_0}(x)$.
Then $\log \psi_{kn_0}(x)\leq k\log D$ by the subadditivity. Hence
for $\mu$-a.e $x$,
$$\lambda_\Phi(x)\leq \epsilon+\limsup_{k\to \infty}\frac{1}{kn_0}\log \psi_{kn_0}(x)\leq \epsilon+ (\log D)/n_0.$$  Take integration with respect to $\mu$ to get  $\Phi_*(\mu)\leq \epsilon+(\log D)/n_0$.

To prove (3), we first assume that $\Phi$ is sub-additive.  Let
$\mu=\int_{\Omega} \theta \ d m(\theta)$ be the ergodic
decomposition of $\mu\in \mathcal{M}(X,T)$.  Let $C_1=\max_{x\in
X}|\log \phi_1(x)|$. Then
\begin{align}\label{mul-eq}
\frac{1}{n}\int \log \phi_n(x) ~d \theta(x)\le C_1 \quad \text{ for
all }\theta\in \Omega, \, n\in \mathbb{N}.
\end{align}
Define $h_k(\theta)=\frac{1}{2^k} \int\log \phi_{2^k}(x) ~d
\theta(x)$ for $\theta\in \mathcal{M}(X,T)$ and $k\in \mathbb{N}$.
Since  $\Phi$ is sub-additive and $\theta$ is invariant, we have
$C_1\ge h_1(\theta)\ge h_2(\theta)\ge \cdots$ and
$h_k(\theta)\searrow \Phi_*(\theta)$. By \eqref{mul-eq}, we have
\begin{align*}
\Phi_*(\mu)&=\lim \limits_{n\rightarrow \infty} \frac{1}{n}\int \log
\phi_n(x) ~d \mu(x)= \lim \limits_{k\rightarrow \infty}
\int_{\Omega}\frac{1}{2^k} \int \log \phi_{2^k}(x) ~d \theta(x)
d m(\theta)\\
&=\lim \limits_{k\rightarrow \infty} \int h_k(\theta) ~d m(\theta)
=\int_{\Omega}\lim \limits_{k\rightarrow \infty} h_k(\theta)~d m
(\theta)=\int_{\Omega}\Phi_*(\theta)~d m (\theta),
\end{align*}
where we  use the monotone convergence theorem for  the fourth
equality. Hence we prove (3) in the case that  $\Phi$ is
sub-additive. Now assume that $\Phi$ is asymptotically sub-additive.
For $\epsilon>0$, let $\Psi$ be given as in the first paragraph of
our proof. Then $|\Phi_*(\theta)-\Psi_*(\theta)|\leq \epsilon$ for
any $\theta\in \M(X,T)$. It together with $\Psi_*(\mu)=\int_\Omega
\Psi_*(\theta) ~d m(\theta)$ yields $ \left|\Phi_*(\mu)-\int_\Omega
\Phi_*(\theta) ~d m(\theta)\right|\leq 2\epsilon. $ Letting
$\epsilon\to 0$, we obtain the desired identity for $\Phi$. This
finishes the proof.
\end{proof}

Let $\M(X)$ denote the space of  Borel probability measures on $X$
endowed with the weak-star topology. Then we have
\begin{lem}
\label{lem-3.2} Suppose $\{\nu_n\}_{n=1}^\infty$ is a sequence in
$\mathcal{M}(X)$. We form the new sequence $\{\mu_n\}_{n=1}^\infty$
by $\mu_n=\frac 1n \sum_{i=0}^{n-1}\nu_n\circ T^{-i}$. Assume that
$\mu_{n_i}$ converges to $\mu$ in $\mathcal{M}(X)$ for some
subsequence  $\{n_i\}$ of natural numbers.  Then $\mu\in
\mathcal{M}(X,T)$ and
$$
\limsup_{i\to\infty}\frac{1}{n_i}\int \log
\phi_{n_i}(x)\;d\nu_{n_i}(x)\leq \Phi_*(\mu).
$$
\end{lem}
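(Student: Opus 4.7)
The plan is to first verify $T$-invariance of $\mu$, then reduce the limsup inequality to a genuinely sub-additive potential, and finally apply a Kingman-style averaging argument to compare $(1/n)\int\log\psi_n\,d\nu_n$ with $(1/m)\int\log\psi_m\,d\mu$ for large fixed $m$.

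For invariance, fix $f\in C(X)$. A telescoping computation yields
\begin{equation*}
\int f\,d(\mu_n\circ T^{-1}) - \int f\,d\mu_n = \frac{1}{n}\left(\int f\circ T^n\,d\nu_n - \int f\,d\nu_n\right),
\end{equation*}
whose absolute value is at most $2\|f\|_\infty/n\to 0$. Restricting to $n=n_i$ and passing to the limit gives $\mu=\mu\circ T^{-1}$.

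For the inequality, fix $\epsilon>0$ and choose a sub-additive potential $\Psi=\{\log\psi_n\}$ with $\limsup_n n^{-1}\sup_X|\log\phi_n-\log\psi_n|\le\epsilon$. Uniform approximation then gives, for all sufficiently large $n$, $(1/n)\int\log\phi_n\,d\nu_n \le (1/n)\int\log\psi_n\,d\nu_n + 2\epsilon$, and similarly $\Psi_*(\mu)\le\Phi_*(\mu)+\epsilon$. It thus suffices to prove $\limsup_i (1/n_i)\int\log\psi_{n_i}\,d\nu_{n_i} \le \Psi_*(\mu)$ and then let $\epsilon\to 0$. Now fix $m\in\N$; for each $\ell\in\{0,\ldots,m-1\}$ and $n\ge 2m$, write $n-\ell=q_\ell m + r_\ell$ with $0\le r_\ell<m$ and iterate sub-additivity to obtain
\begin{equation*}
\log\psi_n(x) \le \log\psi_\ell(x) + \sum_{j=0}^{q_\ell-1}\log\psi_m(T^{\ell+jm}x) + \log\psi_{r_\ell}(T^{n-r_\ell}x).
\end{equation*}
Summing over $\ell$ and dividing by $m$, each index $k\in\{0,\ldots,n-m\}$ appears exactly once in the interior double sum, while the remaining terms involve only $\log\psi_j$, $j\le m$, at $O(m)$ points. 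Integrating against $\nu_n$, dividing by $n$, and introducing $\tilde\mu_{n,m}:=(n-m+1)^{-1}\sum_{k=0}^{n-m}\nu_n\circ T^{-k}$ (for which $\|\tilde\mu_{n,m}-\mu_n\|_{\mathrm{TV}}=O(m/n)$, so $\tilde\mu_{n_i,m}\to\mu$ in the weak-$*$ topology), gives
\begin{equation*}
\frac{1}{n}\int\log\psi_n\,d\nu_n \;\le\; \frac{1}{m}\int\log\psi_m\,d\tilde\mu_{n,m} + o_n(1).
\end{equation*}

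The main obstacle is that $\log\psi_m$ may attain the value $-\infty$ on a closed set, so weak-$*$ convergence does not directly control $\int\log\psi_m\,d\tilde\mu_{n_i,m}$. I resolve this by truncation: for each $M>0$, the function $\log\psi_m\vee(-M)$ is continuous, whence
\begin{equation*}
\limsup_{i\to\infty}\frac{1}{n_i}\int\log\psi_{n_i}\,d\nu_{n_i} \;\le\; \frac{1}{m}\int(\log\psi_m\vee(-M))\,d\mu.
\end{equation*}
Letting $M\to\infty$ and applying monotone convergence yields the bound $(1/m)\int\log\psi_m\,d\mu$ (allowed to be $-\infty$). Finally, sub-additivity of $\Psi$ together with $T$-invariance of $\mu$ forces $(1/m)\int\log\psi_m\,d\mu \searrow \Psi_*(\mu)$ as $m\to\infty$. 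Letting first $m\to\infty$ and then $\epsilon\to 0$ completes the proof.
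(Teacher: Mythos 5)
Your proof is correct and, at the level of strategy, coincides with the paper's: the reduction from an asymptotically sub-additive $\Phi$ to a genuinely sub-additive $\Psi$ via the defining $\epsilon$-approximation is precisely what the paper does. The difference is that the paper's proof of Lemma~\ref{lem-3.2} consists of this reduction followed by a bare citation of \cite[Lemma~2.3]{CFH08} for the sub-additive case, whereas you supply a complete proof of that cited lemma. Your block decomposition over the $m$ residue classes $\ell\in\{0,\dots,m-1\}$ (with each index $k\in\{0,\dots,n-m\}$ correctly appearing exactly once in the inner double sum, and the $O(m)$ boundary terms controlled by the finite upper bounds $\sup_X\log\psi_j$), the introduction of the perturbed empirical measures $\tilde\mu_{n,m}$, the truncation $\log\psi_m\vee(-M)$ to replace the upper semi-continuous integrand by a continuous one compatible with weak-$*$ convergence, and the final passage $M\to\infty$ then $m\to\infty$ reproduce, self-containedly, what the paper outsources. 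This buys independence from the reference at the cost of length.

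One small imprecision: the claim that $(1/m)\int\log\psi_m\,d\mu \searrow \Psi_*(\mu)$ monotonically in $m$ is not correct for general $m$. The sequence $a_m:=\int\log\psi_m\,d\mu$ is sub-additive, but $a_m/m$ need not be monotone; monotonicity holds only along dyadic $m$ (as exploited in the paper's proof of Proposition~\ref{pro-1195}(3)). What your argument actually needs, and what Fekete's sub-additive lemma gives, is only $\lim_m a_m/m = \inf_m a_m/m = \Psi_*(\mu)$. Since your bound
\begin{equation*}
\limsup_{i\to\infty}\frac{1}{n_i}\int \log\psi_{n_i}\,d\nu_{n_i}\;\le\;\frac{1}{m}\int\log\psi_m\,d\mu
\end{equation*}
holds for every fixed $m$, taking the infimum over $m$ already yields $\Psi_*(\mu)$; no monotonicity is required. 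This is cosmetic and does not affect the validity of the argument.
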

\begin{proof}
The lemma  was proved in \cite[Lemma 2.3]{CFH08} for the case that
$\Phi$ is sub-additive. Here we shall show that it can be extended
to the case that $\Phi$ is asymptotically sub-additive.

Let $\Phi$ be an asymptotically sub-additive potential on $X$ and
$\epsilon>0$. Then there exist a sub-additive potential $\Psi=\{\log
\psi_n\}_{n=1}^\infty$ on $X$ and $n_0$ such that $|\log
\phi_n(x)-\log \psi_n(x)|\leq n\epsilon$ for any $n\geq n_0$ and
$x\in X$. Hence
$$
\limsup_{i\to\infty}\frac{1}{n_i}\int \log
\phi_{n_i}(x)\;d\nu_{n_i}(x)\leq
\limsup_{i\to\infty}\frac{1}{n_i}\int \log
\psi_{n_i}(x)\;d\nu_{n_i}(x)+\epsilon\leq \Psi_*(\mu)+\epsilon\leq
\Phi_*(\mu)+2\epsilon.
$$
Letting $\epsilon\to 0$, we obtain the desired inequality for
$\Phi$.
\end{proof}

\begin{lem} \label{lem-mf-sa} Define
$\overline{\beta}({\Phi})=\limsup_{n\rightarrow \infty} \sup_{x\in
X}\frac{\log \phi_n(x)}{n}$. Then
\begin{enumerate}
\item $\overline{\beta}(\Phi)\in \mathbb{R}\cup
\{-\infty\}$ and $\overline{\beta}({\Phi})=\liminf_{n\rightarrow
\infty} \sup_{x\in X}\frac{\log \phi_n(x)}{n}$.

\item  $\overline{\beta}(\Phi)=\sup \{ \Phi_*(\mu):\mu\in \mathcal{M}(X,T)\}$ and there
exists an ergodic measure $\nu\in \mathcal{M}(X,T)$ such that
$\overline{\beta}(\Phi)=\Phi_*(\nu)$.

\item The following conditions are
equivalent:

\begin{enumerate}
\item $\overline{\beta}(\Phi)=-\infty$;

\item  $\lambda_{\Phi}(x)=-\infty$ for all $x\in X$;

\item $\Phi_*(\mu)=-\infty$ for all $\mu\in
\mathcal{M}(X,T)$;

\item $P(T,\Phi)=-\infty$.
\end{enumerate}

\item If $\overline{\beta}(\Phi)>-\infty$, then
$\htop(T)+\overline{\beta}(\Phi)\ge P(T,\Phi)\ge
\overline{\beta}(\Phi)>-\infty$. Moreover if we assume in addition
that  $\htop(T)<\infty$, then $P(T,\Phi)\in \mathbb{R}$.
\end{enumerate}
\end{lem}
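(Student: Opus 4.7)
For (1), the idea is to reduce to Fekete's subadditive lemma via approximation. Set $a_n := \sup_{x\in X}\log\phi_n(x)$. If $\Phi$ were sub-additive, then $a_{n+m}\le a_n+a_m$, so Fekete gives $\lim_{n\to\infty}a_n/n=\inf_n a_n/n\in\mathbb{R}\cup\{-\infty\}$. For asymptotically sub-additive $\Phi$, given $\epsilon>0$, choose a sub-additive $\Psi=\{\log\psi_n\}$ with $|\log\phi_n-\log\psi_n|\le n\epsilon$ on $X$ for all large $n$; then the limit of $b_n/n:=(\sup_x\log\psi_n(x))/n$ exists, and $|a_n/n-b_n/n|\le\epsilon$ for large $n$, so $\limsup_{n}a_n/n$ and $\liminf_{n}a_n/n$ differ by at most $2\epsilon$. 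Let $\epsilon\to 0$. The value lies in $\mathbb{R}\cup\{-\infty\}$ because $b_n$ is bounded above by $nb_1$.

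For (2), the upper bound $\Phi_*(\mu)\le\overline{\beta}(\Phi)$ is immediate by integrating the inequality $\log\phi_n(x)\le\sup\log\phi_n$ and passing to the limit in $n$. For the matching lower bound, pick $x_n\in X$ with $\log\phi_n(x_n)/n\to\overline{\beta}(\Phi)$, set $\nu_n:=\delta_{x_n}$ and $\mu_n:=\frac{1}{n}\sum_{i=0}^{n-1}\nu_n\circ T^{-i}$. Extract a weak-$*$ convergent subsequence $\mu_{n_i}\to\mu$; by Lemma \ref{lem-3.2}, $\mu\in\M(X,T)$ and
$$
\Phi_*(\mu)\ge\limsup_{i\to\infty}\frac{1}{n_i}\int\log\phi_{n_i}\,d\nu_{n_i}=\lim_{i\to\infty}\frac{\log\phi_{n_i}(x_{n_i})}{n_i}=\overline{\beta}(\Phi).
$$
This yields the supremum identity and its attainment by some invariant measure. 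To promote this to an ergodic measure, ergodic-decompose $\mu=\int_{\E(X,T)}\theta\,dm(\theta)$; by Proposition \ref{pro-1195}(3), $\overline{\beta}(\Phi)=\Phi_*(\mu)=\int\Phi_*(\theta)\,dm(\theta)$, and since $\Phi_*(\theta)\le\overline{\beta}(\Phi)$ pointwise, equality $\Phi_*(\theta)=\overline{\beta}(\Phi)$ must hold $m$-a.e., yielding the desired ergodic $\nu$.

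For (3), I will set up the cycle (a)$\Rightarrow$(b)$\Rightarrow$(c)$\Rightarrow$(a) and separately show (a)$\Leftrightarrow$(d). The implication (a)$\Rightarrow$(b) follows because $\log\phi_n(x)/n\le a_n/n\to-\infty$ uniformly in $x$, forcing $\lambda_\Phi(x)=-\infty$ everywhere. Then (b)$\Rightarrow$(c) is Proposition \ref{pro-1195}(1): $\Phi_*(\mu)=\int\lambda_\Phi\,d\mu=-\infty$ (with the convention that an a.e.\ value of $-\infty$ forces the integral to be $-\infty$, justified by monotone convergence after truncating $\log\phi_n$ from below). The closure (c)$\Rightarrow$(a) is immediate from part (2), since $\overline{\beta}(\Phi)=\sup_\mu\Phi_*(\mu)$. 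The equivalence (a)$\Leftrightarrow$(d) is a direct reading of Theorem \ref{thm-3.3}: if every $\Phi_*(\mu)=-\infty$ then $P(T,\Phi)=-\infty$ by definition there, and conversely $P(T,\Phi)\ge h_\nu(T)+\Phi_*(\nu)\ge\Phi_*(\nu)$, so $P(T,\Phi)=-\infty$ forces every $\Phi_*(\nu)=-\infty$.

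For (4), assume $\overline{\beta}(\Phi)>-\infty$ and apply Theorem \ref{thm-3.3} together with part (2). The ergodic $\nu$ from (2) satisfies $\Phi_*(\nu)=\overline{\beta}(\Phi)\in\mathbb{R}$, so Theorem \ref{thm-3.3} gives
$$
P(T,\Phi)\ge h_\nu(T)+\Phi_*(\nu)\ge\overline{\beta}(\Phi).
$$
Conversely, taking the supremum in Theorem \ref{thm-3.3} and bounding $h_\mu(T)\le\htop(T)$ and $\Phi_*(\mu)\le\overline{\beta}(\Phi)$ yields $P(T,\Phi)\le\htop(T)+\overline{\beta}(\Phi)$, and the second sentence of (4) follows instantly when $\htop(T)<\infty$.

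The main obstacle is part (2): the upper bound is routine, but the matching lower bound requires the invariantization construction that relies crucially on Lemma \ref{lem-3.2}, since in the asymptotically sub-additive setting one cannot directly evaluate $\Phi_*$ at a Dirac mass along an arbitrary orbit. Once (2) is in hand, parts (3) and (4) fall out quickly from the variational principle.
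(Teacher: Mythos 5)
Your proposal is correct and follows essentially the same route as the paper: reduction to Fekete's lemma via approximation for (1), the Dirac-mass invariantization via Lemma \ref{lem-3.2} plus ergodic decomposition for (2), and the variational principle (Theorem \ref{thm-3.3}) for (3)--(4). The only slight reorganization is in part (3), where you establish the cycle (b)$\Rightarrow$(c) by integrating $\lambda_\Phi$, whereas the paper goes directly from (b) or (c) back to (a) by noting that the ergodic measure $\nu$ of part (2) satisfies $\lambda_\Phi=\overline{\beta}(\Phi)$ $\nu$-a.e.; both are equally short and correct.
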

\begin{proof}  Let $\epsilon>0$. Take a sub-additive potential $\Psi=\{\log
\psi_n\}_{n=1}^\infty$ on $(X,T)$ such that $|\log \phi_n(x)-\log
\psi_n(x)|<n\epsilon$ for all $n\geq n_0$ and $x\in X$. Let
$C=\max_{x\in X} |\psi_1(x)|$. Then $\psi_n(x)\leq C^n$. Thus for
$n\geq n_0$ we have $\log \phi_n(x)\leq n(\log C+\epsilon)$ and
hence $\sup_{x\in X}\frac{\log \phi_n(x)}{n} \le \log C+\epsilon$.
This implies $\overline{\beta}(\Phi)\in \mathbb{R}\cup \{-\infty\}$.
Denote $b_n=\sup_{x\in X}\log \psi_n(x)$. Then by the sub-additivity
of $\Psi$, $b_{n+m}\leq b_n+b_m$. It follows that $\liminf_{n\to
\infty}b_n/n=\limsup_{n\to \infty}b_n/n$ and thus $\liminf_{n\to
\infty}\sup_{x\in X}\log \phi_n(x)/n\geq   \limsup_{n\to
\infty}\sup_{x\in X}\log \phi_n(x)/n-2\epsilon$. Letting
$\epsilon\to 0$, we obtain $$\liminf_{n\to \infty}\sup_{x\in X}\log
\phi_n(x)/n= \limsup_{n\to \infty}\sup_{x\in X}\log \phi_n(x)/n.$$
This proves (1).

 For any $\mu\in \mathcal{M}(X,T)$, by Proposition
\ref{pro-1195}(1),  $$\Phi_*(\mu)=\lim_{n\rightarrow \infty} \int_X
\frac{\log \phi_n(x)}{n} d \mu(x)\le \limsup_{n\rightarrow \infty}
\sup_{x\in X} \frac{\log \phi_n(x)}{n}=\overline{\beta}(\Phi).$$
Hence $\sup\{ \Phi_*(\mu):\mu\in \mathcal{M}(X,T)\}\le
\overline{\beta}(\Phi)$. Conversely, choose $n_i\rightarrow \infty$
and $x_i\in X$  such that $\lim_{i\rightarrow \infty} \frac{\log
\phi_{n_i}(x_i)}{n_i}=\limsup_{n\rightarrow \infty} \sup_{x\in X}
\frac{\log \phi_n(x)}{n}$. Let
$\mu_{n_i}=\frac{1}{n_i}\sum_{j=0}^{n_i-1}\delta_{T^jx_i}$ for $i\in
\mathbb{N}$. Since $\mathcal{M}(X)$ is compact, we may assume that
$\mu_{n_i}\rightarrow \mu$ for some $\mu\in \mathcal{M}(X)$. By
Lemma \ref{lem-3.2}, $\mu\in \mathcal{M}(X,T)$ and
$\lim_{i\to\infty}\frac{1}{n_i}\int \log  \phi_{n_i}(x)\;d
\delta_{x_i}\le \Phi_*(\mu)$, i.e.,
$\overline{\beta}(\Phi)=\lim_{i\to\infty}\frac{\log
\phi_{n_i}(x_i)}{n_i}\le \Phi_*(\mu)$. Moreover, by Proposition
\ref{pro-1195}(3), there exists an ergodic measure $\nu\in
\mathcal{M}(X,T)$ such that $\overline{\beta}(\Phi)\le \Phi_*(\nu)$.
Clearly, $\overline{\beta}(\Phi)=\Phi_*(\nu)$. This proves (2).

To show (3), note that the implications $(a)\Rightarrow (b),(c)$ are
direct.  By (2), there exists an ergodic measure $\nu\in
\mathcal{M}(X,T)$ such that $\overline{\beta}(\Phi)=\Phi_*(\nu)$. By
Proposition \ref{pro-1195}(1),
$\lambda_{\Phi}(x)=\overline{\beta}(\Phi)$ for $\nu$-a.e. $x\in X$.
Hence $\overline{\beta}(\Phi)=-\infty$ when $(b)$ or $(c)$ occurs.
This shows that (b) or (c) implies $(a)$. The equivalence of $(c)$
and $(d)$ comes from Theorem \ref{thm-3.3}. This proves (3). Part
(4) follows directly from (2) and Theorem \ref{thm-3.3}.
\end{proof}

Now we give some properties of asymptotically additive potentials,
which just follow from Proposition \ref{pro-1195}(2) and Lemma
\ref{lem-3.2}.
\begin{lem}
\label{lem-3.4} Assume that $\Phi=\{\log \phi_n \}_{n=1}^\infty$ is
an asymptotically additive potential on $(X,T)$. Then
\begin{itemize}
\item[(i)] The map $\mu\mapsto \Phi_*(\mu)$ is continuous on $\M(X,T)$.
\item[(ii)] Suppose $\{\nu_n\}_{n=1}^\infty$ is a sequence in $\mathcal{M}(X)$. We form the new sequence
$\{\mu_n\}_{n=1}^\infty$ by $\mu_n=\frac 1n
\sum_{i=0}^{n-1}\nu_n\circ T^{-i}$. Assume that $\mu_{n_i}$
converges to $\mu$ in $\mathcal{M}(X)$ for some subsequence
$\{n_i\}$ of natural numbers.  Then $\mu\in \mathcal{M}(X,T)$, and
moreover
$$
\lim_{i\to\infty}\frac{1}{n_i}\int \log
\phi_{n_i}(x)\;d\nu_{n_i}(x)=\Phi_*(\mu).
$$
\item[(iii)] $\Omega:=\{\Phi_*(\mu):\; \mu\in \M(X,T)\}$ is an interval which equals $[\underline{\beta}(\Phi),
\overline{\beta}(\Phi)]$, where
$\underline{\beta}(\Phi):=\lim_{n\to \infty}(1/n)\inf_{x\in X}\log
\phi_n(x)$.
\end{itemize}

\end{lem}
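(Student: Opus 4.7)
The unifying observation is that when $\Phi = \{\log \phi_n\}$ is asymptotically additive, both $\Phi$ and $-\Phi = \{\log(1/\phi_n)\}$ are asymptotically sub-additive, so every fact we have for one sub-additive potential can be applied twice and the two one-sided estimates can then be combined into a two-sided identity. Concretely, one has $(-\Phi)_*(\mu) = -\Phi_*(\mu)$ for every $\mu \in \M(X,T)$ and $\overline{\beta}(-\Phi) = -\underline{\beta}(\Phi)$, and the limit in the definition of $\underline{\beta}(\Phi)$ exists because Lemma \ref{lem-mf-sa}(1) applied to $-\Phi$ guarantees $\overline{\beta}(-\Phi) = \lim_n (1/n) \sup_x \log(1/\phi_n(x))$ exists in $\R \cup \{-\infty\}$. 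Thus throughout the proof I will simply pair each application of a sub-additive result for $\Phi$ with its mirror application for $-\Phi$.

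For part (i), Proposition \ref{pro-1195}(2) applied to $\Phi$ says $\mu \mapsto \Phi_*(\mu)$ is upper semi-continuous on $\M(X,T)$; the same proposition applied to $-\Phi$ says $\mu \mapsto -\Phi_*(\mu)$ is upper semi-continuous, i.e. $\Phi_*$ is lower semi-continuous. Hence $\Phi_*$ is continuous (and in particular real-valued, since $\Phi_*(\mu) = -(-\Phi)_*(\mu)$ is finite from both sides). For part (ii), Lemma \ref{lem-3.2} applied to $\Phi$ yields that any weak-$*$ limit $\mu$ of $\mu_{n_i}$ lies in $\M(X,T)$ and
$$
\limsup_{i\to\infty}\frac{1}{n_i}\int \log \phi_{n_i}(x)\, d\nu_{n_i}(x) \le \Phi_*(\mu).
$$
Applying the same lemma to $-\Phi$ gives the dual inequality
$$
\limsup_{i\to\infty}\frac{1}{n_i}\int \log(1/\phi_{n_i}(x))\, d\nu_{n_i}(x) \le (-\Phi)_*(\mu) = -\Phi_*(\mu),
$$
which is exactly $\liminf_{i\to\infty} (1/n_i) \int \log \phi_{n_i}(x)\, d\nu_{n_i}(x) \ge \Phi_*(\mu)$. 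The two bounds together deliver the asserted equality.

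For part (iii), note that $\Phi_*$ is affine on $\M(X,T)$ (each $\mu \mapsto (1/n)\int \log \phi_n\, d\mu$ is linear, and the pointwise limit of linear functionals on a convex set is affine). Since $\M(X,T)$ is compact and convex and $\Phi_*$ is continuous by (i), the image $\Omega = \Phi_*(\M(X,T))$ is a compact connected subset of $\R$, i.e.\ a bounded closed interval. By Lemma \ref{lem-mf-sa}(2) applied to $\Phi$ we have $\max \Omega = \overline{\beta}(\Phi)$, and by Lemma \ref{lem-mf-sa}(2) applied to $-\Phi$ we have $\max\{-\Phi_*(\mu):\mu\in\M(X,T)\} = \overline{\beta}(-\Phi) = -\underline{\beta}(\Phi)$, giving $\min \Omega = \underline{\beta}(\Phi)$. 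Therefore $\Omega = [\underline{\beta}(\Phi), \overline{\beta}(\Phi)]$.

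There is no real technical obstacle; the only points that require mild care are (a) verifying that $\Phi_*$ is actually real-valued on $\M(X,T)$ in the asymptotically additive case (which follows from the upper semi-continuity of both $\Phi_*$ and $(-\Phi)_*$ plus compactness of $\M(X,T)$), and (b) identifying $\overline{\beta}(-\Phi)$ with $-\underline{\beta}(\Phi)$ so that Lemma \ref{lem-mf-sa} can be used symmetrically. Both are immediate from the definitions, so the argument reduces, as the paper indicates, to a clean double application of Proposition \ref{pro-1195}(2) and Lemma \ref{lem-3.2}.
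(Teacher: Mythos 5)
Your proof is correct and follows exactly the route the paper indicates: the paper only states that Lemma~\ref{lem-3.4} ``just follows from Proposition~\ref{pro-1195}(2) and Lemma~\ref{lem-3.2},'' and you supply the missing details by applying each of those results, together with Lemma~\ref{lem-mf-sa}(2), symmetrically to $\Phi$ and $-\Phi$. The auxiliary identities you use ($(-\Phi)_*(\mu)=-\Phi_*(\mu)$, $\overline{\beta}(-\Phi)=-\underline{\beta}(\Phi)$, finiteness of $\Phi_*$) are all verified correctly, so nothing is missing.
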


In the end of this section, we give the following proposition.

\begin{pro}
\label{pro-le} Let $\Phi=\{\log \phi_n\}$ be a potential on $X$
(i.e., each $\phi_n$ is a non-negative continuous function on $X$).
We have the following statements.
\begin{itemize}
\item[(i)] If there exists  $C\geq 1$ such that $\phi_{n+m}(x)\leq C\phi_n(x)\phi_m(T^nx)$ for all
$x\in X$ and $n,m\in \N$, then $\Phi$ is asymptotically
sub-additive.
\item[(ii)] If there exists $C\geq 1$  such that
$$0<C^{-1}\phi_n(x)\phi_m(T^nx)\leq \phi_{n+m}(x)\leq C\phi_n(x)\phi_m(T^nx)
$$
for all $x\in X$ and $n,m\in \N$, then $\Phi$ is asymptotically
additive.
\item[(iii)] If $\phi_n(x)>0$ for all $n\in \N$, $x\in X$  and  there exists a continuous function $g$ on $X$
such that $$\log \phi_{n+1}(x)-\log \phi_n(Tx)\to g(x)$$ uniformly
on $X$ as $n\to \infty$, then $\Phi$ is asymptotically additive.
\item[(iv)]$\Phi$ is asymptotically additive if and only if for any $\epsilon >0$, there exists an additive
potential $\Psi=\{\log \psi_n\}_{n=1}^\infty$ on $X$ such that
\begin{equation}
\label{e-1504f} \limsup_{n\to \infty}\frac{1}{n}\sup_{x\in X}|\log
\phi_n(x)-\log \psi_n(x)|\leq \epsilon.
\end{equation}
\end{itemize}

\end{pro}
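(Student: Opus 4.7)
For (i), I would simply set $\psi_n := C\phi_n$. Then $\psi_{n+m}(x) = C\phi_{n+m}(x) \leq C^2\phi_n(x)\phi_m(T^n x) = \psi_n(x)\psi_m(T^n x)$, so $\Psi = \{\log\psi_n\}$ is sub-additive, and $|\log\phi_n - \log\psi_n| \equiv \log C$ is bounded, giving asymptotic sub-additivity trivially. For (ii), apply (i) to $\Phi$ directly; for $-\Phi$, set $\tilde\phi_n := 1/\phi_n$ (positive and continuous because the lower bound forces $\phi_n > 0$) and observe that $\tilde\phi_{n+m}(x) \leq C\tilde\phi_n(x)\tilde\phi_m(T^n x)$, so (i) applies again.

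\textbf{Plan for (iii).} Set $\psi_n(x) := \exp(S_n g(x))$ where $S_n g(x) = \sum_{j=0}^{n-1} g(T^j x)$; then $\Psi$ is additive. Writing $h_k(x) := \log\phi_{k+1}(x) - \log\phi_k(Tx)$, iteration of the defining relation $\log\phi_{k+1}(x) = \log\phi_k(Tx) + h_k(x)$ yields the telescoping identity
\[
\log\phi_n(x) = \log\phi_1(T^{n-1}x) + \sum_{k=1}^{n-1} h_k(T^{n-1-k}x).
\]
Subtracting $S_n g(x) = \sum_{k=0}^{n-1} g(T^{n-1-k}x)$ and using the hypothesis $\|h_k - g\|_\infty \to 0$, I would bound $|\log\phi_n(x) - S_n g(x)|$ by a constant plus $n\epsilon$ (for arbitrary $\epsilon > 0$ and $n$ sufficiently large). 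Thus the limsup in question is $0$, making $\Phi$ asymptotically close to the additive $\Psi$, and in particular asymptotically additive.

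\textbf{Plan for (iv).} The direction ($\Leftarrow$) is immediate: an additive $\Psi = \{S_n g\}$ is simultaneously sub-additive and $-\Psi = \{S_n(-g)\}$ is additive (hence sub-additive), so such a $\Psi$ witnesses asymptotic sub-additivity of both $\Phi$ and $-\Phi$. For the harder direction ($\Rightarrow$), I plan to use ``snapshots'' $g_N(x) := (1/N)\log\phi_N(x)$. Given $\epsilon > 0$, I first extract from the asymptotic sub-additivity of $\Phi$ and of $-\Phi$ (via triangle inequalities on the respective sub-additive approximants) an integer $N_\delta$ such that
\[
\bigl|\log\phi_{n+m}(x) - \log\phi_n(x) - \log\phi_m(T^n x)\bigr| \leq K(n+m)\delta
\qquad (n,m \geq N_\delta,\ x \in X),
\]
for a universal constant $K$. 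Fixing $N \geq N_\delta$ and writing $n = kN + r$ with $0 \leq r < N$, iteration gives $\log\phi_n(x) = \sum_{j=0}^{k-1}\log\phi_N(T^{jN}x) + O(M_N) + O(n\delta)$, where $M_N$ is a bound for $|\log\phi_j|$ with $j < N$. Doing the same starting at $T^l x$ for each $0 \leq l < N$ and averaging, I would show
\[
S_n g_N(x) = \frac{1}{N}\sum_{l=0}^{N-1}\log\phi_n(T^l x) + O(M_N) + O(n\delta),
\]
and finally use approximate additivity once more to replace $\log\phi_n(T^l x)$ by $\log\phi_n(x)$ up to $O(M_N) + O(n\delta)$. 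Dividing by $n$, taking limsup, and choosing $\delta$ small relative to $\epsilon$ yields the required additive approximation.

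\textbf{Main obstacle.} The essential difficulty is the ($\Rightarrow$) direction of (iv): converting a pair of asymptotic one-sided (sub-additive and super-additive) approximations into a single uniformly close additive one. The delicate bookkeeping lies in the last step above, namely reconciling the subsampled sum $\sum_{j=0}^{k-1}\log\phi_N(T^{jN}x)$ with the full Birkhoff sum $(1/N)\sum_{i=0}^{n-1}\log\phi_N(T^i x)$, controlling the boundary remainder $r < N$, and ensuring that all error terms are uniform in $x \in X$ rather than merely pointwise.
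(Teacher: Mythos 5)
Parts (i) and (ii) are correct and coincide exactly with the paper's proof: set $\psi_n = C\phi_n$, verify exact sub-additivity, and note the error $(\log C)/n$ vanishes; for (ii) apply the same to both $\Phi$ and $-\Phi$. Part (iii) is also essentially the paper's argument: the paper telescopes $\log\phi_n(x) - \sum_{i=0}^{n-1} g(T^ix) = \sum_{i=1}^n\bigl(\log\phi_i(T^{n-i}x) - \log\phi_{i-1}(T^{n-i+1}x) - g(T^{n-i}x)\bigr)$ (with the convention $\log\phi_0\equiv 0$) and uses the Ces\`aro convergence $\frac{1}{n}\sum_{i=1}^n r_i\to 0$ with $r_i = \|\log\phi_i - \log\phi_{i-1}\circ T - g\|_\infty$; your telescoping identity is the same modulo bookkeeping of the base case, and the uniformity in $x$ follows because $r_i$ are sup-norms. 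The $\Leftarrow$ direction of (iv) is correct and trivial.

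The $\Rightarrow$ direction of (iv) has a genuine gap. Your plan iterates the two-sided approximate additivity
$\bigl|\log\phi_{n+m}(x)-\log\phi_n(x)-\log\phi_m(T^nx)\bigr|\leq K(n+m)\delta$
and claims
$\log\phi_n(x)=\sum_{j=0}^{k-1}\log\phi_N(T^{jN}x)+O(M_N)+O(n\delta)$ with $n=kN+r$. That error estimate is wrong. Splitting off blocks of length $N$ one at a time, the $j$-th split occurs on a word of length $n-(j-1)N$ and costs $K\bigl(n-(j-1)N\bigr)\delta$, so the accumulated error is $K\delta\sum_{j=1}^{k}\bigl(n-(j-1)N\bigr)=\Theta(n^2\delta/N)$. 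After dividing by $n$ this is $\Theta(n\delta/N)$, which does not tend to $0$ as $n\to\infty$ for fixed $\delta,N$; the same problem persists under a dyadic split (error $\Theta(n\delta\log(n/N))$). In other words, the two-sided approximate additivity cannot be iterated freely, because its error bound is proportional to the total length of the word, not the length of the piece being split off.

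The paper sidesteps this by never iterating the approximate relation. Instead, given $\epsilon>0$, it picks an exactly sub-additive $\Psi=\{\log\psi_n\}$ with $\sup_x|\log\phi_n-\log\psi_n|\leq n\epsilon/2$ for $n\geq K$, applies \cite[Lemma~2.2]{CFH08} to $\Psi$ (which yields $\log\psi_n(x)\leq 2k\log C + \frac{1}{k}\sum_{i=0}^{n-k}\log\psi_k(T^ix)$ with \emph{no} $n$-dependent error, because sub-additivity is exact), and only then translates $\psi\to\phi$ on both sides, incurring a single $O(n\epsilon)$ cost. The symmetric estimate using the asymptotic sub-additivity of $-\Phi$ gives the matching lower bound, so $\Psi'=\{\frac{1}{k}\sum_{i=0}^{n-1}\log\phi_k\circ T^i\}$ is the desired additive potential. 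Your plan can be repaired along exactly these lines: iterate the exact sub-additivity and super-additivity of the two one-sided approximants, not the approximate additivity of $\Phi$, and convert to $\Phi$ only once at the very end. The subsample-vs-Birkhoff reconciliation you flag as the main obstacle is real but routine (it is the content of \cite[Lemma~2.2]{CFH08}); the deeper issue is the order of magnitude of the accumulated error in your iteration step.
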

\begin{proof}
To see (i), define $\Psi=\{\log \psi_n\}_{n=1}^\infty$ by
$\psi_n(x)=C \phi_n(x)$. Then
$$
\psi_{n+m}(x)=C\phi_{n+m}(x)\leq
C^2\phi_{n}(x)\phi_m(x)=\psi_n(X)\psi_m(T^nx),
$$
Hence $\Psi$ is sub-additive. Clearly, $(\log\psi_n(x)-\log
\phi_n(x))/n=(\log C)/n\to 0$ as $n\to \infty$. Hence $\Phi$ is
asymptotically sub-additive. This proves (i).  Part (ii) follows
directly from (i).

To show (iii), define $r_n=\sup_{x\in X} |\log \phi_{n}(x)-\log
\phi_{n-1}(Tx)-g(x)|$, with the convention $\log \phi_0(x) \equiv
0$. It is clear that $\lim_{n\rightarrow \infty} r_n=0$. Let
$g_n=\sum_{i=0}^{n-1}g \circ T^i$. Then $\mathcal{G}=\{
g_n\}_{n=1}^\infty$ is additive. Note that
\begin{align*}
|\log \phi_n(x)-g_n(x)| &=\left|\sum_{i=1}^n
\left(\log \phi_i(T^{n-i}x)-\log \phi_{i-1}(T^{n-i+1}x)-g(T^{n-i}x)\right)\right| \\
&\le \sum_{i=1}^n |\log \phi_i(T^{n-i}x)-\log
\phi_{i-1}(T^{n-i+1}x)-g(T^{n-i}x)|\le \sum_{i=1}^n r_i.
\end{align*}
Hence $\limsup_{n\rightarrow \infty} \sup_{x\in X} |\frac{\log
\phi_n(x)-g_n(x)}{n}|\le \limsup_{n\rightarrow \infty}
\frac{1}{n}\sum_{i=1}^n r_i=0$ as $\lim_{n\rightarrow +\infty}
r_n=0$. Hence  $\Phi$ is asymptotically additive.

The ``if'' part in (iv) is direct, we only need to show the ``only
if'' part. Assume that $\Phi$ is asymptotically additive, that is,
$\phi_n$ is positive continuous on $X$ for each $n$ and both $\{\log
\phi_n\}_{n=1}^\infty$ and $\{\log (\phi_n)^{-1}\}_{n=1}^\infty$ are
asymptotically sub-additive. We claim that for any $\epsilon>0$,
there exists $K>0$ such that for each $k\geq K$, there exists
$C_{\epsilon,k}>0$ so that
\begin{equation*}
\left|\log \phi_n(x)- \frac{1}{k}\sum_{j=0}^{n-1}\log
\phi_k(T^jx)\right|\leq n\epsilon +C_{\epsilon,k}, \qquad \forall\;
n\geq 2k,\; x\in X,
\end{equation*}
 Clearly the above inequality implies the ``only if'' part in (iv).  Without loss of generality, we show that
\begin{equation}
\label{e-1504g}\log \phi_n(x)\leq  \frac{1}{k}\sum_{j=0}^{n-1}\log
\phi_k(T^jx)+ n\epsilon +C_{\epsilon,k}, \qquad \forall\; n\geq
2k,\; x\in X.
\end{equation}
for certain $C_{\epsilon,k}>0$. Fix $\epsilon>0$. Since $\Phi$ is
asymptotically sub-additive,  there exists a
 sub-additive potential $\Psi=\{\log \psi_n\}_{n=1}^\infty$ on $X$ such that there is $K>0$ and
\begin{align}\label{wen-eq-m1}
|\log \phi_n(x)-\log \psi_n(x)|\le \frac{n\epsilon}{2}, \qquad
\forall \;n\ge K,\; x\in X.
\end{align}
Set $C=\max\{1, \sup_{x\in X}\psi_1(x)\}$. By \cite[Lemma 2.2
]{CFH08},
$$\log \psi_n(x)\leq 2k\log C+\frac{1}{k}\sum_{i=0}^{n-k}\log \psi_k(T^ix),\quad \forall\; x\in X,\; n\geq 2k.
$$
Combining the above inequality with (\ref{wen-eq-m1}), we have for
$k\geq K$,
\begin{equation*}
\begin{split}
\log \phi_n(x)&\leq (2n-k)\epsilon/2+ 2k\log C+\frac{1}{k}\sum_{i=0}^{n-k}\log \phi_k(T^ix)\\
&\leq (2n-k)\epsilon/2+ 2k\log C+
M_k+\frac{1}{k}\sum_{i=0}^{n-1}\log \phi_k(T^ix)
\end{split}
\end{equation*}
for all $x\in X$ and $n\geq 2k$, where $M_k:=\max\{1,\sup_{x\in
X}|\log \phi_k(x)|\}$. This proves (\ref{e-1504g}), with
$C_{\epsilon,k}=2k\log C+ M_k$. We finish the proof of the
proposition.
\end{proof}

\begin{re}
\label{re-last}
\begin{itemize}
\item[(i)]The potentials satisfying the assumption in Proposition \ref{pro-le}(iii) was considered by Barreira
\cite{Bar96} in the study of the Hausdorff dimension of planar limit
sets.
\item[(ii)]
Let $\C_{aa}(X,T)$ denote the collection of asymptotically additive
potentials on $X$. Define an equivalence relation $\sim$ on
$\C_{aa}(X,T)$ by $\Phi\sim \Psi$ if $\|\Phi-\Psi\|_{\rm lim}=0$,
where
$$
\|\Phi-\Psi\|_{\rm lim}:=\limsup_{n\to \infty}\frac{1}{n}\sup_{x\in
X}|\log \phi_n(x)-\log \psi_n(x)|
$$
for $\Phi=\{\log \phi_n\}_{n=1}^\infty$, $\Psi=\{\log
\psi_n\}_{n=1}^\infty$. Then it is not hard to see that the quotient
space $\C_{aa}(X,T)/\sim$ endowed with the  norm $\|\cdot\|_{\rm
lim}$ is a separable Banach space.

\end{itemize}
\end{re}

\section{Main notation and conventions}
\label{B}
For the reader's convenience, we summarize in Table \ref{table-1} the main notation and typographical conventions used in this paper.
\begin{table}
\centering
\caption{Main notation and conventions}
\vspace{0.05 in}
\begin{footnotesize}
\begin{raggedright}
\begin{tabular}{p{1.5 in} p{4 in} }
\hline \rule{0pt}{3ex}
%& Effect of increasing \\
%Definition & substrate stiffness \\[3pt]
%\hline \rule{0pt}{3ex}
$(X,T)$ & A topological dynamical system (Section \ref{S-1})\\
$\Phi=\{\log \phi_n\}_{n=1}^\infty$ & (Asymptotically sub-additive) potential (Section \ref{S-1})\\
$\overline{\beta}(\Phi)$ & $\overline{\beta}(\Phi)=\lim_{n\to \infty}(1/n)\log
\sup_{x\in X}\phi_n(x)$\\
$\lambda_\Phi(x), \Phi_*(\mu)$ & Lyapunov exponent of $\Phi$ at $x$ (resp. with respect to $\mu$) (Section \ref{S-1})\\
$E_\Phi(\alpha)$ & $\alpha$-level set of $\lambda_\Phi$ (Section \ref{S-1})\\
$\M(X)$ & Set of all  Borel probability measures on $X$ \\
$\M(X,T), {\mathcal E}(X,T)$ & Set of $T$-invariant (resp. ergodic)  Borel probability measures on $X$\\
$h_\mu(T)$ & measure-theoretic entropy of $T$ with respect to $\mu$\\
$P(T,\Phi)$ & Topological pressure of $\Phi$ (Section \ref{S-2.2})\\
$P_\Phi(q)$ & $P(T, q\Phi)$\\
$P_\Phi'(\pm \infty)$ & $\lim_{q\to \infty}P_\Phi(q)/q$, $\lim_{q\to -\infty}P_\Phi(q)/q$\\  
$\I(\Phi,q)$ & Set of equilibrium states of $q\Phi$\\
$\htop(T, Z), \htop(T)$ & Topological entropy of $T$ with respect to $Z$ (resp. $Z=X$) (Section \ref{S-2.1})\\
${\rm conv}(M)$ & Convex hull of $M$ (Section \ref{S-2.3})\\
${\rm ri} (A)$ & Relative interior of a convex set $A$ \\
$f^*$ & Conjugate function of $f$ (Section \ref{S-2.4})\\
${\rm ext}(C), {\rm expo} (C) $ & Set of extreme points (resp. exposed points) of $C$ (Section \ref{S-2.3})\\
$\partial f({\bf x})$, &  Subdifferential of $f$ at ${\bf x}$ (Section \ref{S-2.3})\\
$\partial^e f({\bf x})$ & ${\rm ext} (\partial f({\bf x}))$\\
$\partial f(U)$, $\partial^e f(U)$ & $\bigcup_{{\bf x}\in U} \partial f({\bf x})$, $\bigcup_{{\bf x}\in U} \partial^e f({\bf x})$\\
$V(x)$ & Set of limit points of the sequence $\mu_{x,n}=(1/n) \sum_{j=0}^{n-1}\delta_{T^j x}$ in $\M(X)$ \\
$\R_+$ & $(0,\infty)$\\
${\rm cl}_+(A)$ & (cf. \eqref{e-0330a})\\
${\rm cl}_+^\delta(\partial P_{{\bf \Phi}}(\R_+^k))$ & (cf.  \eqref{e-newno})\\
${\bf \Phi}=(\Phi_1,\ldots, \Phi_k)$ & A family of asymptotically sub-additive potentials \\
$\overline\beta({\bf \Phi})$ & $\overline{\beta}(\sum_{i=1}^k\Phi_i)$ \\
${\bf \Phi}_*(\mu)$ & $ ((\Phi_1)_*(\mu),\ldots, (\Phi_k)_*(\mu))$\\
$P_{\bf \Phi}({\bf q})$ &  $P(T, {\bf q}\cdot {\bf \Phi})$\\
$\I({\bf \Phi}, {\bf q})$ & Set of equilibrium states of ${\bf q}\cdot {\bf \Phi}$\\
$E_{{\bf \Phi}}({\bf a})$ &  (cf. \eqref{e-1.4-h})\\
$G_\mu$ & Set of $\mu$-generic points (see Section \ref{S-5})\\

\hline
\end{tabular}
\label{table-1}
\end{raggedright}
\end{footnotesize}
\end{table}

\noindent {\bf Acknowledgements}  The first author was partially
supported by the direct grant and RGC grants in the Hong Kong
Special Administrative Region, China (projects CUHK400706,
CUHK401008).  The second author  was partially supported by NSFC
(Grant 10531010), 973 project and  FANEDD (Grant 200520). The
authors thank Yongluo Cao and Katrin Gelfert for their valuable
comments. They  also thank the anonymous referee for his helpful comments and suggestions that improved the manuscript.


\begin{thebibliography}{100}
\bibitem{BaMe07} J. Barral and M.  Mensi,  Gibbs measures on self-affine Sierpinski carpets and their singularity spectrum.
{\it Ergod. Th. \& Dynam. Sys.} {\bf 27} (2007),  1419--1443.

\bibitem{BaMe08}
J. Barral, and M.  Mensi,  Multifractal analysis of Birkhoff averages on `self-affine' symbolic spaces.
{\it Nonlinearity} {\bf 21} (2008), 2409--2425.


\bibitem{Bar96}
L.  Barreira,  A non-additive thermodynamic formalism and
applications to dimension theory of hyperbolic dynamical systems.
{\it Ergod. Th. \& Dynam. Sys.} {\bf 16} (1996), no. 5, 871--927.

\bibitem{Bar06}
L. Barreira,
 Nonadditive thermodynamic formalism: equilibrium and Gibbs measures.
{\it Discrete Contin. Dyn. Syst.}, {\bf 16} (2006),  279--305.

\bibitem{Bar-book} L. Barreira, {\it Dimension and recurrence in hyperbolic dynamics}. Progress in Mathematics, {\bf 272}.
Birkh$\ddot{a}$user Verlag, Basel, 2008.

\bibitem{BaGe06}
L. Barreira and K. Gelfert, Multifractal analysis for Lyapunov exponents on nonconformal repellers.
 {\it Comm. Math. Phys.} {\bf 267} (2006), 393--418.


\bibitem{BPS97}L.  Barreira, Ya. Pesin and J. Schmeling,
 On a general concept of multifractality: multifractal spectra for dimensions, entropies, and Lyapunov exponents.
 Multifractal rigidity. {\it Chaos} {\bf 7} (1997),  27--38.

\bibitem{BSS02}L. Barreira, B. Saussol and J. Schmeling,
 Higher-dimensional multifractal analysis. {\it J. Math. Pures Appl.} {\bf 81} (2002), 67--91.

\bibitem{BaSc00} L. Barreira and J. Schmeling,  Sets of ``non-typical" points have
full topological entropy and full Hausdorff dimension, {\it Israel
J. Math.} {\bf 116} (2000), 29--70.

\bibitem{Ben94}  F. Ben Nasr, Analyse multifractale de mesures. {\it C. R.
Acad. Sci. Paris S\'{e}r. I Math.} {\bf 319} (1994), 807--810.

\bibitem{Bow72} R.  Bowen,  Entropy-expansive maps. {\it Trans. Amer. Math. Soc.} {\bf 164} (1972),  323--331.

\bibitem{Bow73}R. Bowen, Topological entropy for noncompact sets.
{\it Trans. Amer. Math. Soc.} {\bf 184} (1973), 125--136.

\bibitem{BMP92}
G. Brown, G. Michon and J. Peyri\`ere, On the multifractal
analysis of measures. {\it J. Statist. Phys.} {\bf 66} (1992),
775--790.

\bibitem{Buz97}
J. Buzzi,
Intrinsic ergodicity of smooth interval maps.
{\it Israel J. Math.} {\bf 100} (1997), 125--161.

\bibitem{CFH08}
Y. L. Cao, D. J. Feng and W. Huang, The thermodynamical formalism
for submultiplicative potentials. {\it Discrete Contin. Dyn. Syst.} {\bf 20} (2008),  639--657.



%\bibitem{CM}R. Cawley and R. D.  Mauldin, Multifractal decompositions of Moran fractals.
%{\it Adv.  Math.} 92 (1992), no. 2, 196--236.

\bibitem{CKS05} E. C. Chen, T. K\"{u}pper and L. Shu,  Topological entropy for divergence points.
{\it Ergod. Th. \& Dynam. Sys.} {\bf 25} (2005), 1173--1208.

\bibitem{Con-book}J. B. Conway, A course in functional analysis.
Springer-Verlag, New York, 1985.


%\bibitem{DoMa} T. Downarowicz, A. Maass, Smooth interval maps have symbolic extensions. To appear
%in {\it Inventiones Mathematicae}.

\bibitem{Fal88} K. J. Falconer, A subadditive thermodynamic formalism for
mixing repellers. {\it J. Phys. A} {\bf 21} (1988), no. 14,
L737--L742.

\bibitem{FaSl09} K. J. Falconer and  A. Sloan,
Continuity of subadditive pressure for self-affine sets. {\it Real
Analysis Exchange} {\bf 34} (2008), no. 2, 413--428.

\bibitem{FaFe00} A. H. Fan and D. J.  Feng,  On the distribution of
long-term time averages on symbolic space. {\it J. Statist. Phys.}
{\bf 99} (2000),  813--856.

\bibitem{FFW01}A. H. Fan, D. J. Feng and J. Wu, Recurrence, dimension and entropy,
{\it J. Lond. Math. Soc.}  {\bf 64} (2001),  229--244.

\bibitem{FLP08}
A. H. Fan, L. M. Liao and J. Peyri\`{e}re,  Generic points in systems of specification and Banach valued Birkhoff ergodic average.
 {\it Discrete Contin. Dyn. Syst.} {\bf 21} (2008),  1103--1128.

%\bibitem{Fen03b}
%D. J. Feng, Smoothness of the $L\sp q$-spectrum of self-similar measures with overlaps.
%{\it J. London Math. Soc. (2)}  {\bf 68}  (2003),  102--118.

\bibitem{Fen03}D. J. Feng,
Lyapunov exponents
for products of  matrices and  multifractal analysis.
Part I: Positive  matrices.
{\it Israel J. Math.} {\bf 138} (2003), 353--376.

\bibitem{Fen08}D. J. Feng,
Lyapunov exponents for products of  matrices and  multifractal
analysis. Part II. General matrices.  {\it Israel J. Math.} {\bf
170} (2009), 355--394.

\bibitem{Fen04} D. J. Feng,  The variational principle for products of non-negative matrices.
{\it Nonlinearity} {\bf 17} (2004),  447--457.


\bibitem{FeLa02}
D. J. Feng and K. S. Lau,
The pressure function for products of non-negative matrices.
{\it Math. Res. Lett.}  {\bf 9}  (2002),  363--378.

\bibitem{FLW02} D. J. Feng, K. S. Lau and J. Wu,
 Ergodic limits on the conformal repeller. {\it Adv. Math.} {\bf 169} (2002)
58--91.
\bibitem{FeOl03} D. J. Feng and  E. Olivier,  Multifractal analysis of the
weak Gibbs measures and phase transition--application to some
Bernoulli convolutions. {\it Ergod. Th. \& Dynam. Sys.}, {\bf 23}
(2003), 1751--1784.

\bibitem{FeSh08}
D. J. Feng and L. Shu, Multifractal analysis for  disintegrations of
Gibbs measures and conditional Birkhoff  averages. {\it Ergod. Th.
\& Dynam. Sys.}, {\bf 29} (2009), 885--918.


\bibitem{HiLe01} J. P Hiriart-Urruty and C. Lemar\'{e}chal,  {\it Fundamentals of convex analysis.}
Springer-Verlag, Berlin, 2001.

% \bibitem{FuKe60}H. Furstenberg and H. Kesten, Products of random matrices,
%{\it Ann. Math. Stat., {\bf 31}} (1960), 457-469.

%\bibitem{Mat95}
%P. Mattila,
%{\it Geometry of sets and measures in Euclidean spaces. Fractals and rectifiability.}
%Cambridge University Press, Cambridge, 1995.





%\bibitem{H} Y. Heurteaux, Estimations de la dimension inf\'erieure
%et de la dimension sup\'erieure des mesures, {\it Ann. Inst. H. Poincar\'e Probab. Statist.}.
%{\bf 34} (1998), 309-338.

\bibitem{Kae04} A. K\"{a}enm\"{a}ki. On natural invariant measures on generalised iterated function systems.
{\it Ann. Acad. Sci. Fenn. Math.}, {\bf 29} (2004), 419--458.

\bibitem{KaHa-book} A. Katok and B. Hasselblatt, {\it
 Introduction to the modern theory of dynamical systems}.  Cambridge University Press, Cambridge, 1995.

\bibitem{Kel-book}
 G. Keller,  Equilibrium states in ergodic theory,  Cambridge University Press,  1998.

\bibitem{Kes01}
 M. Kesseb\"{o}hmer,  Large deviation for weak Gibbs measures and multifractal spectra.
 {\it Nonlinearity} {\bf 14} (2001),  395--409.


\bibitem{KeSt04}
M. Kesseb\"{o}hmer and B. O. Stratmann, A multifractal formalism for
growth rates and applications to geometrically finite Kleinian
groups. {\it Ergod. Th. \& Dynam. Sys.} {\bf 24} (2004),  141--170.



\bibitem{Kre72}W.  Krieger,  On unique ergodicity.
{\it Proceedings of the Sixth Berkeley Symposium on Mathematical Statistics and Probability}
 (Univ. California, Berkeley, Calif., 1970/1971), Vol. II: Probability theory, pp. 327--346,
 Univ. California Press, Berkeley, Calif., 1972.

%\bibitem{L}
%K.-S. Lau, Iterated function systems with overlaps and multifractal structure. Trends in probability and related analysis (Taipei, 1998), 35--76, World Sci. Publishing, River Edge, NJ, 1999.

%\bibitem{LaNg98} K.-S. Lau and S.-M. Ngai,
%$L\sp q$-spectrum of the Bernoulli convolution associated with the golden ratio. {\it Studia Math.} 131 (1998), no. 3, 225--251.

%\bibitem{LaNg99}K.-S. Lau and S.-M. Ngai,
% Multifractal measures and a weak separation condition. {\it Adv. Math.} 141 (1999), no. 1, 45--96.



%\bibitem{N} S.-M. Ngai, A dimension result arising from the $L\sp q$-spectrum of a measure. {\it Proc. Amer. Math. Soc.} 125 (1997), no. 10, 2943--2951.
\bibitem{MaSm00} N. Makarov and S. Smirnov,  On ``thermodynamics'' of rational maps. I. Negative spectrum.
{\it Comm. Math. Phys.} {\bf 211} (2000), 705--743.
\bibitem{MaSm03}N.  Makarov and S. Smirnov,
 On thermodynamics of rational maps. II. Non-recurrent maps. {\it J. London Math. Soc.} {\bf  67} (2003),  417--432.

\bibitem{Mis76} M. Misiurewicz,  Topological conditional entropy. {\it Studia Math.} {\bf 55} (1976), 175--200.

\bibitem{Mum06} A. Mummert,
The thermodynamic formalism for almost-additive sequences.
{\it Discrete Contin. Dyn. Syst.}, {\bf 16} (2006), 435--454.

\bibitem{Oli99} E. Olivier,  Multifractal analysis in symbolic dynamics and distribution of pointwise dimension for $g$-measures.
{\it Nonlinearity} {\bf 12} (1999),  1571--1585.

%\bibitem{O} L. Olsen,  A multifractal formalism. {\it Adv. Math.} 116 (1995), no. 1, 82--196.

\bibitem{Ols03} L. Olsen,
Multifractal analysis of divergence points of deformed measure
theoretical Birkhoff averages. {\it J. Math. Pures Appl.} {\bf 82}
(2003),  1591--1649.

%\bibitem{O1}L. Olsen,
%Dimension inequalities of multifractal Hausdorff measures and multifractal packing measures.
%{\it Math. Scand.} {\bf 86} (2002), 109--129.

%\bibitem{PSS}Y. Peres, W. Schlag and B. Solomyak,  Sixty years of Bernoulli convolutions. Fractal geometry and stochastics, II (Greifswald/Koserow, 1998), 39--65, Progr. Probab., 46, Birkh\"auser, Basel, 2000.

%\bibitem{PS}Y. Peres, and B. Solomyak,  Existence of $L\sp q$ dimensions and entropy dimension for self-conformal measures. {\it Indiana Univ. Math. J.} 49 (2000), no. 4, 1603--1621.

\bibitem{Pes-book}Y. B. Pesin,  {\it Dimension theory in dynamical systems. Contemporary views and applications.} Chicago Lectures in Mathematics. University of Chicago Press, Chicago, IL, 1997.

 \bibitem{PfSu07}C.-E. Pfister, and W. G. Sullivan,  On the topological entropy of saturated sets. {\it Ergod. Th. \& Dynam. Sys.} {\it 27} (2007),  929--956.

\bibitem{Roc70} R. T. Rockafellar, {\it  Convex analysis}.
Princeton University Press, Princeton, N.J. 1970.

\bibitem{Rue-book}D. Ruelle, {\it Thermodynamic formalism}. The mathematical structures of classical equilibrium statistical mechanics.
 Encyclopedia of Mathematics and its Applications, 5. Addison-Wesley Publishing Co., Reading, Mass., 1978.

\bibitem{TaVe03} F. Takens and E. Verbitskiy,  On the variational principle for the topological entropy of certain non-compact sets. {\it Ergod. Th. \& Dynam. Sys.} {\bf 23} (2003), 317--348.

\bibitem{Tes06} B. Testud, Phase transitions for the multifractal analysis of self-similar measures.
{\it Nonlinearity} {\bf 19} (2006), 1201--1217.

\bibitem{Wal-book} P. Walters,
   {\it An introduction to ergodic theory},
         Springer-Verlag, Berlin, Heidelberg, New York, 1982.

%\bibitem{Y}  L.-S Young, Dimension, entropy and Lyapunov exponents.
%{\it Ergod. Th. \& Dynam. Sys.} {\bf 2}(1982), 109-124.

\end{thebibliography}
\end{document}